\newtheorem{theorem}{Theorem}[section]
\newtheorem{lemma}[theorem]{Lemma}
\newtheorem{corollary}[theorem]{Corollary}
\newtheorem{conjecture}[theorem]{Conjecture}
\newtheorem*{example}{Example}
\newtheorem{definition}[theorem]{Definition}
\newtheorem{proposition}[theorem]{Proposition}
\newtheorem{remark}[theorem]{Remark}
\def\mb{\mathbb}
\def\mc{\mathcal}
\def\L{\Lambda}
\def\k{\mb K}
\def\C{\mb C}
\def\Z{\mb Z}
\def\Q{\mb Q}
\def\ph{\varphi}
\def\N{\mc N}
\def\ard{\dashrightarrow}
\def\oL{\ol L}
\def\t{\times}
\def\cl{\colon}
\def\emb{\hookrightarrow}
\def\ol{\overline}
\def\wt{\widetilde}
\renewcommand{\P}{{\mb P}}
\def\wdw{\wedge\dots\wedge}
\def\PP{\mathbb P^1\times\mathbb P^1}
\def\bs{\backslash}
\def\Set{{\mc Set}}
\def\F{{\mc Fields}}
\def\B{{\mc B}}
\def\T{\mc T}
\def\W{\mc W}
\def \tW{\wt{\W}}
\def \wW{\tW}
\def \wL{\L_{\geq m-1}}
\def \CH{\mc{N}}
\def \wCH{\wt{\CH}_{\geq m-1}}
\def \G{\Gamma}
\def \wG{\Gamma_{\geq m-1}}
\def \M{\mc M}
\newcommand{\res}[2]{\left.{#1}\right|_{#2}}
\newcommand{\EP}[2]{{\Lambda^{#1}\k(#2)^\times}}
\newcommand{\SRL}{\mathrm{RecMaps}}
\newcommand{\Hom}{\mathrm{Hom}}
\DeclareMathOperator{\ts}{\partial}
\DeclareMathOperator{\ord}{ord}
\DeclareMathOperator{\mult}{mult}
\DeclareMathOperator{\val}{val}
\DeclareMathOperator{\im}{Im}
\DeclareMathOperator*{\colim}{colim}
\DeclareMathOperator{\spec}{Spec}
\DeclareMathOperator{\dval}{DiscVal}
\DeclareMathOperator{\Div}{Div}
\DeclareMathOperator{\supp}{supp}
\begin{document}
\title{On Goncharov's conjecture in next to Milnor degree}
\author{Vasily Bolbachan\footnote{This paper is an output of a research project implemented as part of the Basic Research Program at the National Research University Higher School of Economics (HSE University). This paper was also supported in part by the contest \textquote{Young Russian Mathematics}.}}

\maketitle


\begin{abstract}
   Let $\mathbb K$ be a field of characteristic zero. We prove that its motivic cohomology in degree $m-1$ and weight $m$ is rationally isomorphic to the cohomology of the polylogarithmic complex. This gives a partial extension of A. Suslin theorem describing the indecomposable $K_3$ of a field.
\end{abstract}

\emph{Keywords:} motivic cohomology, polylogarithmic complex, Goncharov conjecture, higher Chow groups.


\tableofcontents
\section{Introduction}
\label{sec:introduction}
Let $X$ be a smooth variety. S. Bloch  \cite{bloch1986algebraic} defined higher Chow group of $X$. These groups are bigraded and denoted by $CH^m_\Delta(X,n)$. They are defined as $n$-th homology of the complex $Z^m_\Delta(X,*)$, where $Z^m_\Delta(X,n)$ is defined as the group of codimension $m$ algebraic cycles on $n$-dimensional algebraic simplex over $X$ intersecting all the faces properly. Later A. Suslin and V. Voevodsky defined motivic cohomology $H^{j,m}(X,\Z)$ of $X$ and showed that $H^{j,m}(X,\Z)\cong CH^m_{\Delta}(X,2m-j)$ \cite{suslin2000higher, friedlander2002spectral, voevodsky2002motivic}. Let $H^{j,m}(X,\Q)=H^{j,m}(X,\Z)\otimes_\Z \Q$.

In this paper, we concentrate on the case when $X$ is a point. Let $\k$ be a field of characteristic zero. Everywhere we work over $\Q$. For instance, $\Lambda^m K^\times=\Lambda^m (K^\times\otimes_\Z\Q)$. 

In \cite{goncharov1995geometry}, A. Goncharov defined the polylogarithmic complex $\Gamma(\k,m)$. This complex looks as follows:
$$\Gamma(\k,m)\colon \mathcal B_m(\k)\xrightarrow{\delta_m} \mathcal B_{m-1}(\k)\otimes \k^\times\xrightarrow{\delta_m}\dots\xrightarrow{\delta_m}\mathcal B_2(\k)\otimes \Lambda^{m-2}\k^\times\xrightarrow{\delta_m}\Lambda^m \k^\times.$$

This complex is concentrated in degrees $[1,m]$. The group $\mathcal B_m(\k)$ is the quotient of the free abelian group generated by symbols $\{x\}_m, x\in \mathbb P^1(\k)$ by some explicitly defined subgroup $\mathcal R_m(\k)$ (see \cite{goncharov1994polylogarithms}).   The differential is defined as follows: $\delta_m(\{x\}_k\otimes x_{k+1}\wedge \dots \wedge x_m)=\{x\}_{k-1}\otimes x\wedge x_{k+1}\wedge \dots \wedge x_m$ for $k>2$ and $\delta_m(\{x\}_2\otimes x_3\wdw x_m)=x\wedge (1-x)\wedge x_3\wdw x_m$.

 As it was noted in Section 4.2 of \cite{goncharov1994polylogarithms} the group $\mc R_2(\k)$  is generated by the following elements:

$$\sum\limits_{i=1}^5(-1)^i\{c.r.(z_1,\dots, \widehat z_i,\dots, z_5)\}_2, \{0\}_2, \{1\}_2, \{\infty\}_2.$$
  In this formula $z_i$ are five different points on $\mathbb P^1$ and $c.r.(\cdot)$ is \emph{the cross ratio} defined by the formula $$c.r.(a,b,c,d)=\dfrac{(a-c)(b-d)}{(a-d)(b-c)}. $$ A. Goncharov formulated the following conjecture:

  \begin{conjecture}[Goncharov's conjecture]
  \label{conj:Goncharov_K_theory}
      For any field $\k$ there is a functorial isomorphism $H^{j,m}(\spec\k,\Q)\cong H^j(\Gamma(\k,m))$. 
  \end{conjecture}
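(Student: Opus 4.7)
The plan is to construct an explicit chain-level morphism
$$\Gamma(\k,m)\longrightarrow Z^m_\Delta(\spec\k,\,2m-\ast)_\Q$$
and to prove it is a quasi-isomorphism by a double induction: on the weight $m$ and, within each weight, by reverse induction on the cohomological degree $i$. On generators I would send $\{x\}_k\otimes y_{k+1}\wdw y_m$ to the external product of a Bloch--Totaro cycle representing $\{x\}_k$ in weight $k$ with the Milnor-type cycle for $y_{k+1}\wdw y_m$ in weight $m-k$. The first task is to verify that the Goncharov relations generating $\mc R_k(\k)$ lift to explicit boundaries in $Z^m_\Delta(\spec\k,\ast)$; this reduces to exhibiting a concrete chain whose boundary realizes the five-term-type relation in each weight $k$, after which functoriality and $\Q$-linearity extend the construction to all of $\Gamma(\k,m)$.

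For the base cases: in the top degree $i=m$ the Nesterenko--Suslin--Totaro theorem gives $H^{m,m}(\spec\k,\Q)\cong K^M_m(\k)\otimes\Q$, and $\Lambda^m\k^\times/\im(\delta_m)$ coincides with this quotient because $\im(\delta_m)$ is generated by Steinberg symbols via $\{x\}_2\otimes x_3\wdw x_m\mapsto x\wedge(1-x)\wedge x_3\wdw x_m$. The case $i=m-1$ is the main theorem of the present paper. In addition, the low weights $m=2$ (Bloch--Suslin, via $K_3^{\mathrm{ind}}$) and $m=3$ (Goncharov) are fully settled. Granting the conjecture for all weights strictly below $m$ and all degrees strictly above $i$ at weight $m$, the induction step would analyse the kernel and cokernel of the comparison map at degree $i$ using the localization sequence along $\A^1$, Voevodsky's norm-residue isomorphism (to control the Milnor part), and the transfer/rigidity structure on the motivic side to pass between $\spec\k$ and function fields of curves appearing in the cycles representing $\{x\}_k$.

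The main obstacle will be executing the induction step for small $i$. The technique used in this paper at $i=m-1$ exploits codimension-one residues and tame symbols, which linearize the problem; for smaller $i$ one must iterate these residue maps and control contributions from higher-codimension boundary components. The resulting compatibilities are exactly those predicted by the relations $\mc R_k(\k)$ for $k\geq 4$, which are not currently known to exhaust all motivic relations among polylogarithmic symbols. The genuinely new input likely required is an identification of $\Gamma(\k,m)$ with the weight-$m$ piece of a model of the motivic $t$-structure (Bloch--Kriz or Levine), converting the conjecture into a comparison of $t$-structures rather than a chain-level verification. Until such an identification is in hand, the strategy above provides a rigorous argument only in the degrees $i\in\{m-1,m\}$ and in low weights, and extending it to arbitrary $i$ with $1\leq i\leq m-2$ remains the essential open step.
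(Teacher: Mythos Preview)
The statement you were asked to prove is Conjecture~\ref{conj:Goncharov_K_theory}, and the paper does \emph{not} prove it; it is presented as an open conjecture, with the paper's contribution being the special case $i=m-1$ (Theorem~\ref{th:main_first_intro}). So there is no ``paper's own proof'' to compare against, and you have correctly diagnosed this: your final paragraph explicitly concedes that the argument ``provides a rigorous argument only in the degrees $i\in\{m-1,m\}$ and in low weights,'' and that the general case ``remains the essential open step.'' In that sense your proposal is not a proof but an honest assessment of the state of the art, which is the appropriate response.

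A few remarks on the content of your outline. First, the claim that weight $m=3$ is ``fully settled'' overstates what is known: Goncharov's work in weight~$3$ gives deep structural results, but the full comparison in degree $i=1$ is not established in the literature in the clean form your induction would require. Second, your proposed induction scheme has a structural gap even as a strategy: the inductive hypothesis at weight $<m$ and degree $>i$ does not by itself control the comparison at $(m,i)$, because the localization sequence and residue maps feed information \emph{downward} in degree within a fixed weight, not upward from lower weights. The paper's method at $i=m-1$ works precisely because one can model the relevant truncation $\tau_{\geq m-1}$ by an auxiliary complex $\Lambda(\k,m)$ built from curves and tame symbols; there is no known analogue of this for $\tau_{\geq i}$ with $i\leq m-2$, and your invocation of ``iterating residue maps'' hides exactly the difficulty that the relations $\mathcal R_k(\k)$ for $k\geq 3$ are defined only inductively and are not known to be finitely generated by explicit functional equations. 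Your closing suggestion, that the real missing input is a $t$-structure comparison via a Bloch--Kriz or Levine model, is a reasonable summary of where the problem stands.
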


  The conjecture \ref{conj:Goncharov_K_theory} was formulated  30 years ago, however, it was known only in a very few cases: when $j=m$ \cite{totaro_1992_milnork,nesterenko1990homology} and when $j=1, m=2$ \cite{suslin1991k3}. We remark that both of these cases were known before \cite{goncharov1995geometry}. The goal of this paper is to prove the following theorem:

  \begin{theorem}
\label{th:main_first_intro}
      Let $\k$ be a field of characteristic zero and $m\in\Z, m\geq 2$. We have the canonical isomorphism $H^{m-1,m}(\spec\k,\Q)\cong H^{m-1}(\Gamma(\k,m))$.
  \end{theorem}

  We believe that our result can be extended to arbitrary characteristics. However, for simplicity, we deal only with the case of characteristic zero.

  Let us give a more explicit version of this theorem. Denote by $\square^n=(\P^1\bs\{1\})^n$ the algebraic cube of dimension $n$. Subvarieties of $\square^n$ given by the equations of the form $z_i=0,\infty$ are called \emph{faces.} The higher Chow groups are defined as the cohomology of a certain complex $\mc Z_\square^m(\k,*)$. The group $\mc Z_\square^m(\k, n)$ is a quotient of the group of codimension $m$ algebraic cycles on $\square^{n}$ intersecting all faces properly by the subgroup of degenerate cycles \cite{bloch_rriz_1994_mixed}. The differential is given by the signed sum of restrictions to codimension one faces. We use the alternating version $\N(\k,m)_*$ of the complex $\mc Z_\square^m(\k,2m-*)$ which we define in Section \ref{sec:preliminary_def}. 
  
Let $Y$ be a variety and $f_1,\dots, f_n\in \k(Y)^\t$. Define a cycle $[f_1,\dots, f_n]_Y$ on $\square^n$ as follows. Consider a rational map $Y\ard \square^n$ given by $z\mapsto (f_1(z),\dots, f_n(z))$. Denote by $Z$ the closure of its image and by $\psi$ the natural map $Y\ard Z$. If $\dim Z< \dim Y$ we set $[f_1,\dots, f_n]_Y=0$. Otherwise, we set $[f_1,\dots, f_n]_Y=\deg(\psi)[Z]$. When the corresponding cycle intersects faces properly, this cycle gives the canonical element in the complex $\CH(\k,m)_j$, where $m=n-\dim Y, j=n-2\dim Y$. We denote this element by the same symbol.

For a chain complex $C_*$ and $k\in \Z$, denote by $\tau_{\geq k}C_*$ its \emph{canonical truncation}. By definition, if $j>k$ than $(\tau_{\geq k}C_*)_j=C_j$. If $j=k$ than $(\tau_{\geq k}C_*)_j=C_k/(\im d)$ and if $j<k$ than $(\tau_{\geq k}C_*)_j=0$. The differential of $\tau_{\geq k}C_*$ is induced by the differential of $C_*$. Similarly, denote by $\sigma_{\geq k}$ its \emph{stupid truncation}.  By definition, if $j\geq k$ than $(\sigma_{\geq k}C_*)_j=C_j$ and if $j<k$ than $(\sigma_{\geq k}C_*)_j=0$. The differential of $\sigma_{\geq k}C_*$ is induced by the differential of $C_*$.

Define a subcomplex $\M(\k,m)$ \cite{cubical1999herbert} of the complex $\tau_{\geq m-1}\CH(\k, m)$ as follows.  The vector space $\M(\k,m)_{m-1}$ is generated by the elements of the form $[f_1,f_2,\dots, f_{m+1}]_X$, where: $X$ is a complete curve, the divisors of the functions $f_1, f_2$ are disjoint and the functions $f_i,i\geq 3$ are constants. $\M(\k,m)_{m}$ is defined as $d(\M(\k,m)_{m-1})$. One can check that for any $g_1,g_2,c_2,\dots, c_{m}\in\k^\t$, the element
$$[g_1g_2,c_2,\dots, c_m]_{\spec \k}-[g_1,c_2,\dots, c_m]_{\spec \k}-[g_2,c_2,\dots, c_m]_{\spec \k}$$
belongs to $\M(\k,m)_m$.
We will show that $\M(\k,m)$ is acyclic. This essentially follows from Beilinson-Soule vanishing $H^{0,1}(\k,\Q)=0$ which is well-known \cite{bloch1986algebraic},\cite{voevodsky_six_lectures}.

Assume that $\k$ is algebraically closed. Denote by $V$ the $m$-th symmetric power of the complex $\N(\k,1)$. In the beginning of Section \ref{sec:W_isomorphsim} we will establish the following isomorphisms $$\mc M(\k,m)_{m-1}\cong\mathrm{coker}(d\colon V_{m-2}\to V_{m-1}),\quad \M(\k,m)_m\cong \im(d\colon V_{m-1}\to V_m).$$

Denote $\wCH(\k,m)=(\tau_{\geq m-1}\CH(\k,m))/\M(\k,m)$ and $\wG(\k,m)=\tau_{\geq m-1}\G(\k,m)$. Define a morphism of complexes $$\T\colon \wG(\k,m)\to \wCH(\k,m).$$  The chain complexes $\wG(\k,m),\wCH(\k,m)$ both concentrated in degrees $m-1$ and $m$. So, to define $\T$ it is enough to define it on the elements of the form $\{a\}_2\otimes c_3\wdw c_{m}$ and $c_1\wdw c_m$. The element $\{a\}_2\otimes c_3\wdw c_{m}$ goes to 
$[t,(1-t),(1-a/t),c_3,\dots, c_{m}]_{\P^1}.$ The element $c_1\wdw c_m$ goes to
$[c_1,\dots, c_m]_{\spec\k}.$ It follows easily from \cite{cubical1999herbert}, that $\T$ is a well-defined morphism of complexes. We will give a detailed proof in Section \ref{sec:Totaro_map}. Here is a more precise version of our main result:

\begin{theorem}
\label{th:main_second_intro}
    Let $\k$ be a field of characteristic zero and $m\geq 2$. The map $\T$ is a quasi-isomorphism. When $\k$ is algebraically closed, this map is an isomorphism.
\end{theorem}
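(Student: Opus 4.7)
The plan is to verify that $\T$ is a quasi-isomorphism by showing it induces isomorphisms on cohomology in the two degrees $m-1$ and $m$, which are the only degrees where either complex has nonzero cohomology. First I would prove the acyclicity of $\M(\k,m)$ promised in the introduction. Since $\M$ is supported in degrees $m{-}1,m$ with $\M_m=d(\M_{m-1})$, acyclicity reduces to the injectivity of $d$ restricted to $\M_{m-1}$. A direct calculation on the generators $[f_1,f_2,c_3,\dots,c_{m+1}]_X$ with $\Div(f_1)\cap\Div(f_2)=\emptyset$ expresses the kernel in terms of a Weil-reciprocity-style map whose cokernel is controlled by the Beilinson-Soul\'e vanishing $H^{0,1}(\k,\Q)=0$. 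Acyclicity then yields canonical isomorphisms $H^j(\wCH(\k,m))\cong H^{j,m}(\spec\k,\Q)$ for $j=m-1,m$, and the standard vanishing $H^{i,m}(\spec\k,\Q)=0$ for $i>m$ confirms that $\wCH(\k,m)$ is cohomologically concentrated in degrees $m-1,m$.

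In degree $m$ the statement is classical: the component $\L^m\k^\times\xrightarrow{\T}\wCH_m$ sends $c_1\wdw c_m$ to $[c_1,\dots,c_m]_{\spec\k}$, and under the identification $H^{m,m}(\spec\k,\Q)\cong K^M_m(\k)_\Q$ of Totaro and Nesterenko-Suslin this is (up to sign) the Milnor symbol $\{c_1,\dots,c_m\}$, so $\T$ is an isomorphism on $H^m$.

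The heart of the argument is degree $m-1$. A class in $H^{m-1}(\wCH(\k,m))$ is represented by a codimension-$m$ cycle in $\square^{m+1}$ that, after normalization, decomposes into a sum of symbols $[f_1,\dots,f_{m+1}]_X$ for complete smooth curves $X$ with nonconstant rational functions $f_i\in\k(X)^\times$. My plan for surjectivity of $\T$ on $H^{m-1}$ is a three-step normal-form reduction modulo boundaries and modulo $\M(\k,m)$: (i) use the $\M$-relations to arrange that $f_3,\dots,f_{m+1}$ are constants and that $\Div(f_1)$ and $\Div(f_2)$ are disjoint; (ii) apply a Suslin-style moving argument, exploiting transfers and specialization at the generic point of $X$, to replace the underlying curve $X$ by $\P^1$; (iii) on $\P^1$, match the resulting cycle with the explicit template $[t,1-t,1-a/t,c_4,\dots,c_{m+1}]_{\P^1}$ that appears in the image of $\T$. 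For injectivity, one then checks that the relations $\mc R_2(\k)$ defining $\B_2(\k)$—in particular the five-term relation—are exactly the relations imposed by the cycle differential and the $\M$-subcomplex on cycles over $\P^1\t\square^{m-2}$; this matching is finite in character once the generators have been identified.

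The principal obstacle is step (ii): moving a cycle supported on an arbitrary complete smooth curve $X$ to one supported on $\P^1$ at the level of the symbols $[\cdot]_X$. This requires extending the rigidity and norm techniques used by Suslin in the $m=2,\ i=1$ case of $K_3^{\mathrm{ind}}$ and by Totaro-Nesterenko in the diagonal $i=m$ case to the intermediate regime $i=m-1$, while keeping every intermediate step inside the truncation $\tau_{\geq m-1}$. Finally, the stronger statement that $\T$ is an isomorphism (not only a quasi-isomorphism) when $\k$ is algebraically closed should follow by sharpening the reduction: over such $\k$ every divisor on a complete curve splits into $\k$-rational points and every rational function factors into linear factors, so steps (i)--(iii) can be carried out as equalities of elements in the complexes rather than only up to boundaries.
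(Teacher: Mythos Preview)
Your outline has a genuine gap at the very place you flag as the ``principal obstacle,'' and in fact step (i) already fails as stated. The subcomplex $\M(\k,m)_{m-1}$ is \emph{spanned} by symbols $[f_1,f_2,c_3,\dots,c_{m+1}]_X$ with $c_i\in\k^\times$; quotienting by $\M$ does not help you rewrite an arbitrary $[f_1,\dots,f_{m+1}]_X$ with all $f_i$ nonconstant as a combination of such symbols---it only imposes multiplicativity relations among elements that are already of that special shape. So you have no mechanism to reach the normal form in (i), and then (ii)--(iii) never get off the ground. Your injectivity sketch (``the five-term relation is exactly the relation imposed by the cycle differential'') is likewise a hope, not an argument: you need an explicit left inverse to $\T$ on $H^{m-1}$, and you have not produced one.

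The paper's route is quite different and bypasses both difficulties by introducing an intermediate complex $\L(\k,m)$ in which a pair $(Y,a)$ with $a\in\Lambda^n\k(Y)^\times$ makes sense for \emph{arbitrary} rational functions (no admissibility) and multiplicativity $[Y,(g_1g_2)\wedge\cdots]=[Y,g_1\wedge\cdots]+[Y,g_2\wedge\cdots]$ holds on the nose. One factors $\T$ as $\wG\xrightarrow{\T}\wCH\xrightarrow{\wW}\wL$ and proves separately that (a) $\wW$ is an isomorphism for algebraically closed $\k$, via a moving/snc argument and an explicit generators-and-relations description of $\Lambda^n\k(X)^\times$ for curves; and (b) $\T'=\wW\circ\T$ is an isomorphism. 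Surjectivity of $\T'$ comes from showing that $\L(\k,m)_{m-1}/\im(d)$ is generated by classes over varieties of the form $Y\times\P^1$---this is your ``move to $\P^1$'' step, but carried out inside $\L$ where the needed multiplicative manipulations are legal. Injectivity of $\T'$ is the deepest input: one constructs a left inverse $\mc{CD}$ using a \emph{lifted reciprocity map} $\mc H_F\colon\Lambda^{m+1}F^\times\to\Gamma(\k,m)_{m-1}/\im\delta_m$ for every $F\in\F_1$, a higher-weight generalization of Suslin's reciprocity law, together with a two-dimensional Parshin-type vanishing. None of this apparatus appears in your proposal, and there is no indication that a direct attack on $\wCH$ without the intermediate $\L$ can supply it.
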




\subsection{The idea of the proof}
According to \cite{rudenko2021strong,bloch1986algebraic} on both complexes there is a norm map with good properties. This reduces the general case to the case when $\k$ is algebraically closed. 

To prove the main result we will construct some intermediate complex $\L(\k,m)$ and a morphism of complexes $W\cl \N(\k,m)\to \L(\k,m)$. Let $\wL(\k,m)$ be $\tau_{\geq m-1}\L(\k,m)$. Denote by $\wt{\mc W}$ the map $\wCH(\k,m)\to \wL(\k,m)$ induced by $\mc W$. Let $\mc T'=\wt{\mc W}\circ\mc T$. We will show that all maps in the following diagram are isomorphisms:

\begin{equation}
    \label{diagram:main_triangle}
        \begin{tikzcd}
             & \wL(\k,m)\ar[dl,leftarrow,"\T'"'] \ar[dr,leftarrow,"\tW"] &\\
             \wG(\k,m)\ar[rr,"\T"] &   & \wCH(\k,m)
        \end{tikzcd}
        \end{equation}

Let $X$ be a proper curve and $g_1,g_2,f_2\dots,f_{m+1}$ are rational functions on $X$. Let $g_3=g_1g_2$. It was stated in \cite{cubical1999herbert} and we will prove in Subsection 4.1, that if for any $i$ the cycle $[g_i,f_2,\dots, f_{m+1}]_X$ intersects faces properly, we get the following relation in the group $\wCH(\k,m)$:
\begin{equation}
\label{formula:multiplicative_relation}
    [g_3,f_2,\dots, f_{m+1}]_X=[g_1,f_2,\dots, f_{m+1}]_X+[g_2,f_2,\dots, f_{m+1}]_X.
\end{equation}

However, when these cycles are not admissible, this relation does not make any sense. This suggests that there is some chain complex (we call it $\L(\k,m)$) with the following properties:
\begin{enumerate}
    \item The complex $\L(\k,m)$ is a well-defined chain complex concentrated in degrees $m-2,m-1$ and $m$.
    \item Let $Y$ be a variety of dimension $\leq 2$. Any sequence $f_1,\dots, f_n$ of non-zero rational functions on $Y$ gives a well-defined element in $\L(\k,m)_{2m-n}$, where $m=n-\dim Y$. Call it $\omega(Y;f_1,\dots, f_n)$. The assignment $(f_1,\dots, f_n)\mapsto \omega(Y;f_1,\dots, f_n)$ gives a well-defined homomorphism
    $$\k(Y)^\times\wedge\dots\wedge \k(Y)^\times\to \L(\k,m)_{2m-n}.$$
    \item When the cycle $[f_1,\dots, f_n]_Y$ intersects faces properly, the differential of $\omega(Y;f_1,\dots, f_n)$ is closely related to the differential of the cycle $[f_1,\dots, f_n]_Y$ in $\mc N(\k,m)$.
\end{enumerate}

 Let us make this idea precise. We recall that \emph{an alteration} is a proper surjective morphism which is generically finite \cite[Introduction]{de1996smoothness}.

    \begin{definition} 
    Let $j\in \{m-2,m-1,m\}.$ Denote by $\wt\L(\k, m)_j$  a vector space freely generated by the isomorphism classes of the pairs $(Y, a)$, where: $Y$ is a complete variety over $\k$ of dimension $m-j$ and $a\in\Lambda^{2m-j}(\k(Y)^\t)$. Denote by $[Y,a]\in\wt\L(\k,m)_j$ the corresponding element.
Denote by $\L(\k, m)_j$ a quotient of $\wt\L(\k, m)_j$ by the following relations:
$$[\wt Y,\ph^*(a)]=(\deg\ph)[Y,a].$$
$$[Y,a+b]=[Y, a] + [Y, b].$$

In this formula $\ph\colon \wt Y\to Y$ is any alteration. The map $\ph^*$ is defined by the formula
$\ph^*(\alpha_1\wdw \alpha_n)=\ph^*(\alpha_1)\wdw \ph^*(\alpha_n)$. We remark that we write the group law in the group $\k(Y)^\t$ additively. 
\end{definition}

We recall that we assume that $\k$ is algebraically closed. In the case $j=m$ we get $\Lambda(\k,m)_m=\Lambda^m\k^\times$. Denote by $\Lambda(
\k,m
)$ the following cochain complex:
$$\Lambda(\k,m)\colon\quad \Lambda(\k,m)_{m-2}\to \Lambda(\k,m)_{m-1}\to \Lambda^m\k^\times.$$

The differential is given by a so-called tame symbol map defined by A. Goncharov \cite{goncharov1995geometry}.
In Section \ref{sec:Lambda_properties} we will give a precise definition of this complex and will show that it is well-defined.  Besides, we will construct a morphism of complexes $$\W\colon \sigma_{\geq m-2}\CH(\k,m)\to \L(\k,m),$$
where $\sigma_{\geq m-2}$ is the stupid truncation. The cycle $[f_1,\dots, f_n]_Y$ corresponds to $[\overline Y, f_1\wdw f_n]$ where $\ol Y$ is the closure of $Y$ in $(\P^1)^{2m-j}$.

Let us emphasize that there are two main differences between $\N(\k,m)$ and $\L(\k,m)$:
\begin{enumerate}
    \item Let $Y$ be a proper variety. For arbitrary non-zero functions $f_1,\dots, f_n$ on $Y$, the cycle $[f_1,\dots, f_n]_Y$, as a cycle on $\square^n$, may not intersect faces properly. In this case, this cycle does not give any element in the Bloch complex.  However, the element $[Y,f_1\wdw f_n]$ is a well-defined element in the complex $\L(\k,n-2\dim Y)_{n-\dim Y}$.
    \item For arbitrary non-zero rational functions $g_1,g_2,f_2,\dots, f_n$ we have
    $$[Y, (g_1g_2)\wedge f_2\wdw f_n]=[Y, g_1\wedge f_2\wdw f_n]+[Y, g_2\wedge f_2\wdw f_n].$$
    The corresponding identity in $\N(\k,m)$, if true, is satisfied only up to a coboundary.
\end{enumerate}



\begin{theorem}
\label{th:W_isomorphism_intro}
    Let $\k$ be an algebraically closed field of characteristic zero. The map $\wt {\mc W}$ induces the canonical isomorphism $$\wt{\mc N}_{\geq m-1}(\k,m)\to \Lambda_{\geq m-1}(\k,m).$$
\end{theorem}

For a divisor $D=\sum\limits_{\alpha}n_\alpha[D_\alpha]$ on $X$, denote by $\supp(D)$ \emph{its support} defined by the formula $\supp D=\sum\limits_{\alpha} [D_\alpha]$. For some number of divisors $D_1,\dots, D_n$ define $$\supp(D_1,\dots, D_n)=\supp(\supp(D_1)+\dots+\supp(D_n)).$$

Theorem \ref{th:W_isomorphism_intro} will be proved in Section \ref{sec:W_isomorphsim}. The idea is contained in the following statement:

\begin{proposition}
\label{pr:weakly_adm_is_str_adm_intro}
    Let $X$ be a smooth variety. Let $f_1,\dots, f_n\in\k(X)$. Assume that $$\supp((f_1),\dots, (f_n))$$ is a snc divisor. Then $f_1\wdw f_n$ is a linear combination of elements of the form $g_1\wdw g_n$ such that 1) $\supp((g_1),\dots, (g_n))$ is a snc divisor and 2) the divisors $(g_1),\dots, (g_n)$ have no common components.
\end{proposition}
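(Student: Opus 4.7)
Let $D=D_1+\cdots+D_r$ be the irreducible decomposition of $\supp((f_1),\dots,(f_n))$ and write $a_{ij}=\ord_{D_j}(f_i)\in\Z$, so that $(f_i)=\sum_j a_{ij}D_j$. A common component of $(f_1),\dots,(f_n)$ is exactly an index $j$ for which column $j$ of the matrix $A=(a_{ij})$ has no zero entry. The strategy is to express each $f_i$ as a ``unit'' times a product of elementary functions whose divisors are each supported on a single component of $D$, and then to expand $f_1\wdw f_n$ multilinearly.

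By weak approximation for the discrete valuations $v_{D_j}$ of $\k(X)$, I can choose for each $j$ a function $t_j\in\k(X)^\times$ with $\ord_{D_k}(t_j)=\delta_{jk}$, so $(t_j)=D_j+\tilde E_j$ with $\tilde E_j$ supported outside $D$. Within the coset of $t_j$ modulo $\ker(\ord\colon\k(X)^\times\to\Z^r)$ I still have the freedom to multiply by any function regular and invertible at every $D_k$; a Bertini-style genericity argument then lets me arrange that the total support $D+\sum_j\tilde E_j$ is again snc. Setting $u_i=f_i\big/\prod_j t_j^{a_{ij}}$, so $(u_i)=-\sum_j a_{ij}\tilde E_j$, and expanding in the wedge (writing $\k(X)^\times$ additively) yields
\[
f_1\wdw f_n=\sum_{\substack{S\subseteq[n]\\\phi\colon[n]\setminus S\hookrightarrow[r]}}\Bigl(\prod_{i\notin S}a_{i,\phi(i)}\Bigr)\bigwedge_i x_i^{(S,\phi)},\qquad x_i^{(S,\phi)}=\begin{cases}u_i,&i\in S,\\ t_{\phi(i)},&i\notin S,\end{cases}
\]
where contributions from non-injective $\phi$ vanish because of repeated factors in the wedge.

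Each summand has divisor support inside the snc divisor $D+\sum_j\tilde E_j$, giving condition (1). For condition (2), a case analysis on $|[n]\setminus S|$ shows: each $D_j$ appears in the divisor of at most one row of a given summand (since $\phi$ is injective), so no $D_j$ is common; and an auxiliary $\tilde E_j$ can be common only in summands with $|[n]\setminus S|\le 1$, namely in the all-$u$ term $\bigwedge_i u_i$ (where $\tilde E_j$ is common exactly when column $j$ of $A$ has no zero entry) and in the one-$t$ terms (where $\tilde E_{\phi(i_0)}$ is common when column $\phi(i_0)$ of $A$ is ``almost all-nonzero'').

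The main obstacle is dispatching these residual bad summands. The plan is to apply the proposition recursively to them, now working with the snc divisor $\sum_j\tilde E_j$ in place of $D$, and the hard part is exhibiting a strictly decreasing invariant that forces termination: each iteration merely relabels the common-component problem from the $D_j$'s to the $\tilde E_j$'s, and the naive count of bad columns is preserved. Termination must be engineered by exploiting the Picard-class flexibility in the choice of the $t_j$'s---adjusting within $\ker(\ord)$-cosets so that after finitely many iterations the auxiliary divisors become principal, at which point the final all-$u$ terms collapse to wedges of divisor-free units. Setting up this controlled descent, reconciling the snc combinatorics with the cohomological data of $X$, is the technical crux of the argument.
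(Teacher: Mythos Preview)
Your recursion does not terminate, and the proposed termination mechanism is not an argument. After one pass the all-$u$ term $u_1\wdw u_n$ has $(u_i)=-\sum_j a_{ij}\tilde E_j$, so the matrix of orders with respect to the new support $\sum_j\tilde E_j$ is exactly $-A$: every column that was bad before is bad again. You acknowledge this, but the fix you sketch---adjust the $t_j$ within $\ker(\ord)$-cosets so that ``after finitely many iterations the auxiliary divisors become principal''---has no content. On a projective variety the classes $[D_j]\in\mathrm{Pic}(X)$ are what they are; there is no mechanism for iterated moving to kill them, and if some $\tilde E_j$ did happen to be principal the corresponding $u_i$'s would be global units (constants), making the wedge zero rather than giving a decomposition of the original element. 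So the ``technical crux'' you defer is in fact the entire problem.

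The paper's argument avoids this circularity by never trying to clear a whole column at once. It runs two inductions on explicit integer invariants. First, with $a=\sum_{D\ne D'}|J_D\cap J_{D'}|$ (where $J_D=\{i:\ord_D f_i\ne 0\}$ when this set has size $\ge 2$), one picks a single bad component $D_1$, chooses via a Bertini moving lemma a function $h$ with $\ord_{D_1}h=1$ and $(h)$ generic relative to the rest of the support, and replaces $f_1$ by $f_1h^{-\ord_{D_1}f_1}$ and $h^{\ord_{D_1}f_1}$; this strictly drops $a$. Second, once the $J_D$ are disjoint, one uses the identity
\[
f_1\wedge f_2\wedge\cdots=\tfrac12\bigl((f_1/h)\wedge f_2\wedge\cdots+f_1\wedge(f_2/h)\wedge\cdots+(f_1/f_2)\wedge h\wedge\cdots\bigr)
\]
with $h$ again chosen generically, which strictly decreases $\sum_D\sum_{i<j}|\ord_D f_i||\ord_D f_j|$. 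The key point your approach misses is that the extra components introduced by $h$ land in only \emph{one} slot of each new wedge, so they create no new common components; the three-term identity is what makes the invariant genuinely drop. (Note also that the moving lemma needs $X$ projective, which the paper assumes in the actual proof.)
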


\begin{example}
    Let $X=\P^1$ and $f_1=x-a, f_2=x-b, a,b\in \k, a\ne b$. The divisors $(f_1), (f_2)$ have a common point, namely $\infty$. Take $c\in \k, c\not\in\{a,b\}$. We can write:
    $$(x-a)\wedge (x-b)=1/2\left(\dfrac{x-a}{x-c}\wedge (x-b)+(x-a)\wedge \dfrac{x-b}{x-c}+\dfrac{x-a}{x-b}\wedge (x-c)\right).$$
    It is easy to see that, for example, the divisors of the functions $(x-a)/(x-c), (x-b)$ have no common components. This example was discovered by Eric Wofsey \cite{4728781} as an answer to the author's question on the site math.stackexchange. 
\end{example}

This proposition easily implies that the map $\wW$ is surjective. The injectivity will be proved in Section \ref{sec:W_isomorphsim}.

As it was stated above, the map $\tW$ is an isomorphism. It remains to prove the following: 

\begin{theorem}
\label{th:Totaro_isomorphism_intro}
    Let $\k$ be algebraically closed. The map $\T'$ is an isomorphism.
\end{theorem}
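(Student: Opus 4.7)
Since $\wG(\k,m)$ and $\wL(\k,m)$ are concentrated in cohomological degrees $m-1$ and $m$, proving $\T'$ is an isomorphism reduces to verifying it on $H^{m-1}$ and $H^{m}$. Observe first that over an algebraically closed $\k$ every $0$-dimensional complete variety is a disjoint union of copies of $\spec\k$, hence $\wL(\k,m)_{m}=\Lambda^{m}(\k^{\times})=\wG(\k,m)_{m}$ and $\T'$ is literally the identity in top degree. So $H^{m}(\T')$ being an iso amounts to the equality $\im(\delta_m)=\im(\partial)$ as subgroups of $\Lambda^{m}\k^{\times}$. Both are generated by the Steinberg symbols $a\wedge(1-a)\wedge c_{3}\wdw c_{m}$: for $\delta_m$ by definition, and for $\partial$ via the classical description of Milnor $K$-theory as $\Lambda^{m}\k^{\times}$ modulo tame symbols from functions on $\P^{1}$ (Milnor--Bass--Tate together with Weil reciprocity).

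For surjectivity of $H^{m-1}(\T')$, take a closed class $\xi\in\wL(\k,m)_{m-1}$ and use Proposition \ref{pr:weakly_adm_is_str_adm_intro} to represent it as a sum of terms $[Y_{i},f_{1}^{(i)}\wdw f_{m+1}^{(i)}]$ whose divisors $(f_{j}^{(i)})$ are pairwise disjoint on each smooth complete curve $Y_{i}$. After pulling back along alterations (using $[\wt Y,\ph^{*}a]=(\deg\ph)[Y,a]$) and separating out the purely-constant tuples (which contribute only through the top-degree calculation above), bi-additivity of the wedge reduces each remaining summand to a combination of basic expressions of the form $[\P^{1},(t-a)\wedge(t-b)\wedge c_{3}\wdw c_{m+1}]$. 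The identity exhibited in the example following Proposition \ref{pr:weakly_adm_is_str_adm_intro} eliminates the common component at $\infty$, and the M\"obius substitution $s=(t-a)/(t-b)$ then rewrites each basic term as $\T'(\{z\}_{2}\otimes c_{3}\wdw c_{m+1})$ modulo a $\delta_m$-boundary, for a suitable cross-ratio $z$. Summing over the decomposition produces the desired preimage in $\wG(\k,m)_{m-1}$.

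For injectivity of $H^{m-1}(\T')$, suppose $\eta=\sum_{i}n_{i}\{a_{i}\}_{2}\otimes c_{3}^{(i)}\wdw c_{m}^{(i)}\in\ker\delta_m$ is sent to a $\partial$-boundary in $\wL$. Any such witness lies in $\L(\k,m)_{m-2}$ and so has the form $[S,g_{1}\wdw g_{m+2}]$ for a complete surface $S$. Degenerating $S$ along a generic pencil of curves and applying the tame symbol fiberwise expresses the vanishing of $\T'(\eta)$ as a combination of $5$-term relations in $\B_{2}(\k)$ tensored against $\Lambda^{m-2}\k^{\times}$, which is the surface-level geometric avatar of the defining relations $\mc R_{2}(\k)$ of $\B_{2}(\k)$. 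Hence $\eta$ already vanishes in $H^{m-1}(\wG(\k,m))$.

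The main obstacle is the surjectivity step: the explicit preimage produced by the cross-ratio substitution depends on auxiliary choices (the extra point $c$ in the Wofsey identity, the alteration used to reduce to $\P^{1}$, and the factorization of the $f_{j}^{(i)}$), and one must verify that all such ambiguities land in $\mc R_{2}(\k)\otimes\Lambda^{m-2}\k^{\times}+\im\delta_m$ so that the preimage descends to a well-defined class in $H^{m-1}(\wG)$. This compatibility check is exactly the place where Suslin's theorem in the case $m=2$ enters as an input to the proof.
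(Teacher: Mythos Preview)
Your outline has the right shape but contains two genuine gaps, and it proves less than the paper claims: the theorem asserts that $\T'$ is an isomorphism \emph{of complexes}, not merely a quasi-isomorphism, and the paper establishes this by producing an explicit left inverse.

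\textbf{Surjectivity.} Your reduction to $\P^1$ does not work as written. The alteration relation $[\wt Y,\ph^*a]=(\deg\ph)[Y,a]$ lets you replace $Y$ by something \emph{over} $Y$, not by $\P^1$; there is no pushforward in $\L(\k,m)$. Making the divisors of $f_1,\dots,f_{m+1}$ disjoint on a curve $Y$ (Proposition~\ref{pr:weakly_adm_is_str_adm_intro}) does not by itself transport the class to $\P^1$. The paper's route is different and essential: Proposition~\ref{pr_intro:Lambda_is_generated_by_rationally_connected} (proved as Theorem~\ref{th:Lambda_is_generated_by_rationally_connected}) shows that $\L(\k,m)_j/\im(d)$ is generated by classes of the form $[X\times\P^1,w]$, via a hypersurface-in-$(\P^1)^{p+1}$ argument together with the fact that exceptional divisors of blow-ups are $\P^1$-bundles. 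In degree $m-1$ this yields generation by $[\P^1,\,\cdot\,]$, and only then does the induction you sketch (rewriting products of linear factors as Totaro cycles, Proposition~\ref{prop:Lambda_generated_by_Totaro_cycles}) apply.

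\textbf{Injectivity.} ``Degenerating $S$ along a generic pencil and reading off five-term relations'' is not a proof; nothing in that heuristic pins down why the relation you obtain lies in $\mc R_2(\k)\otimes\Lambda^{m-2}\k^\times$ rather than in some larger group, nor why the sum over the pencil converges to the given $\eta$. The paper instead constructs a map $\mc{CD}\colon\wL(\k,m)\to\wG(\k,m)$ by sending $[X,a]\mapsto -\mc H_{\k(X)}(a)$, where $\mc H_F$ is the \emph{lifted reciprocity map} built in Section~\ref{sec:strong_suslin_reciprocity_law}. That $\mc{CD}$ is well-defined on $\wL(\k,m)_{m-1}=\L(\k,m)_{m-1}/\im(d)$ is exactly Theorem~\ref{th:two_dimensional_reciprocity_law} (a two-dimensional reciprocity law for the $\mc H$'s), and $\mc{CD}\circ\T'=\mathrm{id}$ follows from Corollary~\ref{cor:SRL_on_Totaro}. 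This is not a reduction to Suslin's $m=2$ theorem; it is a generalization of the strong Suslin reciprocity law to arbitrary $m$, and it is where most of the work lies.
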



This theorem will be proved in Section \ref{sec:Lambda_and_polylogarithms}. The injectivity of the map $\T'$ will follow from some generalization of the main result of \cite{bolbachan_2023_chow} which we will prove in Section \ref{sec:strong_suslin_reciprocity_law}. The fact that $\mc T'$ is surjective easily follows from the following proposition:

\begin{proposition}
\label{pr_intro:Lambda_is_generated_by_rationally_connected}
    Let $\k$ be algebraically closed and $m\in\Z, m\geq 2$. The group $\L(\k,m)_{m-1}/\im(d)$ is generated by the elements of the form $[\P^1, a]$, where  $a\in\L^{m+1}\k(\P^1)^\t$.
\end{proposition}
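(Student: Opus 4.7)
The plan is to prove the statement by induction on $\dim Y = m - j$, reducing each generator $[Y, a] \in \L(\k, m)_j/\im d$ to a sum of elements $[Z \times \P^1, c]$ with $\dim Z < \dim Y$. Since $[Z \times \P^1, c]$ requires $\dim(Z \times \P^1) = m - j$, the substantive content is for $\dim Y \geq 1$; when $\dim Y = 0$ the claim reads that $\L(\k,m)_m/\im d$ is generated by the empty set and should be treated as a separate base case via reciprocity.

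First I would reduce to the case of $Y$ smooth projective: by resolution of singularities and the alteration relation $[\wt Y, \ph^*(a)] = (\deg \ph)[Y,a]$, we may replace $Y$ with any smooth projective alteration. Next, by multilinearity in $a$ together with Proposition \ref{pr:weakly_adm_is_str_adm_intro}, I may assume the divisors of the components of $a$ are in snc position with no shared irreducible components.

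The central construction is to embed $Y$ as a prime divisor in a smooth projective variety $S$ of dimension $d+1$ (where $d = \dim Y$) chosen so that $S$ contains prime divisors of the form $Z_i \times \P^1$ and such that $[Y] \in \mathrm{Pic}(S)$ admits a presentation $[Y] \sim \sum_i n_i [Z_i \times \P^1]$ modulo principal divisors. Natural candidates are the trivial $\P^1$-bundle $Y \times \P^1$, a projective bundle $\P(\mathcal O \oplus L)$ for a suitable line bundle $L$, or blow-ups of these. I would then lift the components $f_1, \dots, f_{2m-j}$ of $a$ to rational functions $\wt f_1, \dots, \wt f_{2m-j}$ on $S$, choose a rational function $h$ on $S$ with $(h) = [Y] - \sum_i n_i [Z_i \times \P^1]$, and consider $\alpha = \wt f_1 \wedge \dots \wedge \wt f_{2m-j} \wedge h$. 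Computing $d[S, \alpha]$ via the Goncharov tame symbol: the contribution from $Y$ is $[Y, a]$ (as $h$ has order $1$ there and the $\wt f_i$ restrict to $f_i$), the contributions from each $Z_i \times \P^1$ are of the desired form $[Z_i \times \P^1, c_i]$, and the remaining contributions come from spurious divisors introduced by the lift.

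The hard part will be controlling these spurious contributions. In principle the $\wt f_i$ can have zeros and poles on divisors not among $Y$ or the $Z_i \times \P^1$, and those divisors can themselves have dimension $d$ and fail to be $\P^1$-products, preventing termination of a naive induction on $\dim Y$. I expect the cleanest resolution is to use Proposition \ref{pr:weakly_adm_is_str_adm_intro} aggressively when constructing the lift, rewriting $\wt f_1 \wedge \dots \wedge \wt f_{2m-j} \wedge h$ as a combination of pure wedge terms whose divisors are supported only on $Y \cup \bigcup_i (Z_i \times \P^1)$. This reduces the proposition to the geometric claim that every smooth projective $Y$ admits an embedding in some $S$ with the required snc structure, which should follow from Bertini-type arguments over an algebraically closed field of characteristic zero.
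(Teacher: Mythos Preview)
Your proposal has genuine gaps at precisely the step you flag as hard. The induction on $\dim Y=m-j$ does no work, since for fixed $j$ every generator and every spurious divisor appearing in $d[S,\alpha]$ has the same dimension $m-j$; there is no smaller case to invoke. Your candidate ambient varieties also cannot meet the linear-equivalence constraint: in any $\P^1$-bundle over $Y$ (including $Y\times\P^1$ and $\P(\mathcal O\oplus L)$) the class of a section and the fiber classes are independent in the Picard group, so a section is never a $\Z$-combination of divisors of the form $Z\times\P^1$, and blowing up only adds further independent classes. Finally, Proposition~\ref{pr:weakly_adm_is_str_adm_intro} only arranges that the factors in each pure wedge have no \emph{common} divisorial component; it cannot force the support of the $\wt f_i$ into a prescribed finite set of prime divisors, which would confine them to the finitely generated group $\mathcal O(U)^\times/\k^\times$ for $U$ the complement --- far too small to realize an arbitrary lift.

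The paper supplies the missing organizing principle. It takes $S=(\P^1)^{p+1}$ with $p=m-j$, realizes $Y$ birationally as the hypersurface $Y'=\{P=0\}$ with $P$ monic of some degree $s$ in the first coordinate $x_1$, and lifts each $\alpha_j$ to a polynomial $Q_j$ of degree $\leq s-1$ in $x_1$. The induction is on $s$, the minimal degree of a dominant rational map $Y\dashrightarrow(\P^1)^p$, not on $\dim Y$. In $d\bigl[(\P^1)^{p+1},\,P\wedge Q_1\wdw Q_n\bigr]$ the residue along $Y'$ recovers $[Y,a]$; every other prime divisor in the support of $P$ or the $Q_j$ is either pulled back from $(\P^1)^p$ (hence already of the form $D'\times\P^1$) or projects to $(\P^1)^p$ with degree $\leq s-1$ (hence handled by the inductive hypothesis); and the exceptional divisors of the resolution, being blow-ups in smooth centers, are automatically birational to $S'\times\P^1$. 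This degree filtration is exactly what makes the spurious contributions tractable.
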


\subsection{The structure of the paper} In Section \ref{sec:preliminary_def} we state our main definitions. In Section \ref{sec:Lambda_properties} we will define the complex $\L(\k,m)$ and will study its properties. In Section \ref{sec:W_isomorphsim} we will prove that the map $\wW$ is an isomorphism (for algebraically closed $\k$). In Section \ref{sec:strong_suslin_reciprocity_law} we will prove some generalization of the main result of \cite{bolbachan_2023_chow}. Finally, in Section \ref{sec:Lambda_and_polylogarithms} we will prove our main result.

\subsection{Acknowledgements}
The author is grateful to A.Levin, D. Rudenko, E. Wofsey for useful discussions. 

\section{List of notations}
\begin{itemize}
\item $m$ -- the motivic weight
\item $j$ -- the degree in motivic cohomology
\item $n$  -- index in $K$-theory. We have $$K_n^{(m)}(\k)\cong CH^m(\k,n)\cong H^{j,m}(\k,\Q).$$
\item $B_2(F)$ -- the second Bloch group of the field $F$.
\item $\tau_{\geq m-1},\sigma_{\geq m-2}$ -- the canonical and the stupid truncation, respectively.
    \item $\Gamma(F,m)$ -- weight $m$ polylogarithmic complex of the field $F$.
    \item $\ts_D$ -- tame-symbol map $\Lambda^n\k(X)\to \Lambda^{n-1}\k(D)^\times$ defined in the beginning of Section \ref{sec:preliminary_def}.
    \item $\mc Z_\square^m(\k,*)$ -- the chain complex calculating cubical higher Chow groups. 
    \item $\mc N(\k,m)$ -- the alternating version of the complex $\mc Z_\square^m(\k,2m-*)$.
        
    \item $\Gamma_{\geq m-1}(F,m), \mc N_{\geq m-1}(\k,m)$ -- the canonical truncation of the complexes $\Gamma(F,m)$ and $\mc N(\k,m)$
    \item $\mc M(\k,m)$ -- a subcomplex of $\mc N_{\geq m-1}(\k,m)$ defined in the beginning of Section \ref{sec:W_isomorphsim}.
    \item $\wt {\mc N}_{\geq m-1}(\k,m)$ -- the quotient complex $\mc N_{\geq m-1}(\k,m)/\mc M(\k,m)$
\item $\Lambda(\k,m)$ -- the chain complex defined in Section \ref{sec:Lambda_properties}.
    \item $\Lambda_{\geq m-1}(\k,m)$ -- its canonical truncation
    \item $\mc W$ -- the natural $\sigma_{\geq m-2}\mc N(\k,m)\to \L(\k,m)$ defined in Section \ref{subsec:lambda_higher_chow_groups}. In this formula $\sigma_{\geq m-2}$ is the stupid truncation.

\item $\wt{\mc W}$ -- the corresponding map $\wt {\mc N}_{\geq m-1}(\k,m)\to \Lambda_{\geq m-1}(\k,m)$.
\item $\mc T$ -- the natural map $\Gamma_{\geq m-1}(\k,m)\to \wt {\mc N}_{\geq m-1}(\k,m)$ defined before statement of Theorem \ref{th:main_first_intro}.
\item $\mc T'$ -- the composition $\wt{\mc W}\circ \mc T$. See Section \ref{subsec:map:Totaro_prime} for an alternative definition.
    \item $[X,a]$ -- an element of the group $\L(\k,m)_j$. In this formula $X$ is a proper $m-j$ dimensional variety and $a\in\Lambda^{2m-j}\k(X)^\times$.
    \item $[f_1,\dots, f_n]_X$ -- an algebraic cycle on $(\mb P^1\bs\{1\})^n$ parametrized by $X$ via the map $$z\mapsto (f_1(z),\dots, f_n(z)).$$ If this cycle intersects faces properly, we denote by the same symbol the corresponding element in the groups $\mc Z_\square^m(\k,2m-j)$ and $\mc N(\k,m)_j$.
    
\end{itemize}

\section{Preliminary definitions and
lemmas}
\label{sec:preliminary_def}
We recall that we work over $\mathbb Q$. Throughout the article we assume that the field $\k$ has characteristic zero. \emph{An alteration} $\ph\colon \wt Y\to Y$ is a proper surjective morphism between integral schemes which is generically finite. 

Let $(F,\nu)$ be a discrete valuation field. Denote $\mc O_\nu=\{x\in F|\nu(x)\geq 0\}$ the valuation ring, $m_\nu=\{x\in F|\nu(x)>0\}$  the maximal ideal and $\overline F_\nu =\mc O_\nu/m_\nu$ the residue field. We recall that an element $a\in F^\t$ is called \emph{a uniformiser} if $\nu(a)=1$ and \emph{a unit} if $\nu(a)=0$. For $u\in \mc O_\nu$ denote by $\overline u$ its residue class in $\ol F_\nu$.

The proof of the following proposition can be found in \cite{goncharov1995geometry}:

\begin{proposition}
\label{prop:tame_symbol_classiacal}
Let $(F,\nu)$ be a discrete valuation field and $n\geq 1$. There is a unique map $\ts_\nu\colon \L^n(F^\t)\to \L^{n-1}\ol F_\nu^\t$ satisfying the following conditions:

\begin{enumerate}
    \item When $n=1$ we have $\ts_{\nu}(a)=\nu(a)$.
   \item For any units $u_2,\dots, u_n\in F$ we have $\ts_\nu(x\wedge u_2\wdw u_n)=\nu(x)\cdot( \ol{u_2} \dots \wedge \overline{u_n})$.

\end{enumerate}
\end{proposition}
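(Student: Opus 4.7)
The strategy is to prove uniqueness by reducing every element of $\L^n(F^\t)$ to a normal form using a chosen uniformiser, and to prove existence by taking the resulting formula as a definition and verifying it is independent of that choice.

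For uniqueness, fix a uniformiser $\pi$ and write each $a\in F^\t$ as $a=\pi^{\nu(a)}u(a)$ with $u(a):=a\pi^{-\nu(a)}\in \mc O_\nu^\times$; in the additive notation of $\L^n(F^\t)$ this reads $a=\nu(a)\pi+u(a)$. Expanding $a_1\wdw a_n$ multilinearly and using $\pi\wedge\pi=0$, every monomial with two or more $\pi$-factors vanishes, leaving only the pure-unit wedge $u(a_1)\wdw u(a_n)$ and the $n$ wedges containing exactly one $\pi$-factor. Condition (2) applied with $x=u(a_1)$ (so $\nu(x)=0$) forces the pure-unit contribution to be zero, while condition (2) directly determines the image of each wedge of the form $\pi\wedge u(a_{j_1})\wdw u(a_{j_{n-1}})$. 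Collecting signs yields the explicit formula
$$\ts_\nu(a_1\wdw a_n)=\sum_{i=1}^{n}(-1)^{i-1}\nu(a_i)\,\ol{u(a_1)}\wdw\widehat{\ol{u(a_i)}}\wdw\ol{u(a_n)},$$
which establishes uniqueness.

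For existence, take the displayed formula as a definition on $(F^\t)^n$. Multilinearity in each argument is immediate from multiplicativity of $\nu$ and of $u(\cdot)$; antisymmetry under the transposition of adjacent arguments is a direct term-by-term sign check. Hence the map descends to $\L^n(F^\t)$, and conditions (1) and (2) are manifest.

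The only genuinely nontrivial step is \emph{invariance under the choice of $\pi$}. If $\pi'=v\pi$ with $v\in \mc O_\nu^\times$, then $u'(a_i)=u(a_i)v^{-\nu(a_i)}$, so $\ol{u'(a_i)}=\ol{u(a_i)}-\nu(a_i)\ol v$ in $\ol F_\nu^\times$. Expanding each inner wedge $\bigwedge_{j\ne i}(\ol{u(a_j)}-\nu(a_j)\ol v)$ and using $\ol v\wedge\ol v=0$, the $\ol v$-free terms reassemble to $\ts^\pi_\nu(a_1\wdw a_n)$, while the single-$\ol v$ terms form a double sum over ordered pairs $(i,l)$, $i\ne l$, with coefficient proportional to $\nu(a_i)\nu(a_l)\,\ol v\wedge \bigwedge_{j\ne i,l}\ol{u(a_j)}$, which is symmetric in $(i,l)$; the combined sign, coming from the outer $(-1)^{i-1}$ and the transposition moving $\ol v$ to the front of the inner wedge, turns out to be antisymmetric in $(i,l)$, so the contributions of $(i,l)$ and $(l,i)$ cancel in pairs. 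This sign cancellation is the main technical step; everything else is formal.
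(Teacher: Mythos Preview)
Your argument is correct and is the standard construction of the tame symbol: split each $a_i=\pi^{\nu(a_i)}u(a_i)$, expand, and then verify independence of the uniformiser via the pairwise sign cancellation you describe. The paper itself does not supply a proof of this proposition; it simply cites \cite{goncharov1995geometry}, where exactly this argument appears. So there is nothing to compare beyond noting that your write-up reproduces the classical proof faithfully.
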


The map $\ts_\nu$ is called \emph{tame symbol map}. The following lemma is straightforward.

\begin{lemma}
\label{lemma:leibniz_rule_tame_symbol}
Let $(F, \nu)$ be a discrete valuation field. Let $k, n$ be two natural numbers satisfying the condition $k<n$. Let $a_1,\dots, a_k\in F^\t$ and $a_{k+1},\dots, a_n$ be $\nu$-adic units. Then:
$$\ts_\nu(a_1\wedge\dots\wedge a_n)=\ts_\nu(a_1\wedge\dots\wedge a_k)\wedge \overline{a_{k+1}}\wedge \dots\wedge \overline{a_n}.$$
\end{lemma}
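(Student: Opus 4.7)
The plan is to reduce, via multilinearity, to the case where each $a_i$ with $i \le k$ is either a uniformiser or a unit, and then to invoke condition~(2) of Proposition~\ref{prop:tame_symbol_classiacal} on both sides of the asserted identity.

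First, I would fix a uniformiser $\pi\in F^\t$. Each $a_i\in F^\t$ may be written uniquely as $a_i=\pi^{\nu(a_i)}u_i$ with $u_i\in\mc O_\nu^\t$, which in the additive notation on $\L^\bullet F^\t$ reads $a_i=\nu(a_i)\pi+u_i$. Because $\ts_\nu$ is $\Q$-linear (forced by uniqueness together with the existence argument in \cite{goncharov1995geometry}) and $\wedge$ is multilinear, both sides of the target identity are additive in each $a_i$ separately. Expanding $a_1\wdw a_k$ yields a sum indexed by subsets $S\subseteq\{1,\dots,k\}$ in which the $i$-th factor is $\pi$ for $i\in S$ and $u_i$ otherwise, weighted by $\prod_{i\in S}\nu(a_i)$. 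Since $\pi\wedge\pi=0$, only the terms with $|S|\le 1$ survive.

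Next, I would check the two surviving types of terms. When $S=\emptyset$, the left-hand side becomes $\ts_\nu(u_1\wdw u_k\wedge a_{k+1}\wdw a_n)$, and condition~(2) applied with $x=u_1$ produces $\nu(u_1)\cdot\ol{u_2}\wdw\ol{u_k}\wedge\ol{a_{k+1}}\wdw\ol{a_n}=0$; the matching right-hand side factor $\ts_\nu(u_1\wdw u_k)$ vanishes for the same reason (or, if $k=1$, by condition~(1) since $\nu(u_1)=0$). When $S=\{i\}$, moving $\pi$ to the leftmost slot introduces the sign $(-1)^{i-1}$ on both sides simultaneously, and condition~(2) applied once to the length-$n$ wedge and once to the length-$k$ wedge yields, in each case,
$$(-1)^{i-1}\nu(a_i)\cdot\ol{u_1}\wedge\dots\wedge\widehat{\ol{u_i}}\wedge\dots\wedge\ol{u_k}\wedge\ol{a_{k+1}}\wdw\ol{a_n},$$
using $\nu(\pi)=1$. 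Summing over $i$ and combining with the vanishing $S=\emptyset$ contribution finishes the proof.

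The only real obstacle is bookkeeping: the expansion must be carried out symmetrically on both sides of the identity, and the signs produced by commuting $\pi$ past the units $u_j$ must be tracked consistently. Beyond that, the lemma follows mechanically from condition~(2) of Proposition~\ref{prop:tame_symbol_classiacal}.
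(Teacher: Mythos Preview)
Your argument is correct; the paper states this lemma without proof, treating it as an immediate consequence of the defining properties in Proposition~\ref{prop:tame_symbol_classiacal}, and your expansion via a fixed uniformiser is exactly the standard way to carry this out. The only cosmetic point is that condition~(2) in the paper is stated for arbitrary $x$ (not just a uniformiser), so the $S=\emptyset$ case follows directly without separating out $k=1$.
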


Let $X$ be a smooth algebraic variety and $D\subset X$ be a prime divisor. Denote by $\nu_D$ a discrete valuation corresponding to $D$. Abusing notation, we write $\ts_D$ for $\ts_{\nu_D}$. Let $Y$ be a variety. Denote $\EP{n}{Y}:=\L^n(\k(Y)^\t\otimes_\Z\Q)$. Here $\k(Y)^\t$ is the multiplicative group of the field $\k(Y)$ considered as an abelian group. Let $\ph\colon Y_1\to Y_2$ be a dominant morphism between varieties. Denote by $\ph^*$ the map $\k(Y_2)^\t\to \k(Y_1)^\t$ given by the formula $\ph^*(\alpha)=\alpha\circ \ph$. Denote by the same symbol the map $\EP{n}{Y_2}\to \EP{n}{Y_1}$ given by the formula $\ph^*(\alpha_1\wdw \alpha_n)=\ph^*(\alpha_1)\wdw \ph^*(\alpha_n)$. Let $D_1\subset Y_1, D_2\subset Y_2$ be prime divisors such that $\ol{\varphi(D_1)}= D_2$. Denote by $e(D_1, D_2)$ the multiplicity of $\ph^{-1}(D_2)$ along $D_1$. This number can be computed as $\ord_{D_1}(\ph^*(\alpha))$, where $\alpha$ is any rational function on $Y_2$ satisfying $\ord_{D_2}(f)=1$. 

\begin{lemma}
\label{lemma:functoriality_of_residue_2}
    Let $\ph\colon Y_1\to Y_2$ be a dominant morphism between smooth varieties. Let $D_1\subset Y_1, D_2\subset Y_2$ be two prime divisors such that $\ol{\ph(D_1)}=D_2$. Denote by $\ol\ph $ the natural map $D_1\to D_2$. Then for any $a\in\EP{n}{Y}$ we have:
    $$\ts_{D_1}(\ph^*(a))=e(D_1, D_2)(\ol\ph)^*(\ts_{D_2}(a)).$$
\end{lemma}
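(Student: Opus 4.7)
The plan is to verify the identity on a $\mathbb Q$-spanning set of $\EP{n}{Y_2}$, since both sides are $\mathbb Q$-linear maps $\EP{n}{Y_2}\to\EP{n-1}{D_1}$. Fix a uniformizer $\pi\in\k(Y_2)^\t$ at $\nu_{D_2}$; then in additive notation every element of $\k(Y_2)^\t$ has the form $k\cdot\pi+u$ with $k\in\Z$ and $u$ a $\nu_{D_2}$-unit. Consequently $\L^n(\k(Y_2)^\t)$ is $\Z$-spanned (hence $\EP{n}{Y_2}$ is $\mathbb Q$-spanned) by wedges of two types: (I) $u_1\wdw u_n$ with all factors units, and (II) $\pi\wedge u_2\wdw u_n$ with $u_2,\dots,u_n$ units. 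The case $n=1$ is handled separately: it reduces to $\nu_{D_1}(\ph^*(a))=e(D_1,D_2)\nu_{D_2}(a)$, which is the defining property of $e(D_1,D_2)$.

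For type (I), all pullbacks $\ph^*(u_i)$ are $\nu_{D_1}$-units, so Lemma~\ref{lemma:leibniz_rule_tame_symbol} yields $\ts_{D_1}(\ph^*(a))=0$, and similarly $\ts_{D_2}(a)=0$, so the identity holds trivially. For type (II), smoothness of $Y_1$ provides a uniformizer $\pi_1$ at $\nu_{D_1}$, and by the definition of $e=e(D_1,D_2)$ we may write $\ph^*(\pi)=e\cdot\pi_1+v$ (additively) with $v$ a $\nu_{D_1}$-unit. Expanding $\ph^*(a)$ by multilinearity, one summand features $\pi_1$ wedged with the units $\ph^*(u_i)$ (giving a non-trivial tame symbol via Lemma~\ref{lemma:leibniz_rule_tame_symbol}), while the other has only units and is therefore killed by $\ts_{D_1}$. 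The result is
$$\ts_{D_1}(\ph^*(a))=e\cdot\ol{\ph^*(u_2)}\wdw\ol{\ph^*(u_n)}.$$
On the other hand $\ts_{D_2}(a)=\ol{u_2}\wdw\ol{u_n}$, and applying $\ol\ph^*$ produces exactly the same expression, provided one knows that the residue map commutes with $\ph^*$ on units, i.e.\ $\ol{\ph^*(u)}=\ol\ph^*(\ol u)$ for every $\nu_{D_2}$-unit $u$.

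The only point requiring care is this last commutation identity. It is the statement that the scheme-theoretic restriction of $\ph$ to the generic point of $D_1$ lands in the generic point of $D_2$ and induces the field embedding $\ol\ph^*\cl\k(D_2)\emb\k(D_1)$; this is immediate from the hypothesis $\ol{\ph(D_1)}=D_2$, together with the identification of a unit at $\nu_D$ with a function regular and invertible at the generic point of $D$. Everything else is pure bookkeeping with multilinearity and the characterisation of $\ts$ from Proposition~\ref{prop:tame_symbol_classiacal}.
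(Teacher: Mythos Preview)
Your proof is correct and follows essentially the same approach as the paper: reduce to a spanning set of the form (uniformizer)$\wedge$(units), then apply Lemma~\ref{lemma:leibniz_rule_tame_symbol} together with the definition of $e(D_1,D_2)$. You are more explicit than the paper in splitting off the type~(I) case, in decomposing $\ph^*(\pi)$, and in justifying the identity $\ol{\ph^*(u)}=\ol\ph^{\,*}(\ol u)$, but these are details the paper simply takes for granted.
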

\begin{proof}
    We can assume that $a=\xi_1\wdw \xi_n$ such that $\ord_{D_2}{\xi_1}=1$ and $\ord_{D_2}(\xi_i)=0$ for any $i>1$. The number $\ord_{D_1}(\ph^*(\xi_i))$ is equal to $e(D_1, D_2)$ if $i=1$ and is equal to $0$ otherwise. Now the statement follows from Lemma \ref{lemma:leibniz_rule_tame_symbol}.
\end{proof}
Let  $\ph\cl Y_1\to Y_2$ be a morphism of finite degree and fix two prime divisors $D_1\subset Y_1$ and $D_2\subset Y_2$ such that $\ol{\ph(D_1)}= D_2$. We have a finite extension $\k(D_2)\subset \k(D_1)$. Denote its degree by $f(D_1, D_2)$. Denote by $Div(\ph)_{cont}$ the set of divisors contracted under $\ph$. Let $D\subset Y_2$ be a prime divisor. Denote by $Div(\ph, D)$ the set of prime divisors $D'\subset Y_1$ such that $\ol{\ph(D')}=D$. There is a bijection between the set $Div(\ph, D)$ and the set of extensions of the discrete valuation $\nu_D$ to the field $\k(Y_1)$. The following statement is well known:

\begin{lemma}
\label{lemma:degree_is_a_sum_multiplicties}
    Let $\ph\colon Y_1\to Y_2$ be an alteration between smooth varieties. Let $D\subset Y_2$ be a prime divisor. The following formula holds:  
    $$\deg \varphi = \sum\limits_{D'\in Div(\ph, D)}e(D', D)f(D', D).$$
\end{lemma}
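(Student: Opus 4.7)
The plan is to reduce this to the classical formula $\sum_i e_i f_i = n$ for finite separable extensions of discrete valuation rings, applied at the generic point of $D$. Let $A = \mathcal{O}_{Y_2, D}$ with fraction field $L = \k(Y_2)$ and residue field $\kappa = \k(D)$; since $Y_2$ is smooth, $A$ is a DVR whose valuation is $\nu_D$. Set $K = \k(Y_1)$, so that $[K:L] = \deg \ph$, and this extension is separable because $\operatorname{char}\k = 0$.

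The main step is to match $Div(\ph, D)$ with the set of extensions of $\nu_D$ to $K$. For each $D' \in Div(\ph, D)$, smoothness of $Y_1$ makes $\mathcal{O}_{Y_1, D'}$ into a DVR, and the ring map $A \to \mathcal{O}_{Y_1, D'}$ induced by $\ph$ is local (because $\ol{\ph(D')} = D$ forces the generic point of $D'$ to map to the generic point of $D$), so the associated valuation on $K$ extends $\nu_D$. Conversely, since $\ph$ is proper and $Y_1$ is normal, the valuative criterion shows that any such extension has a unique center on $Y_1$; as the valuation has rank one and $Y_1$ is smooth, this center must be a prime divisor mapping onto $D$, i.e.\ an element of $Div(\ph, D)$. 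Equivalently, letting $B$ denote the integral closure of $A$ in $K$, the ring $B$ is a semilocal Dedekind domain whose maximal ideals are in bijection with $Div(\ph, D)$, and under this bijection $e(D', D)$ is the ramification index while $f(D', D)$ is the residue degree in the usual valuation-theoretic sense.

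With the identification in hand one invokes the classical computation. Since $K/L$ is finite separable and $A$ is a DVR, $B$ is a finitely generated torsion-free, hence free, $A$-module of rank $n = [K:L] = \deg \ph$. Fixing a uniformizer $\pi$ of $A$, one evaluates $\dim_\kappa(B/\pi B)$ in two ways: freeness gives $n$, while the Chinese remainder decomposition $B/\pi B \cong \prod_i B_{\mathfrak m_i}/\mathfrak m_i^{e_i}B_{\mathfrak m_i}$ combined with $\dim_\kappa(B_{\mathfrak m_i}/\mathfrak m_i^{e_i}B_{\mathfrak m_i}) = e_i f_i$ gives $\sum_i e_i f_i$. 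Equating the two expressions recovers the stated formula.

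The real obstacle is the matching step: one must verify that every maximal ideal of $B$ arises from a codimension-one divisor of $Y_1$ over $D$ (rather than from a subvariety of higher codimension), and that the valuation-theoretic $e$ and $f$ coincide with the geometric definitions given in the paper. This rests on finiteness of normalization in characteristic zero, the valuative criterion applied to the proper morphism $\ph$, and smoothness of $Y_1$ (which guarantees that DVRs extending $\nu_D$ are precisely the local rings at divisors); once it is in place, the remainder is routine DVR theory.
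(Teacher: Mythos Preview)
The paper does not prove this lemma; it simply prefaces it with ``The following statement is well known'' and states it without argument (the bijection between $Div(\ph,D)$ and extensions of $\nu_D$ is likewise asserted without proof just before the lemma). Your proposal supplies exactly the standard proof one would expect, and it is correct.

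One remark on the point you yourself flag as the real obstacle: the reason every extension of $\nu_D$ has center a \emph{divisor} on $Y_1$ can be made precise as follows. If $\eta$ is the center, then $A \subset \mathcal O_{Y_1,\eta} \subset B_{\mathfrak m}$ with the inclusions local. Since $\mathcal O_{Y_1,\eta}$ is integrally closed in $K$ (smoothness of $Y_1$) and contains $A$, it contains $B$; but every local overring of a Dedekind domain is either the fraction field or a localization at a maximal ideal, so $\mathcal O_{Y_1,\eta} = B_{\mathfrak m}$ is a DVR and $\eta$ has codimension one. This fills the only step you left slightly informal, and with it the argument is complete.
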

\subsection{Strictly regular elements}
Let us recall the definition of a simple normal crossing divisor.
Let $X$ be a smooth variety. For a divisor $D=\sum\limits_{\alpha}n_\alpha[D_\alpha]$ on $X$, denote by $\supp(D)$ \emph{its support} defined by the formula $\supp D=\sum\limits_{\alpha} [D_\alpha]$.  For some number of divisors $D_1,\dots, D_n$, we set $$\supp(D_1,\dots, D_n)=\supp(\supp(D_1)+\dots+\supp(D_n)).$$ 

A divisor $D=\sum n_\alpha D_\alpha$ is called \emph{ simple normal crossing} if for any closed point $z\in D$,  there is a regular local system of parameters $\pi_1,\dots, \pi_d$ at $z$, such that locally $D$ is given by the equation $\pi_1\dots \pi_l=0$ for some $l\leq d$.
A divisor $D$ is called \emph{supported on a simple normal crossing divisor} if $\supp(D)$ is a simple normal crossing divisor.  




\begin{definition}
\label{def:strictly_regular}
    Let $X$ be a smooth variety and $x\in X$. An element $b\in\EP{n}{X}$ is  called \emph{strictly regular at $x$} if there is some open subset $U$ containing $x$ and a presentation of $b$ in the form
    $$\sum_\alpha l_\alpha f_1^{\alpha}\wdw f_n^{\alpha}$$
    such that for any $\alpha$, the restriction of the divisor $\supp((f_1^\alpha),\dots, (f_n^\alpha))$ to $U$ is supported on a simple normal crossing divisor on $U$.  The element $b$ is called \emph{strictly regular} if this condition holds for any $x\in X$.

    More generally, an element $b$ of the group $\Gamma(\k(X),m)_j$ is called \emph{strictly regular} at $x$, if there is some open subset $U$ containing $x$ and a presentation of $b$ in the form $\sum_\alpha l_\alpha\{f_1^\alpha\}_k\otimes f_2^{\alpha}\wdw f_r^{\alpha}$ such that for any $\alpha$ the restriction of the  divisor $\supp((f_1^\alpha),\dots, (f_r^{\alpha}))$ to $U$ is supported on a simple normal crossing divisor on $U$. The element $b$ is called \emph{strictly regular} if this condition holds for any $x\in X$.

\end{definition}

We have the following simple lemma:

\begin{lemma}
\label{lemma:strictly_regular_blow_up}
    Let $X$ be a proper variety and $a\in \Lambda^n\k(X)^\times$. There is a proper birational morphism $\varphi\colon \wt X\to X$ such that $\varphi^*(a)\in \Lambda^n\k(\wt X)^\times$ is strictly regular.
\end{lemma}

\begin{proof}
    Indeed, let 
    $$a=\sum_{\alpha}l_\alpha f_1^\alpha\wdw f_n^\alpha$$
    and
    $$D=\sup\left(\sum_{i,\alpha}\sup((f_i^\alpha))\right).$$
    By Hironaka resolution of singularities \cite{hironaka_1}, there is a proper birational morphism $\wt X\to X$ such that $X$ is smooth and $\varphi^*(D)$ is supported on a snc divisor. It follows that $\varphi^*(a)$ is strictly regular.
\end{proof}

We will need the following simple lemma:

\begin{lemma}
\label{lemma:characterisation_of_strictly_regular_elements}
Let $X$ be a smooth $d$-dimensional algebraic variety and $x\in X$. Let $a\in\Gamma(k(X),m)_{j}$ and assume that $a$ is strictly regular at $x$. The element $a$ can be represented as a linear combination of elements of the following form:
    $$\{\pi_1^{n_1}\dots \pi_d^{n_d}u_0\}\otimes \pi_1\wdw \pi_{r'}\wedge u_{r'+1}\wdw u_r.$$
    Here $\pi_i$ is a regular system of parameters at x and all the functions $u_i$ are regular at $x$ and take non-zero values at this point.

\end{lemma}

\begin{proof}
The statement follows from the following fact, which in turn follows from the definition of a snc divisor. Let $f_1, \dots, f_k$ be non-zero rational functions on $X$ and $U$ be some open subset containing $x$. Assume that the restriction of the divisor $\sup((f_1),\dots, (f_k))$ to $U$ is supported on a simple crossing divisor on $U$.  Then  there is a regular system of parameters $\pi_1,\dots \pi_d$ at $x$, rational functions $u_1,\dots, u_k$ which are regular at $x$ and take non-zero values at this point and integers $l_{i,j}$ such that for any $1\leq i\leq k$, we have: $f_i=u_i\pi_1^{l_{i,1}}\dots \pi_d^{l_{i,d}}$. 
\end{proof}

\subsection{Higher Chow groups}
\label{subsec:higher_chow_groups}
We recall that two irreducible subvarieties $V,W$ of some ambient variety $Y$  \emph{intersect properly}, if for any irreducible component $Z$ of $V\cap W$, we have $\dim Z=\dim V+\dim W-\dim Y$.

The definition of Bloch's higher Chow group can be found in \cite{bloch_rriz_1994_mixed}. We use the cubical alternating version.  Denote by $\square^n=(\P^1\bs\{1\})^n$ the algebraic cube of dimension $n$. Subvarieties of $\square^n$ given by the equations of the form $z_i=0,\infty$ are called faces. A cycle is called \emph{admissible} if it intersects all the faces properly. Let $z^m_\square(\k,n)$ be the free abelian group generated by codimension $m$ admissible cycles on $\square^n$. We have the natural projections $\pi_{i,n}\colon \square^n\to\square^{n-1}$ given by the formula $\pi_{i,n}(t_1,\dots, t_i,\dots, t_n)=(t_1,\dots\hat{t_i}, \dots, t_n)$. Denote by $D^m(\k, n)\subset z^m(\k,n)$ a subgroup generated by the images of the maps  $\pi_{i,n}^*\colon z_\square^{m}(\k, n-1)\to z^m_\square(\k, n), i=1,\dots, n$. We set $\mc Z^m_\square(\k,n)= z^m_\square(\k,n)/D^m(\k,n)$.

If $X$ is a variety, $Z\subset X$ be an equidimensional closed subset and $Z'\subset Z$ be some irreducible component of $Z$, we denote by $mult_{Z'}(Z)$ \emph{the  multiplicity} of $Z'$ in $Z$(see \cite[1.5]{fulton2013intersection}). \emph{The cycle associated to $Z$} is defined by the formula
$$[Z]=\sum\limits_{Z'}\mult_{Z'}(Z)[Z'].$$
In this formula the sum is taken over all irreducible components of $Z$.

Let $i\in\{1,\dots, n\}$ and $\varepsilon\in \{0,\infty\}$. Denote by $F_{i,\varepsilon}$ a face given by the equation $t_i=\varepsilon$. Define a map $\ts_{i,\varepsilon}\colon \mc Z_\square^m(\k,n)\to \mc Z_\square^m(\k,n-1)$ given by the formula $[Z]\mapsto [Z\cap F_{i,\varepsilon}].$ In this formula we identify $F_{i,\varepsilon}$ with $\square^{n-1}$ and consider $[Z\cap F_{i,\varepsilon}]$ as a cycle on $\square^{n-1}$.

Define the differential by the following formula

$$d=\sum\limits_{i=1}^n(-1)^{i+1}(\ts_{i,0}-\ts_{i,\infty}).$$

One can show that $d^2=0$. Up to a sign this definition coincides with the definition given in \cite{bloch_rriz_1994_mixed}.

The main idea of this paper is to compare higher Chow groups with the complex $\L(\k,m)$. The generators of the complex $\L(\k,m)$ have the form $[X,f_1\wedge\dots\wedge f_n]$. These elements are clearly antisymmetric with respect to $f_i$. For this reason, it is convenient to use antisymmetric version of higher Chow groups, which we want to recall.

Let $\mb Q[S_n]$ be the group algebra of symmetric group. Define 
$$alt_{n}=\dfrac 1{n!}\sum\limits_{\sigma\in S_n}sgn(\sigma)[\sigma]\in\Q[S_n].$$



There is a natural action of the group $S_{n}$ on $\square^n$ by permuting coordinates.  Let $\N(\k,m)_j=alt_{2m-j}(\mc Z^m_\square(\k,2m-j))$. We consider $\N(\k,m)_*$ as a cohomological complex. It is easy to see that $\CH(\k,m)_*$ is a subcomplex of $\mc Z_\square^m(\k, 2m-*)$. Similarly to \cite[Proposition 5.1.]{bloch_rriz_1994_mixed}(see also p. 581 from loc.cit.) it can be shown that the natural inclusion is a quasi-isomorphism.


\section{The complex $\L(\k,m)$}
\label{sec:Lambda_properties}
In this section, we assume that $\k$ is an algebraically closed field of characteristic zero.

\subsection{Definition of the complex $\Lambda(\k, m)$}
\label{sub:sec:Lambda_definition}
\begin{definition}
    Let $m\geq 0$ and $j\in\{m-2,m-1,m\}$. Set 
    \begin{equation*}
        \begin{cases}
            p=m-j\\ n=2m-j
        \end{cases}
    \end{equation*}
We have
\begin{equation*}
 \begin{cases}
            m=n-p\\ j=n-2p
        \end{cases}
        \end{equation*}
    Throughout the paper we will use the pairs of integers $(m,j), (n, p)$ interchangeably.
    \end{definition}
    \begin{definition} 
    Denote by $\wt\L(\k, m)_j$ a vector space freely generated by  the isomorphism classes of the pairs $(Y, a)$, where: $Y$ is a complete variety over $\k$ of dimension $p=m-j$ and $a\in\Lambda^n(\k(Y)^\t\otimes_\Z\Q)$. Denote by $[Y,a]\in\wt\L(\k,m)_j$ the corresponding element.
Denote by $\L(\k, m)_j$ the quotient of $\wt\L(\k, m)_j$ by the following relations:
$$[\wt Y,\ph^*(a)]=(\deg\ph)[Y,a].$$
$$[Y,a+b]=[Y, a] + [Y, b].$$

In this formula $\ph\colon \wt Y\to Y$ is any alteration. The map $\ph^*$ is defined by the formula
$\ph^*(\alpha_1\wdw \alpha_n)=\ph^*(\alpha_1)\wdw \ph^*(\alpha_n)$. 
\end{definition}
Let us formulate this definition in a more categorical fashion. Denote by ${\mc Var}_{p}$ the category of complete varieties over $\k$ of dimension $p$ and their proper dominant morphisms. Denote by ${\mc Vect}_\Q$ the category of vector spaces over $\Q$. Define a contravariant functor $$\mathrm{EP}^n\cl {\mc Var}_{p}\to {\mc Vect}_\Q.$$ Its value on some variety $Y\in {\mc Var}_{p}$ is equal to the vector space $\EP{n}{Y}=\L^n(\k(Y)^\t\otimes_{\mb Z}{\mb Q})$. For a morphism $\varphi\colon Y_1\to Y_2$ the corresponding map $\EP{n}{Y_2}\to \EP{n}{Y_1}$ is defined by the formula $\Lambda^n(\varphi)(a)=\dfrac 1{\deg{\varphi}}(\ph^*(a))$.

\begin{definition}
    $$\L(\k, m)_j=\colim\limits_{Y\in {\mc Var}_{p}}\EP{n}{Y}.$$
\end{definition}

As the category ${\mc Var}_{p}$ is essentially small this colimit is a well-defined vector space over $\Q$. It is easy to see that the two definitions above are equivalent.

For a variety $Y$ and $a\in \EP{n}{Y}$, we denote the corresponding element in $\L(\k, m)_j$ by $[Y, a]$. For any alteration $\ph\cl Y_1\to Y_2$ we have $[Y_2, a]=\dfrac 1{\deg \ph}[Y_1, \ph^*(a)]$.

The complex $\L(\k,m)$ concentrated in degrees $m-2,m-1,m$ and has the following form:

$$\L(\k,m)_{m-2}\to \L(\k,m)_{m-1}\to \L(\k,m)_m.$$

Let $j\in\{m-2,m-1\}$. Define a map $d\colon \L(\k,m)_j\to \L(\k,m)_{j+1}$ as follows. Let $[Y, a]\in \L(\k,m)_j$. Choose a proper birational morphism $\ph \colon \wt Y\to Y$ with smooth $\wt Y$ such that the element $\ph^*(a)$ would be strictly regular. (In the case $j=m-1$ the condition that $\ph^*(a)$ should be strictly regular is trivial). Such a morphism exists by Lemma \ref{lemma:strictly_regular_blow_up}.  Define
$$d([Y, a])=\sum\limits_{D\subset \wt Y}[D, \ts_D(\ph^*(a))].$$

In this formula the sum is taken over all prime divisors $D\subset \wt Y$. In this section, we will show that the map $d$ is well-defined and satisfies $d^2=0$.

\subsection{Some vanishing statements}
\label{sub:sec:vanishing_statements}
To prove that $d$ is well-defined we will need the following proposition:
\begin{proposition}
\label{prop:differential_strictly_regular_elements}
    Let $S$ be a smooth proper surface. Let $b\in \EP{m+2}{S}$ be strictly regular. Then for any alteration $\varphi \colon \wt Y\to Y$ and any divisor $E\subset \wt Y$ contracted under $\varphi$ we have $[E, \ts_E(\ph^*(a))]=0\in \L(\k, m)_{j+1}$. 
\end{proposition}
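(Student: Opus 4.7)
The plan is to localize the computation of $\ts_E(\ph^*(a))$ near the generic point of $W := \overline{\ph(E)}$—which has codimension $c \geq 2$ in $Y$ since $E$ is contracted—and then produce an involution of $E$ over $X$ which negates the resulting residue, forcing it to vanish with $\Q$-coefficients.

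By linearity in $a$, I would first reduce to the case $a = \alpha_1 \wdw \alpha_n$ a single pure wedge whose divisors have snc support in a neighborhood of a generic smooth point $y_0 \in W$. Next choose local coordinates $t_1, \dots, t_d$ at $y_0$ adapted to $W$ (so that $W = \{t_1 = \dots = t_c = 0\}$ locally) and to the snc structure: the snc components passing through $y_0$—which must contain $W$, since $y_0$ is generic in $W$—can be arranged to coincide with coordinate hyperplanes $\{t_j = 0\}$ for $j \leq c$. Then each $\alpha_i = u_i \prod_{j \leq c} t_j^{a_{ij}}$ with $u_i$ a unit at $y_0$. Writing $\ph^*(\alpha_i) = \ph^*(u_i) + \sum_j a_{ij} \ph^*(t_j)$ in additive notation and using Lemma~\ref{lemma:leibniz_rule_tame_symbol} (combined with multilinearity to handle terms with several high-valuation factors), the residue $\ts_E(\ph^*(a))$ decomposes into a $\Q$-linear combination of wedges whose factors are either residues $\overline{\ph^*(u_i)}$, which factor through $\ph|_E\colon E \to W$, or ratio-residues of the form $\overline{\ph^*(t_k)^{\nu_E(\ph^*(t_j))}/\ph^*(t_j)^{\nu_E(\ph^*(t_k))}}$—rational functions on $E$ recording the direction along which $E$ approaches $W$ inside the normal bundle $N_{W/Y}$.

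To conclude that $[E, \ts_E(\ph^*(a)), f \circ \ph|_E] = 0$ in $\L(X, m)_{j+1}$, I would further alter $\ph$ using de~Jong's theorem so that $\wt Y \to Y$ factors through the blowup $\mathrm{Bl}_{W^{\mathrm{sm}}}(Y) \to Y$, whose exceptional divisor $\P(N_{W/Y}) \to W^{\mathrm{sm}}$ is a $\P^{c-1}$-bundle on which the ratio-residues become standard projective coordinates. Since $f \circ \ph|_E$ factors through $W$, the action of $\mathrm{PGL}_c$ on the fibers lifts to automorphisms of $E$ compatible with the map to $X$. Choose an involution $\sigma$ which acts as $-1$ on the wedge-of-residues (for instance, $[x_0\colon x_1] \mapsto [x_1\colon x_0]$ on the $\P^1$-fiber when $c = 2$, sending a residue $\overline{z}$ to $-\overline{z}$ in additive notation). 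Then the isomorphism relation in $\wt{\L}$, combined with the scalar relation $[E, -b, g] = -[E, b, g]$, gives $[E, \ts_E(\ph^*(a)), f \circ \ph|_E] = -[E, \ts_E(\ph^*(a)), f \circ \ph|_E]$, and hence vanishing over $\Q$.

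The hard part will be the last step: constructing the negating involution $\sigma$ for a general wedge-of-residues produced by the expansion (which may mix factors pulled back from $W$ with ratio-residues in various ways) and verifying that the de~Jong refinement is genuinely compatible with the tame symbol, so that the degree factors introduced by the additional alteration relation do not obstruct the vanishing. For the prototypical case of a smooth blowup along a smooth center the argument is transparent; the general case requires careful bookkeeping of the alteration degrees and the $\mathrm{PGL}_c$-action on $\Lambda^\bullet\k(\P^{c-1})^\times$.
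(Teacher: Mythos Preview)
Your localization and use of the snc structure to write $\alpha_i = u_i \prod_{j} t_j^{a_{ij}}$ is the right start, matching Lemma~\ref{lemma:char_of_strictly_regular} in the paper. The gap is in the involution step.

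Expanding $\ts_E(\ph^*(a))$ as you describe produces terms that are wedges of factors pulled back from $W$ (the $\overline{\ph^*(u_i)}$) together with ratio-residues. But some of these terms contain \emph{no} ratio-residue at all: picking a single $\ph^*(t_j)$ from one slot in the wedge and $\ph^*(u_i)$ from every other slot gives, after applying $\ts_E$, a nonzero multiple of a wedge of $\overline{u_i}$'s, entirely pulled back from $W$. Any automorphism $\sigma$ of the fiber over $W$ fixes such a term, so your argument yields $[E, b, g] = +[E, b, g]$ rather than $-[E, b, g]$. More generally, a fiberwise sign-swap multiplies a term with $r$ ratio-residue factors by $(-1)^r$, and the even-$r$ terms survive. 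No single $\sigma \in \mathrm{PGL}_c$ negates every term simultaneously, so the ``hard part'' you flag is not just bookkeeping --- it is an actual obstruction.

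The paper avoids this by not expanding fully. After Lemma~\ref{lemma:char_of_strictly_regular} one may take $a = \xi_1 \wdw \xi_k \wedge u_{k+1} \wdw u_n$ with the $\xi_i$ part of a regular system of parameters and the $u_i$ units at a chosen point of $W$; then by Lemma~\ref{lemma:leibniz_rule_tame_symbol},
\[
\ts_E(\ph^*(a)) \;=\; \ts_E\bigl(\ph^*(\xi_1 \wdw \xi_k)\bigr) \wedge g_1^*\bigl(\res{u_{k+1} \wdw u_n}{W}\bigr),
\]
an element of the shape (wedge of degree $k{-}1$)${}\wedge{}$(pulled back along $g_1\colon E \to W$). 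The ingredient you are missing is the dimension count: since $W$ lies in the intersection of the $k$ transverse coordinate divisors, $\dim W \leq \dim Y - k$, hence $\dim E - \dim W \geq k-1$. This is exactly the hypothesis of Lemma~\ref{lemma:about_degenerate_cycles} (applied via Lemma~\ref{lemma:differential_normal_crossing_zero}), which kills any $[E, a' \wedge g_1^*(b')]$ with $a'$ of wedge degree at most $\dim E - \dim W$. Its proof uses \emph{two} mechanisms: when the induced map $E \dashrightarrow W \times (\P^1)^{k-1}$ drops dimension, a degree-$l$ self-map of the $(\P^1)^s$ factor (Lemma~\ref{lemma:degenerate_cycles}) forces $x = x/l$ and hence $x=0$; only in the borderline full-dimensional case does the involution $t_1 \mapsto 1/t_1$ enter. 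Your proposed $\sigma$ is a version of this second mechanism; the degree-$l$ trick for the degenerate case is what is missing.
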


\begin{remark}
    It is easy to see that the condition that $b$ is strictly regular is essential. Indeed, let $S\to \mb P^2$ be a blow-up of some point and let $E\subset S$ be the exceptional divisor. It is easy to see that the tame-symbol map $\ts_E\colon\Lambda^{m+2}\k(S)^\times\to\Lambda^{m+1}\k(E)^\times$ is surjective. Therefore, if the previous lemma were satisfied without the condition that $b$ is strictly regular, then for any $a\in\L^{m+1}\k(\P^1)^\t$, the element  $[\P^1,a]\in \L(\k,m)_{m-1}$ would vanish. However, by Theorem \ref{th:Lambda_is_generated_by_rationally_connected} such elements generate the group $\L(\k,m)_{m-1}$. So the group $\L(\k,m)_{m-1}$ would vanish. And Theorem \ref{th:Totaro_isomorphism_intro} would imply that the differential in the polylogarithmic complex $\Gamma_{\geq m-1}(\k,m)$ would be zero. However, it is easy to see that the differential in this complex is nonzero.
\end{remark}

The previous proposition follows from the following two lemmas:

\begin{lemma}
\label{lemma:finitnes_of_sum}
Let $S$ be a smooth proper surface. Let $b\in \Lambda^{m+2} L^\t$ be strictly regular. For any alteration $\ph\colon \wt S\to S$ and any divisor $E\subset \wt S$ contracted under $\ph$, the element $\ts_E(\ph^*(b))$ belongs to the subgroup generated by the elements of the form $f\wedge c_2\wdw c_{m+1}$, where $f\in \k(E)^\times$ and $c_i\in \k^\times$.
\end{lemma}

The proof of this lemma is essentially contained in the proof of Lemma 2.16 from \cite{bolbachan_2023_chow}. We remark that the proof of Lemma 2.16  from \cite{bolbachan_2023_chow} used Lemma 2.14 which proof was not correct. However, Lemma 2.14 is a particular case of Lemma \ref{lemma:characterisation_of_strictly_regular_elements} from this paper, which proof was given  in Section \ref{sec:preliminary_def}. The same remark applies to results stated in Section 4.3 and Section 7 of this paper.

\begin{proof}
    Denote by $z\subset E$ the image of $E$ under $\varphi$. We apply Lemma \ref{lemma:characterisation_of_strictly_regular_elements}. We can assume that $a$ has the form $f_1\wedge f_2\wedge g_3\wdw g_{m+2}$ such that $g_i$ are regular at $z$ and take non-zero values at this point. This implies that the functions $\res{\varphi^*(g_i)}{E}$ are non-zero constants. Now, the statement follows from Lemma \ref{lemma:leibniz_rule_tame_symbol} for $k=2$.
\end{proof}

\begin{lemma}
\label{lemma:Beilinson_Soule_vanishing_weight_zero}
Let $X$ be a smooth projective curve, $f\in\k(X)$ and $c_2,\dots, c_{m+1}\in\k^\t$. We have:
        $$[X, f_1\wedge c_2\wdw c_{m+1}]=0\in \L(\k,m)_{m-1}.$$
\end{lemma}

\begin{proof} 
Consider the multiplication map $\mu\colon \Lambda(\k,0)_0\otimes\Lambda^{m}\k^\times\to \L(\k,m)_{m-1}$ given by the formula
$$[X,f]\times (a_1\wdw a_{m})\mapsto[X,f\wedge a_1\wdw a_{m}].$$

The statement of this lemma says that $\mu$ is zero. To prove this, it is enough to show that $\Lambda(\k,0)_0=0$. So we can assume that $m=0$. We need to check that for any $f\in\k(X)^\times$ we have
$$[X,f]=0\in \Lambda(\k,0)_0.$$

Consider the following two cases:

\begin{enumerate}
    \item $f$ is constant. Choose some morphism $\psi\colon X\to\P^1$ of finite degree. We get
    $$[X,c]=(\deg \psi)[\P^1, c].$$
    So we can assume that $X=\P^1$. Choose some morphism $\psi_2\colon \P^1\to\P^1$ of degree $2$. We get:
    $$[\P^1,c]=(\deg \psi_2)(\P^1,c)=2[\P^1,c].$$
    So $[\P^1,c]=0$.

    \item Let $f$ be non-constant. We get:
    $$[X,f]=(\deg f)[\P^1,t].$$
    In this formula $t$ is the standard coordinate on $\P^1$. Let $\psi\colon \P^1\to\P^1$ be a map given by the formula $\psi(z)=z^{-1}$. We get:
    $$[\P^1,t]=(\deg \psi)^{-1}[\P^1,\psi^*(t)]=[\P^1, t^{-1}]=-[\P^1,t].$$
    So $[\P^1,t]=0$ and hence $[X,f]=0$.
\end{enumerate}
\end{proof}

The following lemma will be used in Section \ref{subsec:lambda_higher_chow_groups}.

\begin{lemma}
\label{lemma:differential_exceptional_divisor_zero_Bloch}
    Let $S$ be a proper surface and $f_1,\dots, f_n$ be regular maps from $S$ to $\mathbb P^1$ considered as rational functions on $S$. Let $D_i^1=f_i^{-1}(1)$. Set $Z=\cup D_i^1$ and $U=S\bs Z$. Denote by $D_i^0, D_i^\infty$ the divisors of zeros and poles of the functions $f_i$ on $U$. Assume that the divisors $D_1^0, D_1^\infty,\dots, D_n^0, D_n^\infty$ do not have common components and any three such divisors do not have common points. Let $\wt S$ be a smooth variety and $\ph\colon \wt S\to S$ be an alteration. Then for any divisor $E\subset \wt S$ contracted under $\ph$ we have $$[E, \ts_E(\ph^*(f_1\wdw f_n))]=0\in \L(\k, n-2)_{n-3}.$$

\end{lemma}

\begin{proof}
We can assume that $n\geq 1$. Denote by $w$ the image of $E$ under $\ph$. Assume that $w\subset Z_i$ for some $i$. Then the rational function $\res{\ph^*(f_i)}{E}$ is equal to $1$ and the statement is obvious. Assume that $w$ is not contained in $Z$. Without loss of generality, we can assume that $w\notin D_i^0,D_i^1$ for any $i\geq 3$. This implies that there are elements $c_3,\dots, c_n\in \k^\times$ so that $\res{\varphi^*(f_i)}{E}=c_i$. Now the statement follows from Lemma \ref{lemma:leibniz_rule_tame_symbol} and the previous lemma.
\end{proof}

The following lemma will be used in Section \ref{sec:W_surjective}.

\begin{lemma}
\label{lemma:about_degenerate_cycles}

Let $X$ be a proper variety and $f_1,\dots, f_n\in\k(X)^\times$. Denote by $s$ the transcendence degree of the extension $\k(f_1,\dots, f_n)\subset \k(X)$. Assume that $s>0$. Then
$$[Y, f_1,\dots, f_n]=0\in \Lambda(\k,n-\dim Y)_{n-2\dim Y}.$$
\end{lemma}

\begin{proof}Let $L=\k(f_1,\dots, f_n)$. There is an algebraic variety $X'$, a rational map $\psi\colon X\ard X'$ and rational functions $\wt f_1,\dots, \wt f_n\in\k(X')^\times$ such that $f_i=\psi^*(\wt f_i)$. Taking resolution of singularities, we can assume that $\psi$ is regular.

Choose a transcendence basis $t_1,\dots, t_s$ of $\k(X)$ over $\k(X')$. The extension
    $\k(X')(t_1,\dots, t_s)\subset \k(X)$ is finite. This implies that there is a rational map $g\colon X\ard X'\t(\P^1)^s$ of finite degree such that $\psi=\pi\circ g$, where $\pi\colon X'\t(\P^1)^s\to X'$ is the natural projection. Taking resolution of singularities, we can assume that $g$ is regular. We get:
    \begin{align*}
        [X,f_1\wdw f_n]=(\deg g)[X'\times (\P^1)^s, \wt f_1\wdw \wt f_n].       
    \end{align*}

    Thus we have reduced the statement to the case when $X=X\t(\P^1)^s, s>0$ and $f_i$ are functions on $X'$. Let $\theta$ be some morphism $\theta\colon (\P^1)^s\to (\P^1)^s$ of degree $2$. We get
    \begin{align*}
        &[X'\t (\P^1)^s, \wt f_1\wdw \wt f_n]=\\
        &1/2[X'\t (\P^1)^s,  (id\t\theta)^*(\wt f_1\wdw \wt f_n)]=\\
        &1/2[X'\t (\P^1)^s, \wt f_1\wdw \wt f_n].
    \end{align*}

    It follows that $[X'\t (\P^1)^s, \wt f_1\wdw \wt f_n]=0$.

\end{proof}

\subsection{The differential is well-defined}
\label{sub:sec:Lambda+correctness}

The goal of this section is to prove the following theorem:

\begin{theorem}
\label{th:Lambda(m)_well_defined}
    Let $X$ be a variety. The complex $\L(\k, m)$ is well defined.
\end{theorem}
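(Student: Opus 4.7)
The plan is to verify three things in sequence: that the expression defining $d([Y,a,f])$ is independent of the alteration $\ph$; that the resulting assignment descends to the quotient defining $\L(X,m)_j$; and that $d\circ d=0$.

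For independence, I fix a triple $(Y,a,f)$ and suppose $\ph_i\cl \wt Y_i\to Y$, $i=1,2$, are two alterations with $\wt Y_i$ smooth and $\ph_i^* a$ strictly regular. By \cite{deJong1996smoothness} there exists a smooth variety $\wt Y$ with alterations $\psi_i\cl \wt Y\to \wt Y_i$ such that the further pullback of $a$ stays strictly regular and $\ph_1\circ\psi_1=\ph_2\circ\psi_2$. It therefore suffices to show that the formula computed via $\ph$ agrees with the formula computed via $\ph\circ\psi$ for a single alteration $\psi\cl \wt Y'\to \wt Y$ of smooth varieties with $\psi^*\ph^* a$ strictly regular. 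Split the sum over divisors $D'\subset \wt Y'$ into those contracted and those not contracted by $\psi$. The contracted divisors contribute zero by Proposition \ref{prop:differential_strictly_regular_elements} applied to the strictly regular element $\ph^* a$ on $\wt Y$. For a non-contracted $D'$ with $D=\ol{\psi(D')}$, Lemma \ref{lemma:functoriality_of_residue_2} gives $\ts_{D'}(\psi^*\ph^* a)=e(D',D)\,\ol\psi{}^*(\ts_D(\ph^* a))$, so the alteration relation in $\L(X,m)_{j+1}$ identifies the corresponding term with $e(D',D)f(D',D)[D,\ts_D(\ph^* a),\res{f\circ\ph}{D}]$. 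Summing over $D'\in \Div(\psi,D)$ and applying Lemma \ref{lemma:degree_is_a_sum_multiplicties} yields $(\deg\psi)[D,\ts_D(\ph^* a),\res{f\circ\ph}{D}]$; dividing by $\deg(\ph\circ\psi)=\deg\ph\cdot \deg\psi$ recovers the formula based on $\ph$.

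Descent to the quotient is then almost free. Linearity and scaling in $a$ are visible from the formula since both $\ts_D$ and $\ph^*$ are $\Q$-linear. The alteration relation is exactly the independence statement above applied to compare $d[\wt Y,\ph^* a, f\circ\ph]$ computed via the identity alteration of $\wt Y$ with $d[Y,a,f]$ computed via $\ph$; the two agree after the expected scaling by $\deg\ph$.

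For $d\circ d=0$, choose $\ph\cl \wt Y\to Y$ with $\wt Y$ smooth and $\ph^* a$ strictly regular. By Lemma \ref{lemma:char_of_strictly_regular}, in a neighborhood of each point we may write $\ph^* a=\xi_1\wedge\dots\wedge \xi_k\wedge u_{k+1}\wedge\dots\wedge u_n$ with $\xi_i$ extendable to a regular system of parameters and $u_i$ units. It follows that each $\ts_{D_i}(\ph^* a)$ is again strictly regular on the component $D_i=\{\xi_i=0\}$, so the second differential may be computed using any desingularisation of $D_i$ (any new exceptional divisor contributes zero by Proposition \ref{prop:differential_strictly_regular_elements}). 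A direct computation using Lemma \ref{lemma:leibniz_rule_tame_symbol} in these local coordinates shows $\ts_{D_i\cap D_j}\ts_{D_j}(\ph^* a)=-\ts_{D_j\cap D_i}\ts_{D_i}(\ph^* a)$, so the pairs $(D_i,D_j)$ and $(D_j,D_i)$ cancel in $d(d[Y,a,f])$. The main obstacle is the bookkeeping in the independence step: one must simultaneously invoke the multiplicity formula of Lemma \ref{lemma:degree_is_a_sum_multiplicties} for non-contracted divisors and the vanishing of Proposition \ref{prop:differential_strictly_regular_elements} for contracted ones, working consistently under the alteration relation in $\L(X,m)_{j+1}$.
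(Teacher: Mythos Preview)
Your proposal is correct and follows essentially the same approach as the paper: a common refinement of two alterations reduces independence to a single factorisation, which is handled by Proposition~\ref{prop:differential_strictly_regular_elements} on contracted divisors and Lemmas~\ref{lemma:functoriality_of_residue_2} and~\ref{lemma:degree_is_a_sum_multiplicties} on the rest; the vanishing of $d^2$ is reduced to the antisymmetry $\ts_{D_i\cap D_j}\ts_{D_j}=-\ts_{D_j\cap D_i}\ts_{D_i}$ in local snc coordinates, using that $\ts_D(a)$ remains strictly regular. The paper packages the $d^2=0$ step as a separate ``Parshin reciprocity law'' (Theorem~\ref{th:Parshin_reciprocity_law}) summed over flags $D'\subset D\subset Y$, but the content is identical to your local computation.
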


 Let $Y_1$ and $Y_2$ be smooth varieties and $\ph\colon Y_1\to Y_2$ be an alteration. Let $D_1\subset Y_1, D_2\subset Y_2$ be prime divisors such that $\varphi(D_1)=D_2$. We recall that $e(D_1,D_2)$ denotes the multiplicity of $\varphi^*(D_2)$ along $D_1$ and $f(D_1,D_2)$ denotes the degree of the extension $\k(D_2)\subset \k(D_1)$. We need the following lemma.

\begin{lemma}
\label{lemma:functoriality_of_residue}
    Let $Y_1$ and $Y_2$ be smooth varieties and $\ph\colon Y_1\to Y_2$ be an alteration. Let $D_1\subset Y_1, D_2\subset Y_2$ be prime divisors such that $\varphi(D_1)=D_2$. For any $a\in\Lambda^n\k(Y_2)^\times$ we have
    $$[D_1,\ts_{D_1}(\ph^*(a))]=e(D_1, D_2)f(D_1, D_2)[D_2,\ts_{D_2}(a)].$$
\end{lemma}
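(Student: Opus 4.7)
The plan is to show that this lemma follows formally by combining the pointwise identity provided by Lemma \ref{lemma:functoriality_of_residue_2} with the two defining relations of $\L(X,m)$ (scaling and pullback along an alteration). No new geometric input is required; the only thing to check carefully is that the restricted map $\ol\ph \colon D_1 \to D_2$ is itself an alteration of the correct degree, so that the alteration relation applies.

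First I would verify that $\ol\ph \colon D_1 \to D_2$ is an alteration of degree $f(D_1, D_2)$. Properness of $\ph$ guarantees that $\ph(D_1)$ is closed, and combined with the hypothesis $\ph(D_1) = D_2$ this makes $\ol\ph$ proper and surjective between integral schemes. Since $\ph$ is an alteration we have $\dim Y_1 = \dim Y_2$, hence $\dim D_1 = \dim D_2$, so $\ol\ph$ is generically finite. By definition $\deg \ol\ph = [\k(D_1) : \k(D_2)] = f(D_1, D_2)$. In particular $\res{f \circ \ph}{D_1} = \res{f}{D_2} \circ \ol\ph$, and $\res{f}{D_2} \circ \ol\ph$ is a proper morphism $D_1 \to X$, so the triples below are valid generators of $\L(X, m)_{j+1}$.

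Next I would invoke Lemma \ref{lemma:functoriality_of_residue_2} applied to $\ph$ and the pair $D_1, D_2$, which yields the identity
$$\ts_{D_1}(\ph^*(a)) = e(D_1, D_2)(\ol\ph)^*(\ts_{D_2}(a))$$
in $\EP{n-1}{D_1}$. Using the $\Q$-linearity relation $[Y, \lambda \cdot b, g] = \lambda [Y, b, g]$ in the definition of $\L(X, m)_{j+1}$, this immediately rewrites as
$$[D_1, \ts_{D_1}(\ph^*(a)), \res{f \circ \ph}{D_1}] = e(D_1, D_2)\,[D_1, (\ol\ph)^*(\ts_{D_2}(a)), \res{f}{D_2} \circ \ol\ph].$$

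Finally I would apply the alteration relation $[\wt Y, \psi^*(b), g \circ \psi] = (\deg \psi)\,[Y, b, g]$ to the alteration $\ol\ph \colon D_1 \to D_2$ of degree $f(D_1, D_2)$ and the element $\ts_{D_2}(a) \in \EP{n-1}{D_2}$, getting
$$[D_1, (\ol\ph)^*(\ts_{D_2}(a)), \res{f}{D_2} \circ \ol\ph] = f(D_1, D_2)\,[D_2, \ts_{D_2}(a), \res{f}{D_2}].$$
Multiplying by $e(D_1, D_2)$ and chaining the two displayed equalities produces the asserted formula. The only conceptual step is the verification that $\ol\ph$ qualifies as an alteration, which is routine; everything else is bookkeeping inside the colimit definition of $\L(X,m)$.
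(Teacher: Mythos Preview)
Your proof is correct and follows exactly the approach indicated in the paper, which simply states that the lemma is a direct corollary of Lemma \ref{lemma:functoriality_of_residue_2} and the definition of $\L(X,m)$. Your added verification that $\ol\ph$ is an alteration of degree $f(D_1,D_2)$ is a useful detail the paper leaves implicit.
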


\begin{proof}
    Denote the natural map $D_1\to D_2$ by $\psi$. We have:
    $$[D_2,\ts_{D_2}(a)]=(\deg \psi)^{-1}[D_1, \psi^*(\ts_{D_2}(a))].$$
As $f(D_1,D_2)=\deg\psi$, we get

$$e(D_1, D_2)f(D_1, D_2)[D_2,\ts_{D_2}(a)]=e(D_1, D_2)[D_1,\psi^*(\ts_{D_2}(a))].$$

So it remains to prove that

$$e(D_1, D_2)\psi^*(\ts_{D_2}(a))=\ts_{D_1}(\ph^*(a)).$$

This is a statement of Lemma \ref{lemma:functoriality_of_residue_2}.
\end{proof}

\begin{proposition}
\label{prop:d_is_well_defined}
    Let $j\in\{m-2,m-1\}.$ The map $d\colon \L(\k,m)_j\to \L(\k, m)_{j+1}$ is well-defined. 
\end{proposition}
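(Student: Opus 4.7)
The plan is to verify three things: (i) independence of the formula defining $d([Y,a,f])$ from the choice of alteration $\ph\cl \wt Y\to Y$; (ii) linearity in $a$; (iii) compatibility with the scaling relation $[\wt Y, \ph^*(a), f\circ\ph]=(\deg\ph)[Y,a,f]$. Existence of at least one alteration making $\ph^*(a)$ strictly regular is guaranteed by \cite{deJong1996smoothness}. Linearity follows by choosing a common smooth alteration $\ph\cl \wt Y\to Y$ such that both $\ph^*(a_1)$ and $\ph^*(a_2)$ are strictly regular (combine \cite{deJong1996smoothness} applied to $a_1$ with a further application to $\ph^*(a_2)$) and invoking linearity of the tame symbol. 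Granted (i), the scaling relation is obtained by computing the left-hand side via $\mathrm{id}_{\wt Y}$ and the right-hand side via $\ph$; the factors of $1/\deg\ph$ cancel. Thus the entire content is (i).

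To prove (i), let $\ph_i\cl \wt Y_i\to Y$, $i=1,2$, be smooth alterations with $\ph_i^*(a)$ strictly regular. Choose a component of $\wt Y_1\t_Y\wt Y_2$ dominating $Y$ and apply \cite{deJong1996smoothness} to produce a smooth alteration $\sigma\cl Z\to \wt Y_1\t_Y\wt Y_2$ on which the pullback of $a$ is strictly regular. Since $\sigma$ dominates both $\ph_i$, we reduce to the case of a single further alteration: given $\ph_1\cl \wt Y_1\to Y$ as above and $\psi\cl \wt Y_2\to \wt Y_1$ an alteration of smooth varieties with $\psi^*\ph_1^*(a)$ strictly regular, we must show that the formulas for $d$ evaluated via $\ph_1$ and via $\ph_1\circ\psi$ agree.

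Split the sum $\sum_{D'\subset \wt Y_2}[D',\ts_{D'}(\psi^*\ph_1^*(a)),\res{f\circ\ph_1\circ\psi}{D'}]$ according to whether $D'$ is contracted by $\psi$ or not. Contracted divisors contribute zero by Proposition \ref{prop:differential_strictly_regular_elements} applied to the smooth variety $\wt Y_1$ with strictly regular element $\ph_1^*(a)$. For a non-contracted $D'$, set $D=\overline{\psi(D')}\subset \wt Y_1$. Lemma \ref{lemma:functoriality_of_residue} gives
$$[D',\ts_{D'}(\psi^*\ph_1^*(a)),\res{f\circ\ph_1\circ\psi}{D'}] = e(D',D)f(D',D)\,[D,\ts_D(\ph_1^*(a)),\res{f\circ\ph_1}{D}].$$
Summing over $D'$ with $\overline{\psi(D')}=D$, Lemma \ref{lemma:degree_is_a_sum_multiplicties} collapses the coefficient $\sum_{D'} e(D',D)f(D',D)$ to $\deg\psi$. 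Dividing by $\deg(\ph_1\circ\psi)=\deg\ph_1\cdot\deg\psi$ recovers the formula associated to $\ph_1$, and (i) is established.

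The only non-formal ingredient is the vanishing for contracted divisors, and this is precisely what the strict-regularity hypothesis in the definition of $d$ is for: it is the content of Proposition \ref{prop:differential_strictly_regular_elements}, which in turn rests on the structure Lemma \ref{lemma:char_of_strictly_regular} and the normal-crossings vanishing Lemma \ref{lemma:differential_normal_crossing_zero}. The bookkeeping for non-contracted divisors is routine once Lemmas \ref{lemma:functoriality_of_residue} and \ref{lemma:degree_is_a_sum_multiplicties} are in place.
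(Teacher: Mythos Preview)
Your proof is correct and follows essentially the same route as the paper: reduce to a single further alteration via a common refinement, split the divisor sum into contracted and non-contracted parts, kill the contracted part with Proposition~\ref{prop:differential_strictly_regular_elements}, and handle the rest using Lemmas~\ref{lemma:functoriality_of_residue} and~\ref{lemma:degree_is_a_sum_multiplicties}. You additionally spell out the linearity and scaling checks (ii) and (iii), which the paper leaves implicit.
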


\begin{proof}
When $j=m-1$, the statement is obvious. So we can assume that $j=m-2$. Let $a\in\EP{m+2}{S}$. By Lemma \ref{lemma:strictly_regular_blow_up} there is a proper birational morphism $\ph\colon \wt S\to S$ such that  the element $\ph^*(a)$ is strictly regular.
We recall that the element $d([S, a])$ is defined by the formula
$$\sum\limits_{D\subset \wt S}[D,\ts_{D}(\ph^*(a))].$$
We need to prove that this element does not depend on $\ph$ and is functorial with respect to alterations between surfaces. 

Let $\ph_1\colon \wt S_1\to S$ and $\ph_2\cl\wt S_2\to S$ be two such morphisms. It follows from Lemma \ref{lemma:strictly_regular_blow_up} that there are proper birational morphisms $\ph_{31}\colon \wt S_3\to S_1, \ph_{32}\colon\wt S_{3}\to S_2$ such that $\ph_1\circ\ph_{31}=\ph_{2}\circ\ph_{32}$ and the element $(\ph_{1}\circ\ph_{31})^*(a)=(\ph_{2}\circ\ph_{32})^*(a)$ is strictly regular. This shows that we only need to check the following statement. Let $\ph\colon \wt S\to S$ be an alteration and $a\in \EP{n}{Y}$. Assume that the elements $a$ and $\ph^*(a)$ are strictly regular. Then the following formula holds:
$$\sum\limits_{D\subset S}[D, 
\ts_D(a)]=(\deg \varphi)^{-1}\sum\limits_{D'\subset \wt S}[D',\ts_{D'}(\ph^*(a))].$$

We have:
\begin{align*}
          &\sum\limits_{D'\subset \wt S}[D',\ts_{D'}(\ph^*(a))]=\\
          &=\sum\limits_{D'\in Div(\ph)_{cont}}[D',\ts_{D'}(\ph^*(a))]+\sum\limits_{D\subset S}\sum\limits_{D'\in Div(\ph, D)}[D',\ts_{D'}(\ph^*(a))].
\end{align*}

In this formula $Div(\ph)_{cont}$ is the set of divisors contracted under $\ph$ and $Div(\varphi, D)$ the set of divisors whose image under $\varphi$ is equal to $D$. Each term in the first sum vanishes due to Proposition  \ref{prop:differential_strictly_regular_elements}. By Lemma \ref{lemma:functoriality_of_residue} and Lemma \ref{lemma:degree_is_a_sum_multiplicties}   the second term is equal to
        \begin{align*}
            &\sum\limits_{D\subset S}\sum\limits_{D'\in Div(\ph, D)}e(D', D)f(D',D)[D,\ts_{D}(a)]=\\&\sum\limits_{D\subset S}[D,\ts_{D}(a)]\sum\limits_{D'\in Div(\ph, D)}e(D', D)f(D',D)=\\
            &\deg(\ph)\sum\limits_{D\subset S}[D,\ts_{D}(a)].
        \end{align*}
        The proposition is proved.
\end{proof}

To finish the proof of Theorem \ref{th:Lambda(m)_well_defined} we need to show that $d^2=0$. This is a direct corollary of the following theorem:

\begin{theorem}
\label{th:Parshin_reciprocity_law}
    Let $S$ be a smooth complete surface. Let $a\in\EP{m+2}{S}$ be strictly regular. We have
    $$\sum\limits_{D\subset S}\sum\limits_{z\in D}\ts_{z}\ts_{D}(a)=0\in \Lambda^m\k^\times.$$
    
    Here the first sum is taken over all prime divisors on $S$. It is clear that in this sum there are only finitely many non-zero terms.
\end{theorem}

The point of this theorem is that for given $z$ there are precisely two $D$ such that $z\in D$ and $\ts_{z}\ts_{D}(a)\ne 0$. Moreover, these two elements have opposite signs. The particular case of this theorem when $m=2$ was proved in \cite[Theorem 2.15]{bolbachan_2023_chow}. The proof of the general case is similar. The case $m=0$ follows from classical Parshin reciprocity law \cite{parshin1975class}.

\begin{proof}[The proof of Theorem \ref{th:Lambda(m)_well_defined}]
    By Proposition \ref{prop:d_is_well_defined} we already know that $d$ is well-defined. Let us prove that $d^2=0$. We need to show that $d^2([S,a])=0$. By Lemma \ref{lemma:strictly_regular_blow_up}, there is a proper birational map $\varphi\colon \wt S\to S$ so that $\varphi^*(a)$ is strictly regular. So $[S,a]=[\wt S,\varphi^*(a)]$. This shows that we can assume that $a$ is strictly regular. 
    We have 
    $$d^2([S,a]) = \sum\limits_{D\subset S}d([D,\ts_D(a)]).$$
    Now the statement follows from the fact that
        $$d([D,\ts_D(a)])=\sum_{z\in D}[\spec\k,\ts_{z}\ts_{D}(a)].$$
and Theorem \ref{th:Parshin_reciprocity_law}.
\end{proof}

\subsection{The complex $\L(\k,m)$ and the Bloch's higher Chow groups}
\label{subsec:lambda_higher_chow_groups}
We recall that we have given the definition of cochain complex calculated  cubical higher Chow group $\mc Z_\square^m(\k,2m-*)$ and its alternating version $\mc N(\k,m)$ in Section \ref{subsec:higher_chow_groups}.

We recall that $\sigma_{\geq m-2}$ denotes the stupid truncation. Define a morphism of complexes $$\mc W'\colon \sigma_{\geq m-2}\mc Z_\square^m(\k,2m-*)\to\L(\k,m)_*$$ as follows. Let $[Z]\in  \mc Z_\square^m(\k,2m-j)$ be an irreducible cycle. Denote by $\overline Z$ the closure of this cycle in $(\P^1)^n$. Define
$$\mc W'(Z)=[\overline Z,  \wt t_1\wedge\dots\wedge \wt t_n].$$
In this formula $\wt t_i$ are the restrictions of the standard coordinates on $(\P^1)^n$ to $\overline Z$. If $(x_i:y_i)$ are homogenious coordinates on the $i$-th $\P^1$ then $t_i=x_i/y_i$. Finally define $\mc W$ as the composition $\sigma_{\geq m-2}\mc N(\k,m)_*\to \sigma_{m-2}\mc Z_\square^m(\k,2m-*)\xrightarrow{\mc W'}\L(\k,m)_*$. Here is the main result of this subsection:

\begin{theorem}
\label{th:W(m)_is_morphism}
    The map $\mc W'$ is a morphism of complexes. In particular, the map $\mc W$ is a morphism of complexes as well.
\end{theorem}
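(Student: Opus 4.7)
The main content is to check $d\mc W'(Z) = \mc W'(dZ)$ for an irreducible admissible cycle $Z \in \mc Z(X,m)_j$; once $\mc W'$ is a morphism of complexes, $\mc W$ is automatically one, being the restriction of $\mc W'$ to the subcomplex $\mc N(X,m)\emb \mc Z(X,m)$. Set $n = 2m-j$ and let $\wt Z \subset X\t(\P^1)^n$ be the closure of $Z$, with projection $f_Z\cl \wt Z \to X$ and pulled-back coordinates $\wt t_1,\dots,\wt t_n$. One first checks that $\mc W'$ vanishes on degenerate cycles so that the definition descends to $\mc Z(X,m)$: if $Z = (\mathrm{id}\t\pi_{i,n})^*(W)$, then $\wt Z \cong \wt W \t \P^1$ with $\wt t_i$ pulled back from the extra factor, and Lemma \ref{lemma:about_degenerate_cycles} forces the corresponding element of $\L(X,m)$ to be zero.

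To compute $d\mc W'(Z)$, I choose an alteration $\ph\cl \wt{\wt Z}\to \wt Z$ with $\wt{\wt Z}$ smooth such that $\ph^*(\wt t_1\wdw\wt t_n)$ is strictly regular; such an alteration exists by combining de Jong's theorem with an embedded resolution of the divisor $\sum_i\supp((\wt t_i))$. Then
$$d\mc W'(Z) = \frac{1}{\deg\ph}\sum_{D\subset \wt{\wt Z}} \bigl[D,\, \ts_D(\ph^*(\wt t_1\wdw\wt t_n)),\, \res{f_Z\circ\ph}{D}\bigr].$$
I classify each summand by the image $D' := \ph(D) \subset \wt Z$. If $D$ is contracted by $\ph$, the term vanishes by Proposition \ref{prop:differential_strictly_regular_elements}. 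If $D'$ is a divisor at which every $\wt t_i$ is a unit, then Lemma \ref{lemma:leibniz_rule_tame_symbol} reduces the tame symbol to that of a wedge of units, which is zero (extract any single factor; its valuation is $0$). If some $\wt t_k$ satisfies $\wt t_k|_{D'} \equiv 1$, then after extracting a valuation-bearing factor via Lemma \ref{lemma:leibniz_rule_tame_symbol} the residue $\ol{\wt t_k} = 1 \in \k(D')^\t$ appears among the remaining factors; since $1$ is the identity, which is written additively as $0$, the whole expression vanishes.

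The surviving terms correspond to those $D$ whose image $D'$ is an irreducible component of $\wt Z \cap \{t_i = \epsilon\}$, $\epsilon \in \{0,\infty\}$, with no $\wt t_k|_{D'} \equiv 1$. For such a component, setting $r := \ord_{D'}(\wt t_i)$, Lemma \ref{lemma:leibniz_rule_tame_symbol} gives
$$\ts_{D'}(\wt t_1\wdw\wt t_n) = (-1)^{i-1} r \cdot \ol{\wt t_1}\wdw\widehat{\ol{\wt t_i}}\wdw \ol{\wt t_n},$$
and admissibility of $Z$ identifies $r$ with the intersection multiplicity of $D'$ in the Bloch face cycle $\partial_i^\epsilon Z$. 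The right-hand side is precisely $\mc W'$ applied to that face component. Summing all $D\subset\wt{\wt Z}$ lying over a fixed $D'$, the ramification contributions $e(D,D')f(D,D')$ provided by Lemma \ref{lemma:functoriality_of_residue} combine via Lemma \ref{lemma:degree_is_a_sum_multiplicties} to absorb the prefactor $1/\deg\ph$; summing over $i$ and $\epsilon$ with the sign convention that $t_i = 0$ carries positive sign reproduces $\mc W'(dZ)$.

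The main technical obstacle lies in this last matching step: verifying on the nose that $\ord_{D'}(\wt t_i)$ agrees with the Bloch intersection multiplicity, reconciling the Bloch sign convention with the signs coming from the tame symbol formula, and cleanly excising any contributions from the closure locus $\{t_k = 1\}$, which is not a face in Bloch's complex. Once these identifications are made, the equality $d\mc W' = \mc W' d$ is essentially formal, with the vanishing results of Section \ref{sub:sec:vanishing_statements} doing the heavy lifting on all the non-face divisors.
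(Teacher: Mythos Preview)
Your strategy mirrors the paper's almost exactly, and the bookkeeping in the final paragraph is fine. There is, however, a real gap in how you dispose of the contracted divisors. You invoke Proposition \ref{prop:differential_strictly_regular_elements}, but its hypotheses require the base variety $Y$ to be smooth and the element $a$ to be strictly regular \emph{on $Y$}. In your setting the base is $\wt Z$, which has no reason to be smooth, and $\wt t_1\wdw\wt t_n$ has no reason to be strictly regular there: the divisors of the $\wt t_i$ typically fail to be snc, especially near the locus $\{\wt t_k=1\}$. What you have arranged is that the pullback $\ph^*(\wt t_1\wdw\wt t_n)$ is strictly regular on $\wt{\wt Z}$; but Proposition \ref{prop:differential_strictly_regular_elements} applied with $Y=\wt{\wt Z}$ would only kill divisors contracted by a \emph{further} alteration of $\wt{\wt Z}$, not divisors contracted by $\ph$ itself.

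The paper uses Corollary \ref{cor:differential_exceptional_divisor_zero_Bloch} here instead. That result replaces strict regularity by precisely the condition that admissibility supplies: on the open set $U=\wt Z\setminus\bigcup_k\{\wt t_k=1\}$ the zero and pole divisors of the $\wt t_i$ meet in the expected codimension (this is what ``$Z$ intersects all faces properly'' says), while if the image of a contracted divisor lands in $\{\wt t_k=1\}$ then $\ph^*(\wt t_k)|_E\equiv 1$ kills the term directly. With Corollary \ref{cor:differential_exceptional_divisor_zero_Bloch} swapped in for Proposition \ref{prop:differential_strictly_regular_elements}, the rest of your argument goes through; note also that the paper closes the degree count with Lemma \ref{lemma:degree_of_morphism} (Fulton's projection formula) rather than Lemma \ref{lemma:degree_is_a_sum_multiplicties}, again to avoid any smoothness hypothesis on $\wt Z$.
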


Let $Y$ be a variety. For a Cartier divisor $D$ on $Y$, denote the corresponding Weil divisor by $[D]$. 
We need the following lemma:

\begin{lemma}
\label{lemma:degree_of_morphism}
    Let $Y_1$, $Y_2$ be varieties and $\varphi\colon Y_1\to Y_2$ be an alteration. Then for any Cartier divisor $D$ on $Y_2$ we have
    $$\varphi_*[\varphi^*(D)]=(\deg\varphi)[D].$$ 
    In this formula $\ph^*$ is pullback on Cartier divisors and $\ph_*$ is pushforward on Weil divisors.
\end{lemma}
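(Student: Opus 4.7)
The plan is to reduce the identity to the case where $D$ is a prime Weil divisor and then unwind both sides using the local definitions of Cartier pullback and Weil pushforward, closing with Lemma \ref{lemma:degree_is_a_sum_multiplicties}. By bilinearity in $D$ and additivity of Cartier divisors I would first reduce to the case $D=[W]$ for some integral codimension-one subvariety $W\subset Y_2$. Since the claim is local around the generic point of $W$, I may further assume $D$ is given by a single rational function $t$ on $Y_2$ with $\mathrm{ord}_W(t)=1$.

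Next I would compute $[\varphi^\ast(D)]$ as a Weil divisor. By definition,
$$[\varphi^\ast(D)]=\sum_{D'\subset Y_1}\mathrm{ord}_{D'}(\varphi^\ast(t))\cdot [D'],$$
the sum ranging over irreducible divisors of $Y_1$. Divisors $D'$ whose image under $\varphi$ is not contained in $W$ contribute zero, because $\varphi^\ast(t)$ has neither zero nor pole along such $D'$. For $D'\in Div(\varphi,W)$ the coefficient is precisely $e(D',W)$ by the definition of the ramification index recalled just before Lemma \ref{lemma:functoriality_of_residue_2}. The remaining possible contributions come from divisors $D'$ contracted by $\varphi$ whose image happens to lie in $W$.

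Now I would apply $\varphi_\ast$ term by term, using the Fulton convention: for an integral subvariety $D'\subset Y_1$ one has $\varphi_\ast[D']=[\kappa(D'):\kappa(\overline{\varphi(D')})]\cdot [\overline{\varphi(D')}]$ when the field extension is finite, and $\varphi_\ast[D']=0$ otherwise. In our setting contracted divisors are killed because $\dim\overline{\varphi(D')}<\dim D'$, so they land in the wrong dimension and contribute zero to $\varphi_\ast[\varphi^\ast(D)]$. For $D'\in Div(\varphi,W)$ we have $\varphi_\ast[D']=f(D',W)\cdot [W]$, and combining this with the previous step gives
$$\varphi_\ast[\varphi^\ast(D)]=\sum_{D'\in Div(\varphi,W)}e(D',W)\,f(D',W)\cdot [W]=(\deg\varphi)\cdot [D],$$
where the last equality is exactly Lemma \ref{lemma:degree_is_a_sum_multiplicties}.

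The one delicate point, and the step I would flag as the main obstacle, is the treatment of contracted divisors and of singularities: one must know that $\varphi^\ast(D)$ makes sense as a Cartier divisor (which is why the reduction to $D$ locally principal is convenient), that the identification $\mathrm{ord}_{D'}(\varphi^\ast(t))=e(D',W)$ really holds as stated even in the non-smooth setting, and that the vanishing $\varphi_\ast[D']=0$ for contracted $D'$ is the convention we are using. Once these compatibilities are in place, the proof is essentially a bookkeeping application of the degree formula.
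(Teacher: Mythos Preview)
The paper does not actually prove this lemma; it simply cites \cite[Proposition~1.4]{fulton2013intersection}. Your proposal is therefore more detailed than the paper's own treatment, and in fact what you have written is essentially the proof of Fulton's Proposition~1.4: compare coefficients of each prime divisor $W$ on both sides, identify the contribution of each $D'\in Div(\varphi,W)$ as $e(D',W)f(D',W)$, and sum using the degree formula.

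Two small caveats are worth noting. First, your reduction ``to the case $D=[W]$'' conflates Cartier and Weil divisors: on a non-factorial variety a prime Weil divisor need not be Cartier, so you cannot literally decompose $D$ that way. The correct phrasing is to compare the coefficient of each prime $W$ on both sides and work with a local equation $t$ for $D$ near the generic point of $W$, without assuming $\mathrm{ord}_W(t)=1$; the argument goes through with $\mathrm{ord}_W(t)=n$ since $\mathrm{ord}_{D'}(\varphi^*(t))=n\cdot e(D',W)$. Second, the paper's Lemma~\ref{lemma:degree_is_a_sum_multiplicties} is stated for \emph{smooth} varieties, whereas the present lemma is for arbitrary varieties; the degree formula $\deg\varphi=\sum e\,f$ does hold in this generality (with orders defined via lengths), but that is precisely the content of Fulton's Proposition~1.4 rather than a consequence of Lemma~\ref{lemma:degree_is_a_sum_multiplicties} as stated. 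So your argument is correct in substance, but the clean way to close it is to invoke Fulton directly --- which is exactly what the paper does.
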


\begin{proof}
    As the statement is local on $Y_2$, we can assume that $D$ is principal. In this case the statement follows from \cite[Proposition 1.4]{fulton2013intersection}.
\end{proof}

\begin{proof}[The proof of Theorem \ref{th:W(m)_is_morphism}]
    Denote by $\wt t_i\in\k(\ol Z)$ the restriction of the coordinate function $t_i$ to $\ol Z$. Let $a=\wt t_1\wdw \wt t_n$ and $a_i = \wt t_1\wdw \wt t_{i-1}\wedge \wt t_{i+1}\wdw \wt t_n$. Let  $\ph\colon \wt Z\to \overline Z$ be a proper birational morphism with smooth $\wt Z$ such that the element $\wt a:=\ph^*(a)$ is strictly regular. We have
    $$\mc W'(Z)=[\overline Z,a].$$
    \begin{align*}
        &d(\mc W'(Z))=\sum\limits_{D'\subset \wt Z}[D',\ts_{D'}(\wt a)]=\sum\limits_{D'\in \Div(\ph)_{cont}}[D',\ts_{D'}(\wt a)]+\sum\limits_{D'\in \Div(\ph)_{gen}}[D',\ts_{D'}(\wt a)].
    \end{align*}

    In this formula $Div(\ph)_{cont}$ is the set of divisors contracted under $\ph$ and $Div(\ph)_{gen}$ is the set of  divisors which are not contracted under $\ph$. By Lemma \ref{lemma:differential_exceptional_divisor_zero_Bloch}, the first sum is equal to zero. For any divisor $D\subset \overline Z$, the set of divisors $D'\subset \wt Z$ such that $\ph(D')=D$ is denoted by $Div(\ph, D)$. We get
    $$\sum\limits_{D'\in Div(\ph)_{gen}}[D',\ts_{D'}(\wt a)]=\sum\limits_{D\subset \overline Z}\sum\limits_{D'\in \Div(\ph, D)}[D',\ts_{D'}(\wt a)].$$

We recall that $d([Z])$ is defined as
$$\sum_{i=1}^n(-1)^{i+1}(\ts_{i,0}([Z])-\ts_{i,\infty}([Z])),$$
where $\ts_{i,\varepsilon}$ is the cycle associated to the closed subset $Z\cap\{x_i=\varepsilon\}$, considered as a cycle on $(\mb P^1\bs\{1\})^{n-1}$. By the definition of a cycle associated to a closed subset, we get

\begin{align*}
    &\ts_{i,0}([Z])-\ts_{i,\infty}([Z])=\sum_{D\subset Z}\ord_D(\wt t_i)[\wt t_1,\dots, \wt t_{i-1},\wt t_{i+1},\dots, \wt t_n]_D=\\
    &\sum_{D\subset \overline Z}\ord_D(\wt t_i)[\wt t_1,\dots, \wt t_{i-1},\wt t_{i+1},\dots, \wt t_n]_D.
\end{align*}
The last formula holds because if $D\subset \ol Z\bs Z$ then one of $\wt t_i$ is identically equal to $1$ and so the expression $\ord_D(\wt t_i)[\wt t_1,\dots, \wt t_{i-1},\wt t_{i+1},\dots, \wt t_n]_D$ vanishes. We get:
    $$\mc W'(d(Z))=\sum\limits_{i=1}^n\sum\limits_{\substack{D\subset \overline Z\\\ord_D(\wt t_i)\ne 0}}(-1)^{i+1}\ord_{D}(\wt t_i)[D,i_D^*(a_i)].$$
    In this formula $i_D$ is the embedding of $D$ into $\overline Z$. As $Z$ intersects all the faces properly, for any divisor $D\subset \overline Z$ there is at most one $i$, such that $\ord_{D}(\wt t_i)\ne 0$. Fix $D$ and consider the following two cases:
    \begin{enumerate}
        \item For any $i$ we have $\ord_{D}(\wt t_i)=0$. It follows from Lemma \ref{lemma:degree_of_morphism} that for any $D'\in Div(\varphi, D)$ we have $\ord_{D'}(\varphi^*(\wt t_i))=0$. So $\ts_{D'}(\wt a)=0$ and we get $[D',\ts_{D'}(\wt a)]=0$.
        \item There is some $i$ such that $\ord_{D}(\wt t_i)\ne 0$. It remains to check the following formula:
            $$(-1)^{i+1}\ord_{D}(\wt t_i)[D,i_D^*(a_i)]=\sum\limits_{D'\in Div(\ph, D)}[D',\ts_{D'}(\wt a)].$$
    Denote by $g_{D'}$ the natural map $D'\to \ol Z$.
    As $\ord_{D'}(\ph^*(\wt t_j))=0$ for any $j\ne i$, it follows from the definition of tame symbol(see Proposition \ref{prop:tame_symbol_classiacal}) that
    \begin{align*}
        [D',\ts_{D'}(\wt a)]=(-1)^{i+1}\ord_{D'}(\ph^*(\wt t_i))[D', g_{D'}^*(a_i)]=\\
        (-1)^{i+1}\ord_{D'}(\ph^*(\wt t_i))f(D',D)[D,i_D^*(a_i)].
    \end{align*}
    It remains to show that
    $$\ord_{D}(\wt t_i)=\sum\limits_{D'\in Div(\ph, D)}f(D',D)\ord_{D'}(\ph^*(\wt t_i)).$$
    This follows from Lemma \ref{lemma:degree_of_morphism}.
    \end{enumerate}
\end{proof}

\subsection{The generators of $\L(\k,m)$}
\begin{theorem}[Proposition \ref{pr_intro:Lambda_is_generated_by_rationally_connected}]
\label{th:Lambda_is_generated_by_rationally_connected}
    Let $\k$ be an algebraically closed field of characteristic zero. The group $\L(\k, m)_{m-1}/\im(d)$ is generated by the elements of the form $[\P^1, a],$  $a\in \EP{m+1}{\P^1}$.
\end{theorem}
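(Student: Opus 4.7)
The plan is to reduce an arbitrary generator $[Y,a]$ to a sum of elements $[X\times\P^1,w]$ modulo $\im(d)$ by an explicit boundary construction on $Y\times\P^1$.

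\textbf{Step 1: Reductions.} By the defining relations of $\L$ together with de Jong's alteration theorem, we may replace $Y$ by a smooth projective alteration, and by multilinearity of $\Lambda^n$ we assume $a=\alpha_1\wedge\cdots\wedge\alpha_n$ is a pure wedge whose factors have divisors supported on a simple normal crossing divisor of $Y$. Assume $p=\dim Y\geq 1$; the case $p=0$ is either vacuous (no generators $[X\times\P^1,w]$ of the right dimension exist) or follows by Lemma \ref{lemma:about_degenerate_cycles}.

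\textbf{Step 2: Auxiliary construction on $Y\times\P^1$.} Let $\tilde Y=Y\times\P^1$ with coordinate $t$ on $\P^1$ and projection $\pi\cl\tilde Y\to Y$. The idea is to build auxiliary elements $\tilde a\in\Lambda^{n+1}(\k(\tilde Y)^\times)$ involving pullbacks $\pi^*(\alpha_i)$ together with functions of the form $t-\pi^*(\beta)$ and $t\cdot\pi^*(\beta)$, compute $d[\tilde Y,\tilde a]$, and exploit the divisor geometry. Divisors of $\tilde a$ split into (i) \emph{vertical} divisors $D\times\P^1$ for $D\subset Y$ a component of some $(\alpha_i)$, whose tame symbols give $[D\times\P^1,w_D]$---exactly of the desired form with $X=D$ of dimension $p-1$; and (ii) \emph{horizontal} divisors (copies of $Y$: sections $Y\times\{c\}$, $Y\times\{\infty\}$, or graphs $\Gamma_\beta$).

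\textbf{Step 3: Explicit computation.} For the specific ansatz
$$\tilde a=\pi^*(\alpha_1)\wedge\cdots\wedge\pi^*(\alpha_{n-1})\wedge t\wedge(t-\pi^*(\alpha_n)),$$
a direct tame-symbol calculation using Lemma \ref{lemma:leibniz_rule_tame_symbol} (after writing $s=1/t$ at infinity) shows that $\partial_{Y\times\{\infty\}}(\tilde a)=0$, since the relevant restrictions evaluate to $[1]=0$. Meanwhile, the contributions from $Y\times\{0\}$ and $\Gamma_{\alpha_n}$, expanded using $[-\alpha_n]=[-1]+[\alpha_n]$, partially cancel, leaving the residue $(-1)^{n-1}[Y,\alpha_1\wedge\cdots\wedge\alpha_{n-1}\wedge(-1)]$ together with vertical terms of the desired form. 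Hence
$$[Y,\alpha_1\wedge\cdots\wedge\alpha_{n-1}\wedge(-1)]\equiv\sum[X\times\P^1,w_X]\pmod{\im(d)}.$$

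\textbf{Step 4: Iteration and conclusion.} By permuting the roles of the $\alpha_i$, analogous relations allow any single $\alpha_i$ to be replaced by $(-1)$ modulo $\im(d)$ and the desired subgroup. Combining these with further auxiliary ansätze (e.g.\ involving $t\cdot\pi^*(\alpha_i)$ or $t-\pi^*(\alpha_i\alpha_j^{-1})$, producing relations with other constants in the slot), one applies the multilinearity of $\Lambda^n$ (writing $[\alpha_i]=[\alpha_i c^{-1}]+[c]$ for constants $c$) to reduce $[Y,a]$ modulo the desired subgroup to an element $[Y,c_1\wedge\cdots\wedge c_n]$ with all $c_i\in\k^\times$. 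Since $p\geq 1$, this final element is pulled back from $\spec\k$ along the structure morphism and therefore vanishes by Lemma \ref{lemma:about_degenerate_cycles}.

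\textbf{Main obstacle.} The principal difficulty is that the natural construction produces only ``$(-1)$-twisted'' relations, replacing a factor $\alpha_i$ by $-1$, rather than a direct reduction of $[Y,a]$ itself. This is because any rational function $g$ on $\P^1$ has equal numbers of zeros and poles, causing the contributions of $\tilde a=\pi^*(a)\wedge g$ from the two ends of the $\P^1$-factor to cancel in the simplest ansätze. Organizing enough independent auxiliary constructions so that their combined output permits one to eliminate each $\alpha_i$ in turn---while controlling the extra factors $[-1]$, $[c]$, and $[1-\alpha_i/\alpha_j]$ that appear from the additive identities $[\alpha-\beta]=[-1]+[\beta]+[1-\alpha/\beta]$---is the combinatorial heart of the proof.
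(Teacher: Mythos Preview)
Your Step~3 collapses to a tautology: since we work over $\Q$, the element $(-1)$ is $2$-torsion in $\k^\times$ and hence $[-1]=0$ in $\k^\times\otimes_\Z\Q$. Thus the residual horizontal term $(-1)^{n-1}[Y,\alpha_1\wedge\cdots\wedge\alpha_{n-1}\wedge(-1)]$ vanishes identically, and your boundary identity reads $0\equiv\sum[D\times\P^1,w_D]\pmod{\im d}$---a relation among elements already of the desired form, carrying no information about $[Y,a]$. Step~4 is then pure hand-waving: you never exhibit an ansatz on $Y\times\P^1$ whose horizontal part is actually $[Y,a]$, and the ``main obstacle'' you identify is not overcome.

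The difficulty is structural, not combinatorial. On $Y\times\P^1$ every horizontal divisor is a section, hence a copy of $Y$ itself; any $\tilde a$ you write down will have horizontal tame symbols landing back in $\Lambda^n\k(Y)^\times$, and the degree-zero constraint on divisors on $\P^1$ forces the net horizontal contribution to cancel (up to torsion like $[-1]$). There is no induction parameter that decreases. The paper's proof takes a genuinely different route: it realises $Y$ as a hypersurface $\{P=0\}$ in $(\P^1)^{p+1}$ via a degree-$s$ map $Y\dashrightarrow(\P^1)^p$, then computes $d$ of $[(\P^1)^{p+1},\,P\wedge Q_1\wedge\cdots\wedge Q_n]$ where the $Q_j$ restrict to $\alpha_j$ on $Y$. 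Now $Y$ itself appears as one irreducible divisor (contributing $[Y,a]$), the other hypersurface divisors have degree $<s$ over $(\P^1)^p$, and exceptional divisors from resolving are birationally $S'\times\P^1$. This gives an honest induction on $s$ and is the step your argument is missing.
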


Essentially, this statement follows from \cite[Theorem 2.6, (i)]{bolbachan_2023_chow}; however, we will give a proof for completeness.

\begin{proof}

Denote by $A$ the subgroup of $\L(\k,m)_{m-1}$ generated by the elements of the form $[\P^1, w], w\in \L^{m+1}\k(\P^1)^\times$. Let $[X,a]\in \L(\k,m)_{m-1}$. We need to show that $[X,a]$ belongs to the subgroup generated by $A$ and $\im(d)$. 

Let $F=\k(t)$. Choose a polynomial $P$ such that $F[x]/P\cong \k(X)$. We identify $\k(X)$ with $F[x]/P$. In particular, we can consider $a$ as an element in the group $\Lambda^{m+1}(F[x]/P)^\times$.

For an irreducible polynomial $Q\in F[x]$ denote by $\nu(Q)$ the corresponding discrete valuation of the field $F(x)$. Consider the following map
    $$\Lambda^{m+2}F(x)\xrightarrow{(\ts_{\nu(Q)})}\bigoplus\limits_{Q}\Lambda^{m+1}(F[x]/Q)^\times.$$
    The sum is taken over all irreducible polynomials $Q\in F[x]$ and the map is given by tame-symbol maps $\ts_{\nu(Q)}$. By Lemma 2.8 from \cite{bolbachan_2023_chow} (see also \cite{MILNOR1969/70}) this map is surjective. This implies that there is $b\in \Lambda^{m+2}F(x)$ such that $\ts_{\nu(P)}(b)=a$ and for any $Q\ne P$ we have $\ts_{\nu(Q)}(b)=0$.

Let $S=\P^1\times\P^1$. We identify $F(x)$ with $\k(S)$. The discrete valuation $\nu(P)$ corresponds to some curve $X'\subset S$ birationally isomorphic to $X$. By construction, $\ts_{X'}(b)=a$ and for any non-rational prime divisor $D\subset S$ different from $X'$ we have $\ts_{D}(b)=0$.

Let $\varphi\colon \wt S\to S$ be a proper birational morphism such that the element $\varphi^{*}(b)$ is strictly regular. Let us compute $d([\wt S, \varphi^*(b)])$. We get
$$\sum\limits_{D\subset \wt S}[D,\ts_D(\varphi^*(b))]=\sum\limits_{D\in \Div(\varphi)_{cont}}[D,\ts_D(\varphi^*(b))]+[X,a]+\sum\limits_{\substack{D\subset S\\D\ne X'}}[D,\ts_D(b)].$$

In this formula, we denoted by $\Div(\varphi)_{cont}$ the set of prime divisors contracted under $\varphi$. The first sum lies in $A$ because each prime divisor contracted under $\varphi$ is rational. The second sum belongs to $A$ by construction. It follows that $[X,a]$ belongs to the subgroup generated by $A$ and $\im(d)$. 
\end{proof}

\section{The map $\wW$ is an isomorphism}
\label{sec:W_isomorphsim}
Define a subcomplex $\M(\k,m)\subset \N_{\geq m-1}(\k,m)$ as follows. The vector space $\M(\k,m)_{m-1}$ is generated by the elements of the form $[f_1,f_2,\dots, f_n]_X$, where: $X$ is a complete curve, the divisors of the functions $f_1, f_2$ are disjoint and the functions $f_i,i\geq 3$ are constants. The vector space $\M(\k,m)_m$ is defined as $d(\M(\k,m)_{m-1})$. In the case $m=2$ this complex was defined in \cite{cubical1999herbert}. The subcomplex $\M(\k,m)$ is defined for any field. The field $\k$ is not necessary algebraically closed. In the next subsection we will show that $\M(\k,m)$ is acyclic. 

Assume now that $\k$ is algebraically closed. We need the following lemma.
\begin{lemma}
\label{lemma:Beilinson_Soule_vanishing}
Let $X$ be a smooth projective curve, $f_1,f_2\in\k(X)$ and $c_3,\dots, c_{m+1}\in\k^\t$. We have:
        $$[X, f_1\wedge f_2\wedge c_3\wdw c_{m+1}]=0\in \L(\k,m)_{m-1}.$$
\end{lemma}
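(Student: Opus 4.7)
The plan is to reduce the lemma to the weight-one statement $[X,f_1\wedge f_2]=0$ in $\L(\k,1)_0$, and then to dispatch this by combining Weil reciprocity with Beilinson--Soul\'e vanishing in weight one.

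First I would verify that the assignment
\[
[Y,a]\otimes(c_3\wdw c_{m+1})\longmapsto[Y,a\wedge c_3\wdw c_{m+1}]
\]
defines a well-defined $\Q$-linear map $\mu\cl\L(\k,1)_0\otimes_\Q\L^{m-1}(\k^\t\otimes\Q)\to\L(\k,m)_{m-1}$. The three defining relations of $\L(\k,1)_0$ are preserved under wedging with a constant, because alterations $\ph\cl\wt Y\to Y$ fix the image of $\k^\t\hookrightarrow\k(Y)^\t$ (so $\ph^*(c_3\wdw c_{m+1})=c_3\wdw c_{m+1}$) and the exterior product is $\Q$-multilinear. Since $\mu([X,f_1\wedge f_2]\otimes(c_3\wdw c_{m+1}))$ is precisely the element of the lemma, it suffices to show that $[X,f_1\wedge f_2]=0$ in $\L(\k,1)_0$.

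Next I would check that $[X,f_1\wedge f_2]$ is a cocycle. Expanding the differential gives
\[
d[X,f_1\wedge f_2]=\sum_{x\in X^{(0)}}[\spec\k(x),\ts_x(f_1\wedge f_2)].
\]
Using the identity $[\spec\k(x),b]=[\spec\k,N_{\k(x)/\k}(b)]$ in $\L(\k,1)_1$ (obtained by averaging the alteration relation $[\wt Y,\ph^*(a)]=\deg(\ph)[Y,a]$ over a Galois closure of $\k(x)/\k$, in the spirit of Lemma~\ref{lemma:Galois_descent}), this rewrites as
\[
\Bigl[\spec\k,\sum_{x\in X^{(0)}}N_{\k(x)/\k}\bigl(\ts_x(f_1\wedge f_2)\bigr)\Bigr]=[\spec\k,0]=0
\]
by Weil reciprocity on the smooth projective curve $X$; the sign factor $(-1)^{\nu_x(f_1)\nu_x(f_2)}$ in the classical tame symbol is killed after tensoring with $\Q$.

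Finally I would invoke Beilinson--Soul\'e vanishing $H^0(\CH(\k,1))=H^{0,1}(\spec\k,\Q)=0$: combined with the morphism $\W\cl\CH(\k,1)\to\L(\k,1)$ and the observation (via Theorem~\ref{th:Lambda_is_generated_by_rationally_connected} applied with $m=1$) that $\L(\k,1)_0/\im d$ is generated by elements $[\P^1,w]$ all lying in the image of $\W$, one concludes that $\W$ induces an isomorphism on $H^0$. Hence $\ker(d\cl\L(\k,1)_0\to\L(\k,1)_1)=0$, so the cocycle $[X,f_1\wedge f_2]$ must vanish, completing the proof. The main obstacle is precisely this last step: rigorously transferring Beilinson--Soul\'e vanishing from the Chow complex to $\L$ in degree zero, weight one; the preceding reduction to weight one and the Weil-reciprocity computation are essentially formal.
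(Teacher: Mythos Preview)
Your reduction to $m=1$ via the multiplication map $\mu$ is correct and matches the paper's opening move. The Weil-reciprocity verification that $[X,f_1\wedge f_2]$ is a cocycle is also fine (though the paper does not use it).

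The final step, however, has a genuine gap beyond the one you flag. The complex $\L(\k,1)$ is \emph{not} concentrated in degrees $\{0,1\}$: for $j=-1$ one has $p=2,\ n=3$, so $\L(\k,1)_{-1}$ is built from surfaces carrying classes in $\Lambda^3$, and there is a differential $d^{-1}\colon\L(\k,1)_{-1}\to\L(\k,1)_0$. Hence even the vanishing $H^0(\L(\k,1))=0$ would only yield $\ker(d^0)=\im(d^{-1})$, not $\ker(d^0)=0$. Your cocycle would then merely be a boundary from degree $-1$, whereas the lemma asserts vanishing in $\L(\k,m)_{m-1}$ itself (and is used that way, e.g.\ in the proof of Lemma~\ref{lemma:Abel_five_term_relations}). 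Separately, the argument you give for ``$\W$ induces an isomorphism on $H^0$'' is incomplete: surjectivity of $\W$ on chains together with Theorem~\ref{th:Lambda_is_generated_by_rationally_connected} controls $\L(\k,1)_0/\im d$, not $H^0$; one would need extra input such as injectivity of $\W$ in degree~$1$ to lift closed classes to closed cycles.

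The paper's proof takes a completely different, chain-level route and requires no transfer of Beilinson--Soul\'e. After reducing to $m=1$, it passes to a cover $Y\to\P^1$ with $\k(\P^1)\hookrightarrow\k(Y)$ Galois, averages over the Galois group, and invokes rational Galois descent for Milnor $K_2$ to reduce to the two cases $Y=\P^1$ and $f_2=1-f_1$. Both are then dispatched by elementary manipulations with the defining relations of $\L$: automorphisms of $\P^1$ such as $t\mapsto 1/t$ and $t\mapsto 1-t$ force $[\P^1,c\wedge t]=0$ and $[\P^1,t\wedge(1-t)]=0$ directly in $\L(\k,1)_0$, with no boundaries involved.
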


\begin{proof} We can assume that $m=1$. If $f_1$ is constant then the statement follows from Lemma \ref{lemma:Beilinson_Soule_vanishing_weight_zero}. So we can assume that $f_1$ is not constant.

The extension $\k(f_1)\subset\k(X)$ is finite. As any separable extension is contained in some finite Galois extension, there is a finite extension $\k(X)\subset L$ so that the corresponding extension $\k(f_1)\subset L$ is Galois. Choose a smooth projective curve $Y$ together with a morphism $\psi\colon Y\to X$ such that $\k(Y)\cong L$ and the field extension $\k(X)\subset L$ is induced by $\psi$. We get
$$[X,f_1\wedge f_2]=(\deg\psi)^{-1}[Y,\psi^*(f_1)\wedge \psi^*(f_2)].$$

So we can assume that $f_1$ is non-constant and the field extension $\k(f_1)\subset \k(X)$ is Galois. Denote the Galois group by $G$. We have
$$[X,f_1\wedge f_2]=1/|G|\sum\limits_{g\in G}[X,g^*(f_1\wedge f_2)]=1/|G|\sum\limits_{g\in G}[X,f_1\wedge g^*(f_2)]=1/|G|[X,f_1\wedge \wt f_2],$$
where $\wt f_2=\prod\limits_{g\in G}g^*(f_2)$. As $\wt f_2$ is invariant under the Galois group, it follows that there is $h\in\k(t)$ so that $\wt f_2=h(f_1)$. We get

$$1/|G|[X,f_1\wedge \wt f_2]=1/|G|[X,f_1\wedge h(f_1)]=(\deg f_1)/|G|[\P^1,t\wedge h(t)]=[\P^1,t\wedge h(t)].$$

In the last formula we used that $|G|=\deg f_1$.

As $h(t)$ is product of linear factors, it is enough to consider the following two cases:

        \begin{enumerate}

            \item $X=\P^1, f_1=t, f_2=t-a, a\in\k$. Let $\ph$ be an automorphism of $\mb P^1$ given by the formula $\ph(t)=a-t$. We get
            \begin{align*}
                &[\P^1, t\wedge (a-t)]=[\P^1, \ph^*(t\wedge (t-a))]=\\&[\P^1, (a-t)\wedge (-t)]=[\P^1, (t-a)\wedge t]=-[\P^1, t\wedge (t-a)].
            \end{align*}
            So $[\P^1, t\wedge (a-t)]=0$.
            \item $X=\P^1, f_1=t, f_2=c, c\in\k$. In this case the statement follows from Lemma \ref{lemma:Beilinson_Soule_vanishing_weight_zero}.
  \end{enumerate} 

\end{proof}

We recall that we assume that $\k$ is algebraically closed. We recall that $\wL(\k,m)=\tau_{\geq m-1}\L(\k,m)$. Let $\wt \N(\k,m)=(\tau_{\geq m-1}\N(\k,m))/\M(\k,m)$. By the previous lemma, the map $\W$ induces a well-defined map $\wCH(\k,m)\to\wL(\k,m)$ which we denote by $\wW$. Here is the main result of this section:
\begin{theorem}[Theorem \ref{th:W_isomorphism_intro}]
    Let $\k$ be an algebraically closed field. The map $\wW$ is an isomorphism.
\end{theorem}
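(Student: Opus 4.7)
Both $\wCH(\k,m)$ and $\wL(\k,m)$ are concentrated in degrees $m-1$ and $m$, so I must show that $\wW$ is bijective in each of these two degrees. My approach is to construct an explicit inverse to $\wW$ using Proposition~\ref{pr:weakly_adm_is_str_adm_intro} together with de Jong's alteration theorem; surjectivity falls out immediately from the construction, and the substance of the argument lies in checking that the inverse is well-defined on $\wL(\k,m)$.

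\textbf{Surjectivity.} In degree $m$, any $0$-dimensional complete variety over an algebraically closed $\k$ is a finite disjoint union of copies of $\spec\k$, so $\wL(\k,m)_m$ is generated by elements $[\spec\k, c_1\wedge\cdots\wedge c_m]$ with $c_i\in\k^\times$; each such element is the image under $\wW$ of the manifestly admissible cycle $[c_1,\dots,c_m]_{\spec\k}$. In degree $m-1$, a generator $[Y,a]$ with $Y$ a complete curve can, by de Jong's theorem, be replaced by $(1/\deg\ph)[\wt Y,\ph^*(a)]$ for an alteration $\ph\cl \wt Y\to Y$ with $\wt Y$ smooth projective and $\ph^*(a)$ strictly regular. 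Proposition~\ref{pr:weakly_adm_is_str_adm_intro} now furnishes a decomposition $\ph^*(a)=\sum_i\lambda_i g_1^i\wedge\cdots\wedge g_{m+1}^i$ in which each tuple of divisors $(g_1^i),\dots,(g_{m+1}^i)$ has no pairwise common components. Since $\wt Y$ is a curve, this is precisely the condition that the cycle $[g_1^i,\dots,g_{m+1}^i]_{\wt Y}$ meets every codimension-$\geq 2$ face of $\square^{m+1}$ properly, so each such cycle is admissible and the sum $(1/\deg\ph)\sum_i\lambda_i[g_1^i,\dots,g_{m+1}^i]_{\wt Y}$ is a preimage of $[Y,a]$ under $\wW$.

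\textbf{Injectivity and the main obstacle.} The procedure above suggests defining $\wW^{-1}$ in degree $m$ by $c_1\wedge\cdots\wedge c_m\mapsto [c_1,\dots,c_m]_{\spec\k}$ and in degree $m-1$ by $[Y,a]\mapsto(1/\deg\ph)\sum_i\lambda_i[g_1^i,\dots,g_{m+1}^i]_{\wt Y}$ using the same decomposition. The degree-$m$ case works immediately: the multilinearity identities defining $\Lambda^m(\k^\times)$ are exactly the relations cut out by $\M(\k,m)_m$. In degree $m-1$, one must verify independence of (a) the alteration $\ph$, (b) the additive and $\Q$-linear relations in $a$, and (c) the particular decomposition supplied by Proposition~\ref{pr:weakly_adm_is_str_adm_intro}. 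Item (a) is a pushforward compatibility for admissible cycles on curves, and (b) is immediate. Item (c) is the main obstacle: it reduces to showing that whenever $\sum_k\nu_k h_1^k\wedge\cdots\wedge h_{m+1}^k=0$ in $\Lambda^{m+1}(\k(\wt Y)^\times)_\Q$ with each summand in the ``no common components'' form, the cycle $\sum_k\nu_k[h_1^k,\dots,h_{m+1}^k]_{\wt Y}$ lies in $\M(\k,m)+\im(d)$ inside $\tau_{\geq m-1}\N(\k,m)_{m-1}$. The defining relations of $\Lambda^{m+1}(\k(\wt Y)^\times)$ are antisymmetry, already absorbed into the alternating structure of $\N$, together with multilinearity in each slot; the task therefore reduces to realizing each multilinearity identity $[gh,f_2,\dots,f_{m+1}]_{\wt Y}=[g,f_2,\dots,f_{m+1}]_{\wt Y}+[h,f_2,\dots,f_{m+1}]_{\wt Y}$ between admissible cycles as an element of $\M(\k,m)$. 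The plan is to apply Proposition~\ref{pr:weakly_adm_is_str_adm_intro} iteratively to split the $f_2,\dots,f_{m+1}$ into constants and the remaining slot functions into ones with $\k$-rational zero and pole loci, placing the identity exactly in the generating situation of $\M(\k,m)_{m-1}$; the main technical difficulty is keeping admissibility intact at every step of the iteration.
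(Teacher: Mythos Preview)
Your surjectivity argument is essentially the paper's and is fine.

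For injectivity, the proposal has a real gap at item~(c). Your plan is to ``apply Proposition~\ref{pr:weakly_adm_is_str_adm_intro} iteratively to split the $f_2,\dots,f_{m+1}$ into constants'' so that the multilinearity difference lands in the generating set of $\M(\k,m)_{m-1}$. But Proposition~\ref{pr:weakly_adm_is_str_adm_intro} does not produce constants: it only rewrites $f_1\wedge\cdots\wedge f_n$ as a combination of $g_1\wedge\cdots\wedge g_n$ whose divisors have no common components. There is no mechanism in your outline for turning non-constant $f_i$ into constants, and without that the identity
\[
[gh,f_2,\dots,f_{m+1}]_{\wt Y}-[g,f_2,\dots,f_{m+1}]_{\wt Y}-[h,f_2,\dots,f_{m+1}]_{\wt Y}
\]
does not lie in $\M(\k,m)_{m-1}$; it lies only in $\M(\k,m)_{m-1}+\im(d)$, and that requires a genuine construction.

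The paper supplies the two ingredients you are missing. First, Proposition~\ref{prop:multiplicative_relations_Chow} builds an explicit cycle on $\wt Y\times\P^1$ whose boundary is the multilinearity difference above modulo $\M(\k,m)$; most of the work is a delicate admissibility analysis of this cycle. Second, to package these relations into a well-defined inverse one needs a presentation of $\EP{m+1}{X}$ by tuples with pairwise disjoint divisors modulo only antisymmetry and those multilinearity moves that preserve disjointness: this is Theorem~\ref{th:theta_is_an_isomorphsim}, whose proof (Lemma~\ref{lemma:first} and what follows) is a nontrivial inductive construction, not a formality. Your items (a)--(c) tacitly assume both of these results without naming or proving them.

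There is a further omission: your candidate inverse is defined on generators of $\L(\k,m)_{m-1}$, but $\wL(\k,m)_{m-1}$ is that group modulo the image of $d$ from degree $m-2$. You never check vanishing on $\im(d)$. The paper does this by invoking surjectivity of $\mc W$ in degree $m-2$ (Theorem~\ref{th:W_is_surjective} is stated and proved for all degrees, not only $m-1$ and $m$): if $a=\mc W(a')$ with $a'\in\N(\k,m)_{m-2}$, then $\lambda(d a)=\lambda(\mc W(d a'))=\pi(d a')=0$ in $\wCH(\k,m)_{m-1}$.
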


\subsection{An acyclic subcomplex}
In this subsection we do not assume $\k$ to be algebraically closed. The goal of this subsection is to prove that $\M(\k,m)$ is acyclic.

\begin{lemma}
\label{lemma:homotopy_left_inverse}
   Assume that $g_1,g_2,c_2,\dots, c_m\in\k^\t$. The following cycle
$$[g_1g_2,c_2,\dots, c_m]_{\spec\k}-[g_1,c_2,\dots, c_m]_{\spec\k}-[g_2,c_2,\dots, c_m]_{\spec\k}$$
is contained in $\M(\k,m)_m$. 
\end{lemma}
\begin{proof}
    Consider the following cycle:
$$Z=\left[\dfrac{(t-g_1g_2)(t-1)}{(t-g_1)(t-g_2)},t,c_2,\dots, c_m\right]_{\P^1}.$$
We claim that the boundary of this cycle is equal to the element stated in the lemma. Indeed, the cycle $Z$ does not intersect faces given by the equations $z_i=0,z_i=\infty$ when $i>1$. The intersection of this cycle with the face given by $z_1=0$ is given by
$$[g_1g_2,c_2,\dots, c_m]_{\spec\k}.$$
The intersection of $Z$ with the face given by $z_1=\infty$ is given by
$$[g_1,c_2,\dots, c_m]_{\spec\k}+[g_2,c_2,\dots, c_m]_{\spec\k}$$
the statement follows from the definition of the differential in the complex $\CH(\k,m)$.
\end{proof}

\begin{proposition}
\label{lemma:M_is_acyclic}
    The complex $\M(\k,m)$ is acyclic.
\end{proposition}

\begin{proof}
Let us first explain why higher Chow groups satisfy Galois descent. Let $\ph\cl K\emb F$ be a (not necessarily finite) Galois extension with Galois group $G$. We need to prove the following statement
    $$H^i(\N(\k,m))\cong H^i(\N(F,m))^G.$$
It is enough to prove that 

    $$\N(\k,m)\cong \N(F,m)^G.$$
We have 
    $$\N(F,m)=\colim_{F_0} \N(F_0,m),$$
    where the colimit is taken over the set of subextension $\k\subset F_0\subset F$ such that $F_0/\k$ is finite and Galois. This reduces the statement to the particular case when $F/K$ is finite. Denote by $\psi\colon \spec F\to \spec \k$ the natural map. As $\psi$ is proper and flat we have canonical maps $\psi_*\colon N(F,m)\to \N(\k,m)$ and $\psi^*\colon \N(\k,m)\to\N(F,m)$ (see \cite{bloch1986algebraic}). The Galois descent follows from the following two formulas:
    $$\psi^*\psi_*=\sum\limits_{g\in G}g_*$$
    $$\psi_*\psi^*=[F:K].$$

The above discussion shows that we can assume that $\k$ is algebraically closed. Let $$A=\bigoplus\limits_{m=0}^\infty \mc N(\k,m).$$
There is a natural structure of graded commutative dg-algebra on $A$ \cite{bloch_rriz_1994_mixed}, where $\mc N(\k,m)_j$ sits in degree $j$. Let $Z_1\subset \square^{2m_1-j_1}$ and $Z_2\subset \square^{2m_2-j_2}$. The multiplication is given by the formula $[Z_1]\cdot [Z_2]=alt_{2(m_1+m_2)-j_1-j_2}([Z_1\times Z_2])$. 

If $C_*$ is a chain complex, denote by $S^m C_*$ its $m$-th symmetric power. It is defined as the quotient of $C_*^{\otimes m}$ by the relations of the form $$a_1\otimes\dots \otimes a_{k+1}\otimes a_{k}\otimes\dots \otimes a_m-(-1)^{|a_k||a_{k+1}|}a_1\otimes\dots\otimes a_{k}\otimes a_{k+1}\otimes\dots \otimes a_m=0.$$
In the last formula we denoted by $|a_i|$ the degree of the element $a_i$. 

Denote by $V$ the chain complex $S^m\N(\k,1)$. There is a natural map $S^m\mc N(\k,1)\to \mc N(\k,m)$ given by multiplication. By \cite[Theorem 6.1]{bloch1986algebraic} we have $H^j(\N(\k,1))=0$ for $j\ne 1$ and $H^1(\N(\k,1))=\k^\times$. It follows from the Kunneth formula that $H^j(V)=0$ for $j\ne m$ and $H^m(V)=\Lambda^m \k^\times$. Define the complex $V'$ as follows. If $i< m$ we set $V'_i=V_i$. If $i=m$ we define $V'_m$ as the kernel of the natural map $V_m\to \Lambda^m\k^\times$. It follows that the complex $V'$ is acyclic. 

    Denote by $\Q[\k^\times]$ a free vector space generated by all elements of $\k^\times$.  We have
    $$V'_{m-1}\cong \mc N(\k,1)_0\otimes \Lambda^{m-1}\Q[\k^\times],\quad V'_{m}\cong \ker(\Lambda^m\Q[\k^\times]\to \Lambda^m\k^\times).$$
    Consider the natural map $\tau_{\geq m-1}V'\to \N_{\geq m-1}(\k,m)$ given by multiplication. Let us show that the image of this map is contained in $\M(\k,m)$. In degree $m-1$ this is straightforward. In degree $m$ this follows from the previous lemma(or from the fact that any element in $V'_m$ is exact).
    
    Denote by $\varphi$ the natural map $V'_{\geq m-1}\to \M(\k,m)$. It is easy to see that $\varphi$ is surjective in degree $m-1$ and injective in degree $m$. As $H^m(\M(\k,m))=0$ it follows that $\varphi$ is isomorphism. As $V'$ is acyclic it follows that $\M(\k,m)$ is acyclic as well.
\end{proof}

We set $\wCH(\k,m)=\tau_{\geq m-1}(\CH(\k,m))/\M(\k,m)$.

\begin{lemma}
\label{lemma:Galois_descent}
    The cohomology $H^i(\wCH(\k,m))$ satisfies Galois descent.
\end{lemma}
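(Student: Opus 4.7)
The plan is to reduce the statement to the corresponding Galois descent property for the motivic cohomology of $\spec\k$, which is classical with $\Q$-coefficients.

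First, I would use that $\M(\k,m)$ is acyclic (the preceding proposition). The tautological short exact sequence of complexes
$$0\to \M(\k,m)\to \tau_{\geq m-1}\CH(\k,m)\to \wCH(\k,m)\to 0$$
induces a long exact sequence in cohomology in which the terms $H^*(\M(\k,m))$ vanish. Hence the quotient map induces an isomorphism $H^i(\tau_{\geq m-1}\CH(\k,m))\xrightarrow{\sim} H^i(\wCH(\k,m))$ for all $i$. For $i\geq m-1$ this identifies $H^i(\wCH(\k,m))$ with $H^i(\CH(\k,m))=H^{i,m}(\spec\k,\Q)$, and for $i<m-1$ both sides vanish. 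This formation is clearly natural in $\k$.

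Second, I would invoke Galois descent for higher Chow groups with rational coefficients. For a finite separable extension $L/\k$ of degree $d$, the works of Bloch \cite{bloch1986algebraic} and Rudenko \cite{rudenko2021strong} provide a pullback $\pi^*\cl \CH(\k,m)\to \CH(L,m)$ and a norm $N_{L/\k}\cl \CH(L,m)\to \CH(\k,m)$ satisfying $N_{L/\k}\circ \pi^*=d\cdot \mathrm{id}$ and, when $L/\k$ is Galois with group $G$, $\pi^*\circ N_{L/\k}=\sum_{g\in G}g$ on the image. Passing to cohomology and using that we work with $\Q$-coefficients, we obtain a canonical isomorphism $H^{i,m}(\spec\k,\Q)\xrightarrow{\sim} H^{i,m}(\spec L,\Q)^G$ for any finite Galois $L/\k$, and, passing to the colimit over finite Galois subextensions of $\ol\k/\k$, an isomorphism $H^{i,m}(\spec\k,\Q)\xrightarrow{\sim} H^{i,m}(\spec\ol\k,\Q)^{\mathrm{Gal}(\ol\k/\k)}$.

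Combining the two steps then yields the stated Galois descent for $H^i(\wCH(\k,m))$. The main subtlety to verify is step two, namely that the norm/pullback formalism on higher Chow groups is compatible with Galois averaging after rationalization; but this is precisely the content cited from \cite{bloch1986algebraic,rudenko2021strong}, so no further work is required. Note that one does not need to check that $\pi^*$ and $N_{L/\k}$ preserve the subcomplex $\M(\cdot,m)$, because the acyclicity of $\M$ allows us to perform the descent argument on the level of $H^{i,m}(\spec\k,\Q)$ directly.
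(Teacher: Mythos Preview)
Your proposal is correct and follows essentially the same route as the paper: both use the acyclicity of $\M(\k,m)$ to identify $H^i(\wCH(\k,m))$ with $H^i(\tau_{\geq m-1}\CH(\k,m))$, and then invoke the known Galois descent for rational higher Chow groups. The paper packages this as a commutative square, explicitly noting that $\ph^*(\M(K,m))\subset\M(F,m)$ so that the pullback descends to $\wCH$; your closing remark that one need not check this for $\pi^*$ is slightly off, since without it the comparison map $H^i(\wCH(K,m))\to H^i(\wCH(F,m))^G$ is not a priori defined (your earlier claim that ``this formation is clearly natural in $\k$'' already presupposes it). The verification is of course immediate from the generators of $\M$.
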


\begin{proof}

Let $F/\k$ be an arbitrary Galois extension. We have the natural map $\ph^*\cl \tau_{\geq m-1}\CH(K,m)\to \tau_{\geq m-1}\CH(F,m)$. Moreover $\ph^*(\M(K,m))\subset \M(F,m)$. So we get the following diagram:
    \begin{equation}
        \begin{tikzcd}
             H^i(\tau_{\geq m-1}\CH(K,m)) \ar[r]\ar[d] & H^i(\tau_{\geq m-1}\CH(F,m))^G\ar[d]\\
             H^i(\wCH(K,m)) \ar[r]& H^i(\wCH(F,m))^G
        \end{tikzcd}
        \end{equation}
    Vertical arrows are isomorphisms by Proposition \ref{lemma:M_is_acyclic}. Top arrow is isomorphism because higher Chow groups rationally satisfies Galois descent(see the beginning of the previous proof). This implies that the bottom arrow is an isomorphism as well.
\end{proof}


\subsection{Multiplicative relations}
Starting from here till the end of Section \ref{sec:strong_suslin_reciprocity_law} we assume that $\k$ is algebraically closed.
\begin{proposition}
\label{prop:multiplicative_relations_Chow}
    Let $X$ be a smooth projective curve and $g_1, g_2, f_3\dots, f_{m+2}\in \k(X)$. Set $g_3=g_1g_2$. Assume that the cycles $W_i=[g_i,f_3,\dots, f_{m+2}]_X$ are admissible. Then $W_3=W_1+W_2$ in $\wCH(\k,m)_{m-1}$.
\end{proposition}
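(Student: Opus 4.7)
The plan is to exhibit an explicit $2$-chain $\Sigma \in \N(\k, m)_{m-2}$ whose boundary equals $W_3 - W_1 - W_2$ modulo an element of $\M(\k, m)_{m-1}$, which yields the claimed identity in $\wCH(\k, m)_{m-1}$. The cocycle is the standard two-factor multiplicativity witness, modeled directly on the homotopy $h$ used above in the proof of acyclicity of $\M'(\k, m)$: on $X \t \P^1$ with coordinate $t$ on $\P^1$, set
$$h(x, t) = \frac{(t - g_1 g_2)(t - 1)}{(t - g_1)(t - g_2)}$$
and define $\Sigma := [h, t, f_3, \dots, f_{n+1}]_{X \t \P^1}$. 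This has dimension $2$ and codimension $m$ inside $\square^{m+2}$, so it is indeed a chain of the required degree.

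Assuming admissibility of $\Sigma$, I would compute $d\Sigma$ face by face in $\square^{m+2}$. For the first coordinate, $h = 0$ on $X \t \P^1$ is cut out by $t = g_1 g_2$ or $t = 1$; the branch $t = 1$ falls outside $\square$, and the remaining branch contributes $W_3$. Similarly $h = \infty$ is cut out by $t = g_1$ or $t = g_2$, contributing $W_1 + W_2$. Both faces $t_2 = 0$ and $t_2 = \infty$ force $h \equiv 1$ and so lie outside $\square$; they contribute nothing. For each coordinate $i \geq 3$, restriction to the finite set of zeros or poles of $f_i$ on $X$ yields cycles of the form
$$\left[h(x_\alpha, \cdot),\, t,\, f_3(x_\alpha),\, \dots,\, \widehat{f_i(x_\alpha)},\, \dots,\, f_{n+1}(x_\alpha)\right]_{\P^1}$$
with $c_j := f_j(x_\alpha) \in \k^\t$ for $j \neq i$. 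The divisors on $\P^1$ of the first two entries, namely $[g_1(x_\alpha) g_2(x_\alpha)] + [1] - [g_1(x_\alpha)] - [g_2(x_\alpha)]$ and $[0] - [\infty]$, are disjoint in the generic case, so each such cycle belongs to $\M(\k, m)_{m-1}$ by its very definition. Summing the signed face contributions gives $d\Sigma \equiv W_3 - W_1 - W_2 \pmod{\M(\k, m)_{m-1}}$, which is the desired identity.

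The main obstacle is verifying admissibility of $\Sigma$ and controlling the degenerate boundary configurations: the analysis above tacitly assumes that $g_1(x_\alpha), g_2(x_\alpha)$, and their product avoid $\{0, 1, \infty\}$ for each $x_\alpha$ in the support of some $(f_i)$, and that no accidental coincidences between the zero/pole loci of $g_1, g_2, f_3, \dots, f_{n+1}$ cause improper intersection on $X \t \P^1$. Admissibility of $W_1, W_2, W_3$ already forces the zeros and poles of the $f_j$ to be mutually disjoint on $X$, which handles most of the intersection conditions involving the $f_j$-coordinates. The remaining degeneracies I would clear by a local perturbation: multiply $g_1, g_2$ near the problematic points by suitable generic units chosen so that the modified data is admissible and so that the passage between the original and perturbed cycles is itself accounted for inside $\M(\k, m)_{m-1}$. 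The weight-$2$ instance of this construction is carried out explicitly in \cite{cubical1999herbert}, and the general $m \geq 2$ case follows by propagating the spectator coordinates $f_3, \dots, f_{n+1}$ through the same argument.
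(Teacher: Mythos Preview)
Your approach is exactly the paper's: the same witness cycle
\[
\Sigma=\left[\dfrac{(t-g_1g_2)(t-1)}{(t-g_1)(t-g_2)},\,t,\,f_3,\dots,f_{n+1}\right]_{X\times\P^1}
\]
and the same face-by-face computation of $d\Sigma$. The difference is that the paper carries out what you correctly identify as ``the main obstacle'': it proves directly, by a lengthy case analysis (the lemma immediately preceding the proof), that $\Sigma$ is admissible under the sole hypothesis that $W_1,W_2,W_3$ are. You skip this and offer a perturbation instead.

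That perturbation is where the real gap lies. Replacing $g_i$ by $g_iu_i$ with $u_i$ generic and claiming the passage lies in $\M(\k,m)_{m-1}$ amounts to asserting
\[
[g_iu_i,f_3,\dots,f_{n+1}]_X-[g_i,f_3,\dots,f_{n+1}]_X-[u_i,f_3,\dots,f_{n+1}]_X\in\M(\k,m)_{m-1},
\]
which is precisely an instance of the proposition being proved. One could try to bootstrap by first treating the special case where one factor is generic, but that requires its own admissibility argument and is not what the paper does. Also, your claim that admissibility of the $W_i$ ``forces the zeros and poles of the $f_j$ to be mutually disjoint on $X$'' is not correct: admissibility only says that at a point of $X$ mapping into $\square^{n+1}$, at most one coordinate lies in $\{0,\infty\}$; there is no constraint at points where some coordinate equals $1$. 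Finally, your boundary analysis omits the contributions coming from the indeterminacy locus of $h$ on $X\times\P^1$ (e.g.\ where $g_i(x)=g_j(x)$ or $g_i(x)=1$); the paper checks separately that the corresponding exceptional divisors contribute cycles with all but the first coordinate constant, hence elements of $\M(\k,m)$.
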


In the case $m=2$ this proposition was stated in \cite[Lemma 2.2]{cubical1999herbert}.

\begin{lemma}
    Let $(x,t)$ be coordinates on $X\t\P^1$. If the conditions of the previous theorem hold, then the following cycle
        $$\left[\dfrac{(t-g_3(x))(t-1)}{(t-g_1(x))(t-g_2(x))},t, f_3(x),\dots, f_{m+2}(x)\right]_{X\t\P^1}$$
    is admissible.
\end{lemma}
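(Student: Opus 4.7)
The plan is to verify admissibility face by face. Set $\Psi(x,t) = (t-g_3(x))(t-1)/\bigl((t-g_1(x))(t-g_2(x))\bigr)$, so that the cycle in question is (up to multiplicity) the closure $Z \subset \square^{n+1}$ of the image of the rational map
$$\pi\cl X\t\P^1 \ard (\P^1)^{n+1}, \qquad (x,t)\mapsto \bigl(\Psi(x,t),\, t,\, f_3(x),\ldots,f_{n+1}(x)\bigr),$$
whose image has generic dimension $2$. Admissibility amounts to $\dim(Z\cap F)\leq 2-\operatorname{codim} F$ for every face $F$ of $\square^{n+1}$. The key identity, forced by $g_3=g_1g_2$, is $\Psi|_{t=0}=g_3/(g_1g_2)=1$, and likewise $\Psi|_{t=\infty}=1$ (the leading coefficients in $t$ match). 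Since $z_1=1$ is excluded from $\square$, the generic part of the image avoids the forbidden faces $\{z_2=0\}$ and $\{z_2=\infty\}$ entirely, and contributions to $Z\cap\{z_2\in\{0,\infty\}\}$ arise only from resolving the indeterminacy of $\pi$, which is concentrated at the finitely many points $(x_0,0)$ with $g_3(x_0)=0$ and at the analogous points over $t=\infty$.

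First I would dispatch the codim-$1$ faces. For $\{z_j=\epsilon\}$ with $j\geq 3$ and $\epsilon\in\{0,\infty\}$, the preimage under $\pi$ is $\{f_j=\epsilon\}\t\P^1_t$, a finite union of $\P^1$-fibers whose image in $Z$ is $1$-dimensional. For $\{z_1=0\}$ the preimage is the graph $\{t=g_3(x)\}$ (we discard $t=1$), $1$-dimensional. For $\{z_1=\infty\}$ the preimage is $\{t=g_1(x)\}\cup\{t=g_2(x)\}$, again $1$-dimensional. For $\{z_2=\epsilon\}$ the intersection is contained in the image of the exceptional divisors over the finite indeterminacy set and is at most $1$-dimensional. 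All codim-$1$ checks therefore meet the bound.

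Next I would handle codim-$\geq 2$ faces. For $\{z_{j_1}=\epsilon_1,z_{j_2}=\epsilon_2\}$ with $j_1,j_2\geq 3$ distinct, the preimage forces $f_{j_1}$ and $f_{j_2}$ to vanish or have poles at the same $x$, which is empty by admissibility of any $W_i$. For $\{z_2=\epsilon,z_j=\epsilon'\}$ with $j\geq 3$, the exceptional divisor over an indeterminacy point $(x_0,0)$ (with $g_3(x_0)=0$) has constant $z_j$-coordinate $f_j(x_0)$; admissibility of $W_3=[g_3,f_3,\ldots,f_{n+1}]_X$ forces $f_j(x_0)\notin\{0,\infty\}$, so the intersection is empty. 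The combinations $\{z_1=\epsilon,z_2=\epsilon'\}$ and $\{z_1=\epsilon,z_j=\epsilon'\}$ reduce, via the explicit formulas for the preimages of $\{z_1=\epsilon\}$ from the previous paragraph, to finite sets of points, again meeting the dim-$0$ bound. Codim-$3$ and higher faces are then automatic, since any such face already contains an empty pair.

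The main obstacle is the bookkeeping of indeterminacy: I must verify that resolving $\pi$ by blowups of the points $(x_0,0)$ with $g_3(x_0)=0$ (and the analogous points on $t=\infty$) contributes only exceptional $\P^1$'s whose images into $(\P^1)^{n+1}$ have $z_j$-coordinate exactly $f_j(x_0)$ for $j\geq 3$. This is precisely where the admissibility hypothesis on the third cycle $W_3$, not merely $W_1$ and $W_2$, is essential. Once this bookkeeping is carried out, every face-intersection dimension check follows uniformly from the admissibility of $W_1,W_2,W_3$ together with the key identity $\Psi|_{t\in\{0,\infty\}}=1$.
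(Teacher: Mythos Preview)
Your plan is in the right spirit and shares the paper's key observation that $\Psi|_{t\in\{0,\infty\}}=1$. The paper organizes the check by divisors on a resolution $\wt S\to S$, you organize by faces; these are two sides of the same verification. But your execution has a real gap.

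The claim ``admissibility of $W_3$ forces $f_j(x_0)\notin\{0,\infty\}$'' is false as stated. Admissibility is a condition on $W_i\cap\square^n$, so it gives no constraint at points of $W_i$ where some coordinate equals $1$. Concretely, if $g_3(x_0)=0$ and $f_j(x_0)\in\{0,\infty\}$, admissibility of $W_3$ only tells you that \emph{either} all other $f_l(x_0)\in\k^\times\setminus\{1\}$ \emph{or} $f_l(x_0)=1$ for some $l$. You must treat the second branch: then the exceptional divisor has $z_l=1$ and lies outside $\square^{n+1}$. The same issue bites in your $\{z_{j_1},z_{j_2}\}$ case (both $\ge 3$): admissibility of each $W_i$ may only yield $g_i(x_0)=1$; if this happens for all $i$ you need the separate observation that $\Psi(x_0,\cdot)\equiv 1$, so $z_1=1$ on the whole fiber. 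The paper handles exactly these two escape clauses (its cases ``$f_j(x_0)=1$'' and ``$g_j(x_0)=1$ for all $j$''); your sketch omits them, so the argument as written does not close.

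Your description of the indeterminacy locus is also incomplete: at $t=0$ one needs $g_3(x_0)\in\{0,\infty\}$, not just $g_3(x_0)=0$; and $\Psi$ is indeterminate at many other points, e.g.\ at $(x_0,t_0)$ with $g_2(x_0)=1$ and $t_0=g_1(x_0)=g_3(x_0)$, or with $g_1(x_0)=1$ and $t_0=1$. Exceptional curves over such points can land on faces $\{z_j=\epsilon'\}$ (for $j\ge 3$) when $f_j(x_0)\in\{0,\infty\}$, and you need the case analysis above (invoking the right $W_i$ and the escape clauses) to dispose of them. This is precisely the ``bookkeeping'' you flag at the end, and it is where most of the paper's proof lives; without it your dimension counts for the $\{z_1,z_j\}$ and $\{z_1,z_2\}$ faces are assertions rather than arguments.
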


\begin{proof}
We set $f_1=(t-g_3)(t-1)/((t-g_1)(t-g_2))$ and $f_2=t$. Let $S=X\t\P^1$. Denote by $\psi$ a rational map $S\to\mb (\P^1)^{n+1}$ given by the formula $z\mapsto (f_1(x,t),\dots, f_{n+1}(x,t))$. Denote by $Z$ the closure of its image. If $\dim Z<2$, there is nothing to prove. We can assume that $\dim Z=2$. Let $\ph\colon \wt S\to S$ be a resolution of the indeterminacy locus of $\psi$. In particular, the inverse image of the indeterminacy locus of $\psi$ is a snc divisor on $\wt S$. Set $\psi'=\psi\circ\varphi$. This is a regular map $\wt S\to (\P^1)^{n+1}$. We can assume that for any prime divisor $D\subset \wt S$ contracted under $\ph$, the point $\ph(D)$ lies on the indeterminate locus of $f_1$. Denote by $\pi_i\colon Z\to \P^1$ the natural projections and set $f_i'=f_i\circ \ph$. Let $D\subset S$ be a prime divisor. Denote by $D'$ the strict transform of $D$ under $\ph$. If $D'$ is contracted under $\psi'$, we set $\psi(D)=0$. Otherwise, we set $\psi(D)=\psi'(D')$ and call $\psi(D)$ \emph{the image} of $D$ under $\psi$.

Let us first check that $Z$ intersects faces of codimension $2$ properly.

Let $D\subset \wt S$ be a prime divisor.  We will say that $D$ is good if at least one of the following statements hold:
\begin{enumerate}
    \item For some $i$, the restriction of $f_i'$ to $D$ is equal to $1$.
   \item For any $i$ we have $\ord_{D}(f_i')=0$
    \item There is precisely one $i$ such that $\ord_D(f_i')\ne 0$ and for any $x\in D$ at least one of the following conditions holds:
    \begin{enumerate}
       \item there is some $j\ne i$ such that  $f_j'(x)=1$ (we recall that for any $j$, the function $f_j'$ is regular on $\wt S$ as a map to $\P^1$).
        \item There is $j_0\ne i$, such that for any $j\not\in\{i,j_0\}$ we have $f_j'(x)\not\in\{0,\infty\}$.
    \end{enumerate}
\end{enumerate}

To prove the statement of the lemma it is enough to prove that any divisor $D\subset\wt S$, not contracted under $\psi'$ is good. Indeed, assume that there is some curve $D\subset Z\cap\square^{n+1}$ lying on two faces. Denote by $D'$ the closure of $D$ in $(\P^1)^{n+1}$. Then the proper transform $\wt D$ of $D'$ under $\psi'$ would satisfy $\ord_{\wt D}(f_i'),\ord_{\wt D}(f_j')\ne 0$ for some $i\ne j$. So $\wt D$ wouldn't be good. A contradiction. Similarly, using the third part of the definition of a good divisor, one can show that a point lying in $Z\cap\square^{n+1}$ cannot lie on three different faces. Starting from here we assume that any divisor $D\subset \wt S$ is not contracted under $\psi'$.

 Let $D$ be some irreducible component of the divisor $(f_1')$. The divisor $D$ is contracted under the map $f_1'$. So $D$ is not contracted under $f_i'$ for some $i>1$. This implies that $D'$ is not contracted under $\ph$. 

Let us prove that any divisor $D\subset \wt S$ not contracted under $\psi'$ is good. First, consider the case when $D$ is contracted under $\ph$. Let $(x_0,t_0)=\ph(D)$.  Assume that there is $i$ such that $\ord_D(f_i')\ne 0$. By what we have said before we know that $i>1$. Consider two cases:
\begin{description}
\item[Case $i=2$] We know that $t_0\in\{0,\infty\}$. As $(x_0,t_0)$ lies on the indeterminacy locus of $f_1$ it is easy to see that $x_0$ lies on the divisor of $g_j$ for some $j$. As the cycles $W_i$ are admissible, one of the following cases occurs:
\begin{enumerate}
    \item For some $j\geq 3$ we have $f_j(x_0)=1$. In this case the restriction of $f_j'$ to $D$ is equal to $1$ and $D$ is good.
    \item For any $j\geq 3$ we have $f_j(x_0)\in \k^\t\bs\{1\}$. In this case, the restrictions of $f_j', j\geq 3$ to $D$ are non-zero constants. As we have said before, $\ord_{D}(f_1')=0$. This implies that $D$ is good.
\end{enumerate}
    \item[Case $i\geq 3$] Without loss of generality we can assume that $i=3$. We know that $x_0$ lies on the divisor of $f_3$. As the cycles $W_i$ are admissible, one of the following cases occurs:
    \begin{enumerate}
    \item For any $j$ we have $g_j(x_0)=1$. As $f_1$ is undefined at $(x_0, t_0)$, we have $t_0=1$. So the restriction of $f_2'$ to $D$ is equal to $1$ and $D$ is good.
        \item For some $j>3$ we have $f_j(x_0)=1$. In this case the restriction of $f_j'$ to $D$ is equal to $1$ and $D$ is good.
        \item For any $j>3$ we have $f_j(x_0)\in \k^\t\bs\{1\}$. This implies that $g_j(x_0)\in\k^\t$. As $(x_0,t_0)$ lies on the indeterminacy locus of $f_1$ we know that $t_0\in\k^\t$. So $\ord_D(f_1')=0$ and the functions $\res{f_2'}{D}, \res{f_j'}{D}, j\geq 4$ are non-zero constants. This implies that $D$ is good.
    \end{enumerate}
\end{description}

So we can assume that $D$ is not contracted under $\ph$. Let $D'=\ph(D)$. Denote by $T_i$ the closure of the set given by the equation $f_i=1$ on $S$. Let $T=\cup T_i, T'=\cup_{i>1}T_i$ and $U=S\bs T, U'=S\bs T'$. If $D'$ is contained in $T$ then the restriction of one of $f_i'$ to $D$ is equal to $1$ and so $D$ is good. So, we can assume that $D'\cap U\ne\varnothing$. If for any $i$ we have $\ord_D(f_i')=0$ then $D$ is good. So, we can assume that there is some $i$ such that $\ord_D(f_i')\ne 0$.

 Let us prove that for any $j\ne i$, we have $\ord_{D}(f_j')=0$. This follows from the fact that the divisors of rational functions $f_i$ on $U$ have no common components. 

Let $z\in D$ and $z'=\ph(z)$. It remains to check that one of the conditions (a) or (b) from the definition of a good divisor holds. If $z\not\in U'$, then the restriction of $f_j'$ for some $j>1$ to $z$ is equal to $1$. So we can assume that $z'\in U'$. Consider several cases:

\begin{enumerate}
\item Let $z'=(x_0,t_0)$ lie on the indeterminacy locus of $f_1$. 
\begin{description}
\item[Case $i=1$] 
\begin{enumerate}
    \item $D'=\ol{\{t=g_k\}}$ for some $k$. We can choose $x$ as a coordinate on $D$. The restriction of $f_2'$ to $D$ is equal to $g_k$ and the restriction of $f_j',j\geq 3$ to $D$ is equal to $f_j$. So the statement follows from the fact that the cycle $[g_k,f_3,\dots, f_{n+1}]_X$ is admissible.
    \item $D'$ is an irreducible component of $(g_k)$ for some $k$. Choose $t$ as coordinate on $D$. The restrictions of $f_j', j\geq 3$ to $D$ are non-zero constants. So for any $j\geq 3$ we have $f_j'(z)\in\k^\t, j\geq 3$.
\end{enumerate}
    \item[Case $i=2$] We have $t_0\in \{0,\infty\}$. As $z'$ lies on the indeterminacy locus of $f_1$, there is $k$,  such that $g_k(x_0)\in\{0,\infty\}$. As $z'\in U'$ and the cycles $W_i$ are admissible this implies that for any $j\geq 3$ we have $f_j(x_0)\in \k^\t$. So for $j\geq 3$ we have $f_j'(z)\in\k^\t.$
    \item[Case $i\geq 3$] We can assume that $i=3$. We know that $x_0$ lies on the divisor of $f_3$. As the cycles $W_i$ are admissible, one of the following cases occurs:

    \begin{enumerate}
        \item For any $j$ we have $g_j(x_0)=1$. As $f_1$ is undefined in $z'$, we have $t_0=1$. So $z'\not\in U'$.
\item For any $j$ we have $g_j(x_0)\in\k^\t$ and for any $l\geq 4$ we have $f_l(x_0)\in\k^\t$. As $z'$ lies on the indeterminacy locus of $f_1$ we have $t_0\in\k^\t$. So the restrictions of the functions $f_2', f_j', j\geq 4$ to $D$ are non-zero constants.       
    \end{enumerate}

\end{description}

    \item Suppose that $z'$ does not lie on the indeterminacy locus of $f_1$. If $z'\in T$ then $f_j(z')=1$ for some $j$ and so $f_j'(z)=1$. So we can assume that $z\in U$. It is enough to check that $z'$ does not lie on divisors $(f_{i_1}), (f_{i_2}), (f_{i,3})$ for some $i_1<i_2<i_3$. The case $i_2\geq 3$ is obvious, because, in this case, the divisors $(f_{i_1}), (f_{i_2})$ are disjoint on $U$. So we can assume that $i_1=1, i_2=2, i_3=3$. Assume that $z'$ lies on the divisors of $f_2$ and $f_3$. As $x_0$ lies on the divisor of $f_3$, we have $g_j(x_0)\in \k^\t$. As $t_0$ lies on the divisor of $f_2$, we have $t_0\in\{0,\infty\}$. This implies that $z'$ does not lie on the divisor of $f_1$.
\end{enumerate}

\end{proof}

\begin{proof}[The proof of Proposition \ref{prop:multiplicative_relations_Chow}]
We can assume that $m \geq 2$.  Consider the cycle
 $$Z=\left[\dfrac{(t-g_3)(t-1)}{(t-g_1)(t-g_2)},t, f_3,\dots, f_{m+2}\right]_{X\t\P^1}.$$
By the previous lemma $Z$ is admissible. Let us compute its differential. Let $\varphi\colon \wt S\to S$ be a resolution of indeterminacy locus of $f_1$. Set $f_i'=f_i\circ\varphi$. Let $D'$ be some irreducible component of the intersection of $Z$ with one of the faces. Let $D$ be the proper transform of $D'$ under $\varphi$. $D$ is a divisor on $\wt S$.

Assume first that $D$ is contracted under $\ph$. As $D$ is contracted under $f_i',i\geq 2$ the restriction of $f_i'$ to $D$ are constants. This implies that the corresponding term in the differential lies in $\mc M(\k,m)$.

So we can assume that $D$ is not contracted under $\varphi$. If $t=0,\infty$ we get zero. If $x$ lies on the divisor of $f_i, i\geq 3$ then $f_j(x), j\ne i, j\geq 3$ are constants and the corresponding term in the differential lies in $\M(\k,m)$. If $t=1$, we get $0$. If $t=g_k$ we get $[g_k, f_3,\dots, \dots, f_{m+2}]_{\P^1}$. If $x$ lies on the divisor of $g_k$ for some $k$, then all $f_j, j\geq 3$ are constants and the corresponding term lies in $\M(\k,m)$.
\end{proof}

\subsection{Basic moving lemma}
\label{sec:moving_lemma}

\begin{definition}
    Let $X$ be a smooth variety. We say that a divisor $D=\sum\limits_{i=1}^n n_i D_i$ is general with respect to a family $\mc C=\{C_1\dots, C_n\}$ of smooth closed subsets, if the following conditions hold:
    \begin{enumerate}
        \item $D$ is supported on a snc divisor.
        \item For any $j_1<\dots< j_k$ and any $i$, the smooth variety $D_{j_1}\cap\dots\cap D_{j_k}$ intersects each $C_i$ transversally.
    \end{enumerate}
\end{definition}

We have the following classical statement \cite[Theorem 8.18]{hartshorne2013algebraic}.

\begin{theorem}[Bertini Theorem]
    Let $X$ be a smooth projective variety and $D$ be a very ample divisor. Denote by $|D|$ the set of effective divisors which are equivalent to $D$. There is an open subset $U\subset |D|$ such that for any $H\in U$, the divisor $H$ is smooth. Moreover, if $\dim X>1$, then $H$ is also irreducible.
\end{theorem}

\begin{lemma}
\label{lemma:moving_lemma_Bertini}
    Let $X$ be a smooth projective variety. Let $D$ be a divisor and $\mc C$ be any family of smooth closed subsets. There is a divisor $D'$ rationally equivalent to $D$ such that $D'$ is general with respect to $\mc C$.
\end{lemma}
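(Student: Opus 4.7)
The plan is a routine application of Bertini's theorem in characteristic zero, preceded by a reduction to divisors whose components come from very ample linear systems.

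\textbf{Reduction.} Write $D = \sum_i n_i D_i$ with distinct prime components $D_i$. Fix a very ample divisor $H$ on $X$. For each $i$ choose $N_i \gg 0$ so that both $D_i + N_i H$ and $N_i H$ are very ample; then $D_i \sim (D_i + N_i H) - N_i H$. If we choose general members $A_i \in |D_i + N_i H|$ and $B_i \in |N_i H|$, distinct from each other and from any prime-divisor member of $\mc C$, the divisor $D' := \sum_i n_i (A_i - B_i)$ is linearly (hence rationally) equivalent to $D$, and its prime components are precisely the chosen $A_i$ and $B_i$.

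\textbf{Iterated Bertini.} The characteristic-zero statement we need is: for a very ample $|L|$ on a smooth projective $X$ and any smooth locally closed $Z \subset X$, the members $M \in |L|$ that are smooth, meet $Z$ transversally, and yield smooth $M \cap Z$ of the expected codimension form a Zariski-dense open subset of $|L|$. Order the linear systems $|D_i + N_i H|$ and $|N_i H|$ as $L_1, \dots, L_r$, and choose members $M_1, \dots, M_r$ one at a time. Having chosen $M_1, \dots, M_{k-1}$ so that all partial intersections of their components are smooth and meet each $C_j$ transversally, form the finite collection $\mc S_k$ of smooth locally closed subvarieties of $X$ consisting of all such partial intersections and their further intersections with each $C_j$. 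Bertini applied to each $Z \in \mc S_k$ shows that transversality of $M_k$ with $Z$ is a dense open condition on $L_k$; a finite intersection of dense open subsets of the irreducible projective space $L_k$ remains dense, so we pick $M_k$ satisfying all of them simultaneously. Condition (1) of the definition is vacuous since each $C_j$ is assumed smooth, while conditions (2) and (3) hold by construction.

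\textbf{Main obstacle.} The whole argument is standard once one has Bertini--Sard in the required form. The only step that uses the characteristic-zero hypothesis of the paper is this Bertini transversality statement: in positive characteristic one would instead invoke Kleiman's generic transversality via a group action, which is not directly available here. Apart from this, the proof is pure bookkeeping: at each stage only finitely many open dense conditions are imposed on an irreducible linear system, so all finitely many transversality requirements can be satisfied simultaneously.
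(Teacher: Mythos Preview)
Your proof is correct and follows essentially the same approach as the paper: write $D$ as a $\mathbb Z$-linear combination of very ample divisors, then iteratively apply Bertini, enlarging the family at each step by the previously chosen members and their intersections with elements of $\mc C$. The paper's version is slightly leaner---it writes $D = D_1 - D_2$ with $D_1, D_2$ very ample in one step rather than decomposing each prime component separately---but the substance is identical.
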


 \begin{proof} We will prove only the case $\dim X>1$. The case $\dim X=1$ is similar.
 
 Let us first assume that $D$ is very ample. In this case, the statement follows from Bertini theorem simultaneously applied to $X$ and each $C_i$. Moreover, we can assume that $D'$ is irreducible.
 
As $X$ is smooth projective, any $D$ can be represented as $D_1-D_2$ with $D_1$ and $D_2$ be very ample. As $D_1$ is very ample we can apply the statement of this lemma to $D_1$. Let $D_1'$ be such that $D_1\sim D_1'$ and $D_1'$ is general with respect to $\mc C$. Denote by $\wt{\mc C}$ the union of $\mc C\cup \{D_1'\}\cup \{X\cap D_1'|X\in \mc C\}$. As $D_2$ is very ample we can apply the statement of this lemma to $D_2$  and $\wt{\mc C}$. Let $D_2'$ be a divisor such that 1)$D_2\sim D_2'$ and 2)$D_2'$ is general with respect to $\wt{\mc C}$. We argue that the divisor $D'=D_1'-D_2'$ satisfies the condition of the lemma.
 \end{proof}

The goal of this subsection is to prove the following lemma.

\begin{lemma}[Moving lemma]
\label{lemma:moving_lemma_final}
    Let $X$ be a smooth projective variety. Let $D$ be a smooth prime divisor and $\mc C$ be a family of smooth closed subsets. Assume that $D$ is general with respect to $\mc C$. Then there is a rational function $f$ such that:
    \begin{enumerate}
    \item $\ord_D(f)=1$;
        \item $(f)$ is general with respect to $\mc C$.
    \end{enumerate}
\end{lemma}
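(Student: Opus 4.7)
The plan is to produce $f$ as a ratio $s_1/s_1'$ of two sections of a sufficiently ample line bundle, where $s_1$ vanishes to order exactly $1$ along $D$ while $s_1'$ is chosen generically so that it does not vanish on $D$ at all. The generality of $(f)$ with respect to $\mc C$ is then enforced by two successive applications of Lemma \ref{lemma:moving_lemma_Bertini}.

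Concretely, I would fix a very ample line bundle $H$ on $X$ and choose $n\gg 0$ so that $L:=\mc O(D+nH)$ is very ample. Let $s_0\in H^0(\mc O(D))$ be the canonical section with $(s_0)=D$. In the first step I apply Lemma \ref{lemma:moving_lemma_Bertini} to the divisor class of $nH$ and to the auxiliary family $\mc C\cup\{D\}\cup\{D\cap C\mid C\in\mc C\}$ (every member of which is smooth, using that $D$ is already general with respect to $\mc C$) to obtain a section $t\in H^0(\mc O(nH))$ whose divisor $E:=(t)$ is a smooth irreducible divisor not containing $D$ and such that $D+E$ is supported on a snc divisor that is general with respect to $\mc C$. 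Set $D_1:=D+E$, represented by the section $s_1:=s_0\otimes t$ of $L$.

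In the second step I apply Lemma \ref{lemma:moving_lemma_Bertini} again, this time to $D_1$ and to the further enlarged family
$$\mc C':=\mc C\cup\{D,E,D\cap E\}\cup\{D\cap C,\,E\cap C,\,D\cap E\cap C\mid C\in\mc C\},$$
every member of which is smooth by Step 1. This produces $D_1'\sim D_1$ general with respect to $\mc C'$. In particular $D$ does not appear as a component of $D_1'$ (as $D\in\mc C'$), and $D_1'$ meets $D$, $E$, each $C\in\mc C$, and all their pairwise and triple intersections transversally. Let $s_1'\in H^0(L)$ be a section with $(s_1')=D_1'$ and set $f:=s_1/s_1'$. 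Then $(f)=D_1-D_1'=D+E-D_1'$, so $\ord_D(f)=1$, and $\supp((f))=D\cup\supp E\cup\supp D_1'$ is snc and transverse to every $C\in\mc C$ in the sense of the definition preceding Lemma \ref{lemma:moving_lemma_Bertini}.

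The main obstacle is purely organizational: at each Bertini step the auxiliary family must be enlarged to include every new intersection that the definition of ``general'' will later require to be transverse, and one must verify that these auxiliary subsets are smooth. This closure under intersections is finite at each stage, so the recipe terminates. Projectivity of $X$ enters essentially in ensuring that $L$ can be made very ample, which is precisely what makes Lemma \ref{lemma:moving_lemma_Bertini} applicable in the second step.
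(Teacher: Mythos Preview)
Your proposal is correct and follows the same underlying idea as the paper: enlarge the family $\mc C$ to include $D$ and the intersections $D\cap C$, then use Lemma~\ref{lemma:moving_lemma_Bertini} to move within the linear equivalence class. The paper, however, does this in a single application: it applies Lemma~\ref{lemma:moving_lemma_Bertini} directly to $D$ with respect to $\wt{\mc C}=\mc C\cup\{D\}\cup\{C\cap D\mid C\in\mc C\}$, obtains $D'\sim D$ general with respect to $\wt{\mc C}$, and sets $(f)=D-D'$. Your preliminary step of adjoining a very ample $E$ to make $D+E$ lie in a very ample class is unnecessary, since Lemma~\ref{lemma:moving_lemma_Bertini} already handles arbitrary divisor classes by writing them as a difference of very ample ones; you are in effect re-deriving part of that lemma's proof inline.
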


\begin{proof}
    Apply the previous lemma to $D$ and $\wt{\mc C}=\mc C\cup \{D\}\cup \{X\cap D|X\in \mc C\}$. Let $D=D'+(f)$. As $D'$ is general with respect to $\wt C$ we have $\ord_D(f)=1$ and the divisor $(f)$ is general with respect to $\mc C$.
\end{proof}

\begin{lemma}[Moving lemma for curves, strong form]
\label{lemma:moving_lemma_strong}
    Let $X$ be a smooth projective curve. Let $x\in X$ and $D$ be a finite subset of $X\bs\{x\}$. Then there is a rational function $f\in\k(X)$ such that 1) $\ord_x(f)=1$ and 2) for any $y\in D$ we have $f(y)=1$.
\end{lemma}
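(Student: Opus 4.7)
The plan is to invoke weak approximation for the function field $\k(X)$. Since $X$ is a smooth projective curve over $\k$, every closed point $p \in X$ determines a non-trivial discrete valuation $\nu_p$ on $\k(X)$, and valuations corresponding to distinct points are pairwise inequivalent. Weak approximation therefore applies to the finite collection $\{\nu_x\} \cup \{\nu_y : y \in D\}$: the diagonal embedding of $\k(X)$ into the product of the associated completions has dense image.

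Concretely, fix a local uniformizer $t \in \k(X)$ at $x$, so that $\nu_x(t) = 1$. Applying weak approximation, choose $f \in \k(X)$ satisfying
\[
\nu_x(f - t) \geq 2 \quad \text{and} \quad \nu_y(f - 1) \geq 1 \text{ for every } y \in D.
\]
The first condition implies $\ord_x(f) = 1$, because $f$ and $t$ agree to second order at $x$. The second gives $f(y) = 1$ for every $y \in D$, proving the lemma.

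If one prefers a more geometric argument that builds on Lemma \ref{lemma:moving_lemma_final}, proceed in two steps. First, apply that lemma with divisor $\{x\}$ and family $\mathcal{C} = D$ (viewing each $y \in D$ as a smooth closed subset) to obtain $f_0$ with $\ord_x(f_0) = 1$ and $(f_0)$ disjoint from $D$; thus $c_y := f_0(y) \in \k^\times$ for each $y$. Second, produce $g \in \k(X)$ with $g(x) = 1$ and $g(y) = c_y$ for every $y \in D$, and set $f := f_0/g$. The function $g$ is obtained by choosing a non-constant morphism $\pi\colon X \to \P^1$ whose composition separates the points of $\{x\} \cup D$ (and we may perturb $\pi$ to ensure this by composing with a further map $\P^1 \to \P^1$ if necessary), and pulling back a Lagrange interpolant on $\P^1$.

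The genuine content in either approach is the interpolation step, which is where one uses some input beyond formal manipulation of divisors; both routes reduce to a classical statement (weak approximation, respectively interpolation on $\P^1$) whose verification is standard once the set-up above is in place.
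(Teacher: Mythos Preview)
Your weak-approximation argument is correct and complete; it is a genuinely different route from the paper's. The paper stays entirely within the framework of the preceding Bertini-type moving lemma: it first produces $f_0$ with $\ord_x(f_0)=1$ and $f_0$ nonzero at each point of $D$ (Lemma~\ref{lemma:moving_lemma_final}), and then constructs the correcting function $g$ by an explicit Lagrange-style interpolation on $X$ itself---for each point $x_i$ it uses the moving lemma again to find a function vanishing at $x_i$ and nonzero at the others, multiplies these to obtain ``bump'' functions $g_i$ with $g_i(x_j)=\delta_{ij}$, and takes $g=1+\sum_i(a_i-1)g_i$. Your approach is shorter and invokes a single classical input; the paper's is more self-contained, depending only on what has already been established in the section.

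Your second sketch is close in spirit to the paper's argument, but the interpolation step as written is slightly off: composing a given $\pi\colon X\to\P^1$ with a further map $\P^1\to\P^1$ cannot separate points that $\pi$ already identifies, so ``perturbing by composition'' does not do what you want. One needs instead to vary $\pi$ itself (e.g.\ by adding another rational function), or---as the paper does---to perform the interpolation directly on $X$ without passing through $\P^1$. This is a minor issue, since your primary argument via weak approximation already settles the lemma.
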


\begin{proof}
Let $x_1=x$ and $D=\{x_2,\dots, x_n\}$.
By previous lemma there is a function $f_0$ such that $\ord_{x_1}(f_0)=1$ and $f_0(x_i)\in\k^\t$ for $i>1$. Let $a_i=f_0(x_i)^{-1}$. It remains to show that there is a function $g$ such that $g(x_1)=1, g(x_i)=a_i, i>1$, because in this case we can set $f=f_0g$.

Let us show that for any $i$ there is a function $g_i$ such that $g_i(x_i)=1$ and $g_i(x_j)=0$ for any $j\ne i$. 

Indeed, by previous lemma there is a function $r_{i}$ such that $r_{i}(x_i)=0$ and $r_i(x_j)\in \k^\t$ for any $j\ne i$. Let $\wt g_i=r_1\cdot\dots r_{i-1}\cdot r_{i+1}\cdot\dots r_n$. We have $\wt g_i(x_i)\in \k^\t$ and $\wt g_i(x_j)=0$ for any $j\ne i$. Finally, set $g_i=\wt g_i/\wt g_i(x_i)$.

Now we can set $g=1+(a_2-1)g_2+\dots (a_k-1)g_k$. 
\end{proof}

\subsection{The map $\mc W$ is surjective}
\label{sec:W_surjective}
The goal of this subsection is to prove the following theorem:

\begin{theorem}
\label{th:W_is_surjective}
    Let $\k$ be algebraically closed and $j\in\{m-2,m-1,m\}$. The map $\mc W\cl\N(\k,m)_j\to\L(\k,m)_j$ is surjective.
\end{theorem}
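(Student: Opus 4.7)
My plan is to verify surjectivity generator-by-generator. Take $[Y,f_1\wdw f_n]\in \L(\k,m)_j$ with $Y$ complete of dimension $p=m-j$ and $n=2m-j$. By the alteration relation in $\L$, I may replace $Y$ by any smooth projective alteration, so by Hironaka's embedded resolution of singularities (available in characteristic zero) I may additionally assume $Y$ is smooth projective and $\supp((f_1),\dots,(f_n))$ is a simple normal crossing divisor on $Y$. Proposition \ref{pr:weakly_adm_is_str_adm_intro} then rewrites $f_1\wdw f_n$ as a $\Q$-linear combination of wedges $g_1\wdw g_n$ for which $\supp((g_1),\dots,(g_n))$ is still SNC and additionally the divisors $(g_1),\dots,(g_n)$ pairwise share no irreducible components.

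I next check that each resulting cycle $[g_1,\dots,g_n]_Y$ is admissible. Writing $(g_i)=\sum_\alpha n_{i,\alpha}D_\alpha$ with $\{D_\alpha\}$ the components of an SNC divisor, the ``no common components'' condition says the index sets $\{\alpha:n_{i,\alpha}\ne 0\}$ are pairwise disjoint across $i$; hence for any $i_1<\dots<i_k$ and signs $\epsilon_j\in\{0,\infty\}$, the set $\bigcap_j\{g_{i_j}=\epsilon_j\}$ is a union of intersections of $k$ pairwise distinct $D_\alpha$'s and therefore has codimension $\geq k$ in $Y$ by SNC, giving properness of intersection with every codimension $k$ face of $\square^n$. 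To compare $\mc W'$ of this cycle with the generator, let $\psi=(g_1,\dots,g_n)\cl Y\ard(\P^1)^n$ with image closure $Z$. If $\dim Z<p$, then Lemma \ref{lemma:about_degenerate_cycles} applied with $R=Z$, $a=g_1=\psi^*(\wt t_1)$ and $b=\wt t_2\wdw \wt t_n$ (using $1\leq p-\dim Z$) forces $[Y,g_1\wdw g_n]=0$, matching $\mc W'([g_1,\dots,g_n]_Y)=0$. Otherwise $\psi$ is generically finite of some degree $d$, so $[g_1,\dots,g_n]_Y=(1/d)[Z]$ by definition and hence $\mc W'([g_1,\dots,g_n]_Y)=(1/d)[Z,\wt t_1\wdw\wt t_n]$; resolving the indeterminacy of $\psi$ by a birational $\ph\cl Y'\to Y$ and applying the alteration relation yields $[Y,g_1\wdw g_n]=[Y',(\psi\circ\ph)^*(\wt t_1\wdw \wt t_n)]=d\,[Z,\wt t_1\wdw\wt t_n]$, so $[Y,g_1\wdw g_n]=d^2\,\mc W'([g_1,\dots,g_n]_Y)$ and the generator lies in the image of $\mc W'$. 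Finally, since $\mc W'$ intertwines the $G_n$-action on $\mc Z$ with the representation $\rho_n$ on $\L$ (permutations give sign changes of wedges; inversion $t_i\mapsto t_i^{-1}$ also flips sign in the multiplicative group written additively), applying $\mathrm{Alt}_n$ to the preimage cycle yields an element of $\N(\k,m)$ with the same image under $\mc W$.

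The main obstacle is already packaged in Proposition \ref{pr:weakly_adm_is_str_adm_intro}: passing from SNC input to SNC-with-no-common-components output is the delicate step (the cross-ratio identity in Wofsey's example being its simplest instance). Once that decomposition is in hand, surjectivity reduces to tracking alteration degrees in $\L$ and the routine codimension bookkeeping described above.
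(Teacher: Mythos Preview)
Your proof is correct and follows the same route as the paper's: Hironaka to reduce to smooth projective with SNC divisor support, Proposition \ref{pr:weakly_adm_is_str_adm_intro} (which is exactly the paper's Lemma \ref{lemma:globally_strictly_regular}) to separate the divisors, then map to $(\P^1)^n$ and split into the degenerate and generically finite cases. Two remarks: in the degenerate case you could invoke Lemma \ref{lemma:degenerate_cycles} directly (pull back $\wt t_1\wdw\wt t_n$ from $Z$) rather than the more elaborate Lemma \ref{lemma:about_degenerate_cycles}; and your explicit tracking of the factor $d^2$ and of the passage from $\mc Z$ to $\N$ via $Alt_n$ are points the paper leaves implicit, so your write-up is if anything a bit more careful there.
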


We recall that the notion of an admissible cycle was introduced in Subsection \ref{subsec:higher_chow_groups}.

\begin{definition}
    We will say that a sequence of functions $(f_1,\dots, f_n)$ is weakly admissible if $\supp((f_1),\dots, (f_n))$ is a snc divisor.

    We will say that $(f_1,\dots, f_n)$ is strictly admissible if it is weakly admissible and $(f_i)$ do not have common components.

    We will say that a sequence of functions $f_1,\dots, f_n$ is admissible if the cycle $[f_1,\dots, f_n]_X$ is admissible. 
\end{definition}

\begin{remark}
    A sequence which is not weakly admissible can be admissible. However, a  strictly admissible sequence is admissible and weakly admissible.
\end{remark}

We need the following lemma whose proof is deferred:

\begin{lemma}
\label{lemma:globally_strictly_regular}
    Let $X$ be a smooth projective variety. Assume that the sequence $(f_1,\dots, f_n)$ is weakly admissible. Then $f_1\wdw f_n$ is a linear combination of the elements of the form $g_1\wdw g_n$ such that the sequence $(g_1,\dots, g_n)$ is strictly admissible.
\end{lemma}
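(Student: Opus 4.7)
The plan is to induct on the invariant
\[
\mu(f_1,\dots,f_n) \;=\; \sum_{D} \binom{|I_D|}{2}, \qquad I_D \;=\; \{\,l : D \in \supp((f_l))\,\},
\]
where $D$ ranges over irreducible divisors on $X$. This is well-defined by weak admissibility, and $\mu=0$ is precisely strict admissibility, so the base of the induction is trivial.

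For the inductive step, assume $\mu > 0$ and pick a divisor $D$ with $|I_D|\geq 2$; after reordering, assume $1,2\in I_D$. Writing $a_i=\ord_D(f_i)$, first replace $f_1\wedge f_2$ by $\tfrac{1}{a_1 a_2}(f_1^{a_2}\wedge f_2^{a_1})$ inside $f_1\wdw f_n$; since we work in $\L^n(\k(X)^\times\otimes\Q)$ this is legal, the supports of $f_1,f_2$ are unchanged, and $\mu$ is unchanged. Now $\ord_D(f_1^{a_2})=\ord_D(f_2^{a_1})=a_1 a_2$. Because $\supp((f_1),\dots,(f_n))$ is snc, the divisor $D$ is smooth and is general with respect to the family $\mc C$ of all components of all $(f_l)$ together with their mutual intersections. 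Apply Lemma~\ref{lemma:moving_lemma_final} to $D$ and $\mc C$: we obtain $h\in\k(X)^\times$ with $\ord_D(h)=1$ and $(h)$ general with respect to $\mc C$. In particular, every irreducible component of $(h)$ other than $D$ is distinct from every component of every $(f_l)$, and $\supp((f_1),\dots,(f_n),(h))$ is still snc. Setting $k:=h^{a_1 a_2}$, we have $\ord_D(k)=\ord_D(f_1^{a_2})=\ord_D(f_2^{a_1})$.

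Now apply the elementary identity (which is immediate from expansion)
\[
f\wedge g \;=\; \tfrac{1}{2}\bigl[(f/k)\wedge g \;+\; f\wedge(g/k) \;+\; (f/g)\wedge k\bigr]
\]
with $f=f_1^{a_2}$, $g=f_2^{a_1}$, and wedge with $f_3\wdw f_n$. This expresses $f_1\wdw f_n$ as a $\Q$-linear combination of three simple wedges $g_1\wdw g_n$ whose combined supports are contained in $\supp((f_1),\dots,(f_n),(h))$, and are therefore weakly admissible.

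The remaining task is to verify that $\mu$ strictly decreases in each of the three new sequences, so that the induction closes. Two observations make this straightforward: (i) by the genericity of $h$, every component $E\neq D$ of $(h)$ satisfies $E\notin\supp((f_l))$ for all $l$, so such $E$ contribute $\binom{1}{2}=0$ to $\mu$ of the new sequence; (ii) by the rescaling, $D$ is cancelled from whichever of $f/k,\ g/k,\ f/g$ appears in each term, giving $|I_D^{\mathrm{new}}|=|I_D|-1$ in each of the three terms and thus a strict decrease at $D$ of $|I_D|-1\geq 1$. For any other component $E$, direct checking shows $|I_E^{\mathrm{new}}|\leq |I_E|$ in each term: in the first two terms only the single index $1$ or $2$ is replaced by a new index (same size), while in the third term indices $1,2$ are replaced by at most one new index (namely that of $f_1^{a_2}/f_2^{a_1}$, and only when $\ord_E(f_1^{a_2})\neq\ord_E(f_2^{a_1})$). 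Thus $\mu^{\mathrm{new}}<\mu$ in each term, and the inductive hypothesis concludes the proof. The main subtlety is the bookkeeping in the third term, where $\supp(f_1^{a_2}/f_2^{a_1})$ can absorb components of both $f_1$ and $f_2$; but precisely because both indices $1$ and $2$ are replaced by a single new index there, no pair count at any $E$ can grow.
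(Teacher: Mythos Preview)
Your argument is correct and is a genuine streamlining of the paper's proof. The paper breaks the statement into two inductions (Lemmas~\ref{lemma:globally_regular_first} and~\ref{lemma:globally_regular_second}): first reduce to sequences with pairwise disjoint $J_D$'s via the simple rewrite $f_1\wedge\cdots=(f_1h^{-\ord_{D}f_1})\wedge\cdots+\ord_D(f_1)\,(h\wedge\cdots)$, and only then apply the three-term identity to reach strict admissibility. You collapse both into a single induction on $\mu=\sum_D\binom{|I_D|}{2}$ by rescaling $(f_1,f_2)\mapsto(f_1^{a_2},f_2^{a_1})$ so that the orders at $D$ match before invoking the same three-term identity. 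This rescaling is exactly what lets the identity kill $D$ from all three summands at once, bypassing the paper's first lemma entirely; your bookkeeping at the other components is accurate.

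One small technical correction: your family $\mc C$ must not contain $D$ itself. As you wrote it, the assertion ``$D$ is general with respect to $\mc C$'' is false, since Definition~4.6(3) would force $D$ to meet $D$ transversally, and then Lemma~\ref{lemma:moving_lemma_final} cannot be invoked. Take instead $\mc C=s\bigl(\supp((f_1),\dots,(f_n))-[D]\bigr)$, exactly as the paper does; then $D$ is general with respect to $\mc C$ because the original support is snc, and the conclusion that $(h)$ is general with respect to this $\mc C$ is precisely what makes $\supp((f_1),\dots,(f_n),(h))$ an snc divisor and keeps the new components of $(h)$ away from the old ones. With this fix your proof goes through as written.
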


\begin{proof}[The proof of Theorem \ref{th:W_is_surjective}] Let $a=[X,f_1\wdw f_n]\in\L(\k,m)_j$. We need to show that $a$ lies in the image of $\mc W$. By Hironaka resolution of singularities, we can assume that $X$ is smooth projective and the sequence $f_1,\dots, f_n$ is weakly admissible. Using the previous lemma we can assume that the sequence $(f_1,\dots, f_n)$ is strictly admissible. Consider the map $\psi\colon X\to (\mb P^1)^n$ given by $z\mapsto (f_1(z),\dots, f_n(z))$. This map is undefined at points where the divisor of zeros of some $f_i$ intersects the divisor of poles of $f_i$. Blowing up $X$ in these codimension $2$ subsets, we can assume that $\psi$ is regular.  Denote by $Z$ its image. 

Assume that $\dim Z<\dim X$. In this case it follows from Lemma \ref{lemma:about_degenerate_cycles}, that $[X,f_1\wdw f_n]=0$. In particular, $[X,f_1\wdw f_n]$ lies in the image of $\mc W$.

So we can assume that $\dim Z=\dim X$. It follows that the natural map $X\to Z$ is an alteration. As the sequence $f_1,\dots, f_n$ is strictly admissible, the cycle $[f_1,\dots, f_n]_X$ is admissible. Now, it is easy to see that $$[X,f_1\wdw f_n]=\mc W([f_1,\dots, f_n]_X).$$
\end{proof}

We proceed to the proof of Lemma \ref{lemma:globally_strictly_regular}. We need the following definition.
\begin{definition}
\label{def:J}
    Let $X$ be a smooth variety and $f_1,\dots, f_n\in\k(X)$. Let $D$ be a prime divisor. Define the set $J_D=J_D(f_1,\dots, f_n)$ as follows. Let $J_D'$ be $\{i|\ord_D(f_i)\ne 0\}$. If $|J_D'|\geq 2$ we set $J_D=J_{D}'$. Otherwise, we set $J_D=\varnothing$.
\end{definition}

Let us explain the idea of the proof of Lemma \ref{lemma:globally_strictly_regular}. Let $(f_1,\dots,f_n)$ be a weakly admissible sequence. Repeatedly applying identities of the form 
$$f_1\wedge\dots\wedge f_i\wdw f_n=f_1\wdw (f_i/h)\wdw f_n+f_1\wdw h\wdw f_n,$$
we will show the element $f_1\wdw f_n$ can be represented as a linear combination of elements of the form $g_1\wdw g_n$ such that the sequence $(g_1,\dots, g_n)$ is weakly admissible and (for fixed $(g_1,\dots, g_n)$) the sets $J_D(g_1,\dots, g_n)$ are disjoint, where $D$ varies among all prime divisors on $X$. So we can assume that $(f_1,\dots, f_n)$ is a weakly admissible sequence such that the sets $J_D(f_1,\dots, f_n)$ are disjoint. We have the following lemma.
\begin{lemma}
\label{lemma:formula}
Let $X$ be a variety  and $h, f_1,\dots, f_n\in \k(X)^\t$. Then:
\begin{align*}
    &f_1\wedge f_2\wdw f_n=\\&1/2(f_1/h\wedge f_2\wdw f_n+f_1\wedge f_2/h\wdw f_n+f_1/f_2\wedge h\wdw f_n).
\end{align*}
\end{lemma}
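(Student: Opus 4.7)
The plan is to verify this identity by a direct expansion in $\Lambda^n(\k(X)^\t\otimes_{\mb Z}\mb Q)$. Recall that the group operation on $\k(X)^\t$ is written additively inside the exterior algebra, so $f_1/h$ corresponds to $[f_1]-[h]$, and in particular $(f_1/h)\wedge a = f_1\wedge a - h\wedge a$ for any $a\in\Lambda^{n-1}(\k(X)^\t\otimes\mb Q)$.

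Setting $\omega = f_3\wdw f_n$, I would expand each of the three terms on the right-hand side by bilinearity:
\begin{align*}
(f_1/h)\wedge f_2\wedge\omega &= f_1\wedge f_2\wedge\omega - h\wedge f_2\wedge\omega,\\
f_1\wedge (f_2/h)\wedge\omega &= f_1\wedge f_2\wedge\omega - f_1\wedge h\wedge\omega,\\
(f_1/f_2)\wedge h\wedge\omega &= f_1\wedge h\wedge\omega - f_2\wedge h\wedge\omega.
\end{align*}
Summing, the two occurrences of $f_1\wedge h\wedge\omega$ cancel, and by antisymmetry of the wedge product $f_2\wedge h\wedge\omega = -h\wedge f_2\wedge\omega$, so the remaining two terms $-h\wedge f_2\wedge\omega - f_2\wedge h\wedge\omega$ add to zero. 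What is left is $2 f_1\wedge f_2\wedge\omega$, and dividing by $2$ gives the claim.

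There is no real obstacle: everything reduces to multilinearity and antisymmetry, both of which hold in the $\mb Q$-vector space $\Lambda^n(\k(X)^\t\otimes\mb Q)$. The only mildly interesting observation is that the three replacements $(f_1/h, f_2)$, $(f_1, f_2/h)$, $(f_1/f_2, h)$ form a symmetric family whose ``error terms'' pair up to cancel by antisymmetry, which is why the factor $1/2$ and exactly three summands appear. Given the formula, however, checking it is pure bookkeeping.
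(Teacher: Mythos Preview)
Your proof is correct and is precisely the direct computation the paper alludes to; the expansion by bilinearity and the cancellation via antisymmetry are exactly what is needed.
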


We will show that the repeated application of the above lemma allows to write element $f_1\wdw f_n$ as a linear combination of the elements of the form $g_1\wdw g_n$ such that the sequence $(g_1,\dots, g_n)$ is strictly admissible.

\begin{proof}[The proof of lemma \ref{lemma:formula}]
    Indeed, we have
    \begin{align*}
        &f_1/h\wedge f_2\wdw f_n+f_1\wedge f_2/h\wdw f_n+f_1/f_2\wedge h\wdw f_n=\\
        &f_1\wedge f_2\wdw f_n-h\wedge f_2\wdw f_n+f_1\wedge f_2\wdw f_n-\\
        &-f_1\wedge h\wedge f_3\wdw f_n+f_1\wedge h\wedge f_3\wdw f_n-f_2\wedge h\wedge f_3\wdw f_n=\\
        &2\left(f_1\wdw f_n\right)-h\wedge f_2\wdw f_n-f_2\wedge h\wdw f_n=\\
        &2\left(f_1\wdw f_n\right).
    \end{align*}
\end{proof}

\begin{proof}[Proof of Lemma \ref{lemma:globally_strictly_regular}.]
    Let $D$ be an effective divisor and $D=\sum\limits_{i=1}^nn_iD_i$. Denote by $s(D)$ the family of subsets $\{D_{i_1}\cap\dots\cap D_{i_k}| 1\leq i_1<\dots< i_k\leq n\}$.
    
    Let $(f_1,\dots,f_n)$ be a weakly admissible sequence. Let us first show that $f_1\wdw f_n$ can be written as a linear combination of elements of the form $g_1\wdw g_n$ such that 1)the sequence $(g_1,\dots, g_n)$ is weakly admissible and 2)the sets $J_D$ are disjoint, where $D$ varies among all prime divisors on $X$. Define the following invariant:    
$$a_1(f_1,\dots, f_n)=\sum\limits_{D_1\ne D_2}|J_{D_1}\cap J_{D_2}|.$$ 

The vanishing of $a_1$ is equivalent to the fact that the sets $J_D$ are disjoint. So, it is enough to prove that the element $f_1\wdw f_n$ can be represented as a linear combination of elements of the form $g_1\wdw g_n$ such that 1)the sequence $(g_1,\dots, g_n)$ is weakly admissible and 2) $a_1(g_1,\dots, g_n)<a_1(f_1, \dots, f_n)$.

Let us suppose that $J_{D_1}\cap J_{D_2}\ne\varnothing.$ Without loss of generality we can assume that $1\in J_{D_1}\cap J_{D_2}$. Let $D=\supp((f_1),\dots, (f_n))$ and $D'=D-[D_1]$. By Lemma \ref{lemma:moving_lemma_final} there is $h$ such that $\ord_{D_1}(h)=1$ and $(h)$ is general with respect to $s(D')$. We have
    $$f_1\wedge f_2\wdw f_n=(f_1h^{-\ord_{D_1}(f_1)})\wedge f_2\wdw f_n+(\ord_{D_1}(f_1))(h\wedge f_2\wdw f_n).$$
    We argue that $((f_1h^{-\ord_{D_1}(f_{1})}),f_2,\dots, f_n), (h,f_2,\dots, f_n)$ are weakly admissible and have strictly smaller value of $a_1$.

    So we can assume that the sets $J_D$ are disjoint. Define the following invariant:
    $$a_2(f_1,\dots, f_n)=\sum\limits_{D}\sum\limits_{i<j}|\ord_D(f_i)\ord_D(f_j)|.$$ 
    
    The vanishing of $a_2$ is equivalent to the fact that $(f_1,\dots, f_n)$ is strictly admissible. So, it is enough to show that $f_1\wdw f_n$ is a linear combination of elements of the form $g_1\wdw g_n$ such that 1)the sequence $(g_1,\dots, g_n)$ is weakly admissible, 2)the sets $J_D(g_1,\dots, g_n)$ are disjoints and 3)$a_2(g_1,\dots, g_n)<a_2(f_1,\dots, f_n)$. 
    
    Without loss of generality, we can assume that for some $D_1$ we have $\ord_{D_1}(f_1), \ord_{D_1}(f_2)>0$. Let $D=\supp((f_1),\dots, (f_n))$ and $D'=D-[D_1]$. Choose $h$ such that $\ord_{D_1}(h)=1$ and $h$ is general with respect to $s(D')$. Such $h$ exists by Lemma \ref{lemma:moving_lemma_final}.  By Lemma \ref{lemma:formula}, we get
    $$f_1\wedge f_2\wdw f_n=1/2(f_1/h\wedge f_2\wdw f_n+f_1\wedge f_2/h\wdw f_n+f_1/f_2\wedge h\wdw f_n).$$
    We argue that the sequences $(f_1/h,\dots, f_n), (f_1,f_2/h\dots, f_n), (f_1/f_2,h,\dots, f_n)$ satisfy the conditions stated above.
\end{proof}

\subsection{Higher Chow groups and strictly admissible sequences}

Denote by $\CH(\k,m)_{m-1}'$ a subgroup of $\CH(\k,m)_{m-1}$ generated by the elements of the form $[f_1,\dots, f_n]_X$, where $X$ is a smooth projective curve, and $(f_1,\dots, f_n)$ is strictly admissible sequence(= the divisors of $f_i$ are disjoint). Denote by $\pi$ the natural projection $\CH(\k,m)_{m-1}\to\wCH(\k,m)_{m-1}.$ We recall that $\mc W$ denotes the natural map $\sigma_{\geq m-2}\N(\k,m)\to \L(\k,m)$ constructed in Section \ref{subsec:lambda_higher_chow_groups}.

\begin{theorem}
\label{th:Chow_genereted_disjoint_divisors}
    The group $\CH(\k,m)_{m-1}$ is generated by the subgroups $\CH(\k,m)_{m-1}'$ and $\ker\pi\cap\ker\W$.
\end{theorem}

\begin{remark}
    It follows from the main result of this section that $\ker \W\subset \ker \pi$. However we don't know this fact yet.
\end{remark}

\begin{remark}
    It follows from this theorem that the group $\CH(\k,m)_{m-1}/\ker\pi\cap\ker\W$ is generated by the elements of the form $[X,f_1,\dots, f_n]$, where $X$ is a smooth projective curve and the divisors of $f_i$ are disjoint. However, this theorem does not say that any cycle in higher Chow groups is a linear combination of smooth curves; The cycle $[X,f_1,\dots, f_n]$ can be singular. The fact that any $1$-dimensional cycle on $\square^n$ has the form $[Y,g_1,\dots, g_n]$ with smooth projective curve $Y$ follows from the fact that any curve has a resolution of singularities.
\end{remark}

The proof of this theorem is similar to the proof of Lemma \ref{lemma:globally_strictly_regular}. From here to the end of this subsection we work modulo elements from $\ker\pi\cap\ker\W$.

\begin{lemma}
    The group $\CH(\k,m)_{m-1}$ is generated by $\ker\pi\cap\ker\W$ and by elements of the form $[f_1,\dots, f_n]_X$ such that the sets $J_x(f_1,\dots, f_n)$ are disjoint, as $x$ varies among the closed points of $X$.
\end{lemma}

\begin{proof}
As any curve has a resolution of singularity, any irreducible cycle in $\mc N(\k,m)_{m-1}$ has the form $[f_1,\dots, f_n]_X$ for some smooth projective curve $X$. By definition, the sequence $(f_1,\dots, f_n)$ is admissible. Let
    $$a_1(f_1,\dots, f_n)=\sum\limits_{x\ne y} |J_{x}\cap J_y|.$$ We will show that the element $[f_1,\dots, f_n]_X$ is a linear combination of elements of the form $[g_1,\dots, g_n]_X$ such that the sequence $(g_1,\dots, g_n)$ is admissible and $$a_1(g_1,\dots, g_n)<a_1(f_1,\dots, f_n).$$ Suppose that $J_x\cap J_y\ne\varnothing$. We can assume that $1\in J_x\cap J_y$.
    Let $D=\supp((f_1),\dots, (f_n))$ and $D'=D-[x]$. Choose $h$ such that $h(y)=1$ for any $y\in \supp D'$ and $\ord_x(h)=1$. Such function exists by Lemma \ref{lemma:moving_lemma_strong}. As before, it is easy to see that the sequences $(f_1h^{-\ord_x(f_1)}, f_2,\dots, f_n), (h^{\ord_x(f_1)},f_2,\dots, f_n)$ are admissible and their values of $a_1$ are strictly smaller. It remains to show that the element $$[f_1,\dots,f_n]-[f_1h^{-\ord_x(f_1)}, \dots,f_n]+[h^{\ord_x(f_1)},\dots,f_n]$$
    is contained in $\ker\pi\cap\ker\mc W$.
    Indeed, it is obvious that this element is contained in $\ker\mc W$. The fact that this element is contained in $\ker \pi$ follows from the multiplicative relations proven by Lemma \ref{prop:multiplicative_relations_Chow}. 
\end{proof}

\begin{proof}[The proof of Theorem \ref{th:Chow_genereted_disjoint_divisors}]
Let $f_1,\dots, f_n$ be an admissible sequence such that the sets $J_x(f_1,\dots, f_n)$ are disjoint. Define the following invariant:
$$a_2(f_1,\dots, f_n)=\sum\limits_x\sum\limits_{i<j} |\ord_x(f_i)||\ord_x(f_j)|.$$
In this formula $x$ varies among all the closed points of $X$.
Assume that the value of $a_2$ is positive. We will show that the element $[f_1,\dots, f_n]_X$ is a linear combination of elements of the form $[g_1,\dots, g_n]_X$ such that 1) the sequence $(g_1,\dots, g_n)$ is admissible 2)the sets $J_x(g_1,\dots, g_n)$ are disjoint and 3) $$a_2(g_1,\dots, g_n)<a_2(f_1,\dots, f_n).$$ Suppose that $|\ord_{x}(f_1)||\ord_{x}(f_2)|>0$. Let $D=\supp((f_1),\dots, (f_n))$ and $D'=D-[x]$. Choose $h$ such that $\ord_{x}=1$ and $h(y)=1$ for any $y\in \supp D'$. It is easy to see that the following sequences are admissible: 
\begin{align*}
    &(h,f_2,\dots, f_n), (f_1/h,f_2,\dots, f_n), (h,f_1,f_3,\dots, f_n), \\
    & (h/f_2,f_1,\dots, f_n), (h,f_2/f_1,\dots, f_n).
\end{align*} 
We get:
\begin{align*}
    &[f_1,f_2,\dots, f_n]_X=[h,f_2,\dots, f_n]_X+[f_1/h,f_2,\dots, f_n]_X=\\
    &[h,f_1,\dots, f_n]_X+[h,f_2/f_1,\dots, f_n]_X+[f_1/h,f_2,,\dots, f_n]_X=\\
    &[f_2,f_1,\dots, f_n]_X+[h/f_2,f_1,\dots, f_n]_X+[h,f_2/f_1,\dots, f_n]_X+[f_1/h,f_2,,\dots, f_n]_X.
\end{align*}
So
\begin{align*}
    &[f_1,f_2,\dots, f_n]_X=\\
    &1/2([h/f_2,f_1,\dots, f_n]_X+[h,f_2/f_1,\dots, f_n]_X+[f_1/h,f_2,,\dots, f_n]_X).
\end{align*}
We remark that this formula essentially follows from Lemma \ref{lemma:formula}. It remains to note, as before, that the numbers $$a_2(h/f_2,f_1,\dots, f_n), a_2(h,f_2/f_1,\dots, f_n), a_2(f_1/h,f_2,,\dots, f_n)$$ are strictly smaller than $a_2(f_1,\dots, f_n).$
\end{proof}

\subsection{Generators and relations for $\EP{n}{X}$}
Let $X$ be a smooth projective curve. Denote by $V_n(X)$ a vector space generated by elements of the form $\langle f_1,\dots,f_n\rangle$, where $f_i$ are rational functions on $X$ with pairwise disjoint divisors, modulo the following relations:
\begin{enumerate}
    \item $\langle f_{\sigma(1)},\dots, f_{\sigma(n)}\rangle=sgn(\sigma)\langle f_1,\dots,f_n\rangle.$
    \item Assume that $(f_i)$ is disjoint from $(g_j)$ for any $i,j$ and $(f_i)$ is disjoint from $(f_j)$ for any $i\ne j$. Then $$\langle g_1g_2,f_2,\dots, f_n\rangle=\langle g_1,f_2,\dots, f_n\rangle+\langle g_2,f_2,\dots, f_n\rangle.$$ 
\end{enumerate}

We have the natural map $\theta_{\Lambda,n}\colon V_n(X)\to \EP{n}{X}$, given by the formula $\theta(\langle f_1,\dots, f_n\rangle)=f_1\wdw f_n$. 

Here is the main result of this subsection:

\begin{theorem}
\label{th:theta_is_an_isomorphsim}
    Let $X$ be a smooth projective curve over $\k$. Then the map $\theta_{\Lambda,n}$ is an isomorphism.
\end{theorem}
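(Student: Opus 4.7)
The plan is to prove surjectivity and injectivity of $\theta$ separately.

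For surjectivity, the key observation is that on a smooth projective curve every effective divisor is automatically supported on a simple normal crossing divisor, since any finite reduced set of closed points is snc. Hence every sequence $(f_1,\ldots,f_n)$ of nonzero rational functions on $X$ is weakly admissible, and Lemma \ref{lemma:globally_strictly_regular} rewrites any $f_1\wdw f_n$ as a $\Q$-linear combination of wedges $g_1\wdw g_n$ with $(g_1,\ldots,g_n)$ strictly admissible. On a curve strict admissibility coincides with pairwise disjointness of the divisors $(g_i)$, so each such wedge is the image of a generator of $V(X)$ under $\theta$.

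For injectivity, the strategy is to construct a left inverse $\Psi\colon\EP{n}{X}\to V(X)$ satisfying $\Psi\circ\theta=\mathrm{id}_{V(X)}$. Identifying $\EP{n}{X}=\Lambda^n A$ with $A=\k(X)^\times\otimes\Q$, by the universal property of the exterior power this amounts to producing a $\Q$-multilinear alternating map $\widetilde\Psi\colon A^n\to V(X)$ that agrees with $\theta^{-1}$ on tuples whose divisors are pairwise disjoint. For a general tuple $(a_1,\ldots,a_n)$ with overlap complexity $N(a_1,\ldots,a_n)=\sum_{i<j}|\supp((a_i))\cap\supp((a_j))|$, the construction rewrites $a_1\wdw a_n$ by induction on $N$. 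The model is the identity highlighted in the introduction's example: when two factors share a common point $p$ with equal orders, the symmetric triangle identity
\[
a\wedge b=\tfrac{1}{2}\bigl(a/c\wedge b+a\wedge b/c+a/b\wedge c\bigr)
\]
with $c$ chosen via Lemma \ref{lemma:moving_lemma_strong} to have $\nu_p(c)=\nu_p(a)$ and value $1$ on $(\supp((a))\cup\supp((b)))\setminus\{p\}$ produces three wedges which are pairwise disjoint at $p$. For unequal orders at a common point, one first factors each $a_i$ multiplicatively to equalize the orders at the current overlap point (realizing the factors globally by further applications of Lemma \ref{lemma:moving_lemma_strong}, possibly enlarging the support set to host the auxiliary divisors). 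In analogy with the proof of Lemma \ref{lemma:globally_regular_second} this strictly decreases $N$, so the recursion terminates in disjoint-divisor wedges, which are the generators of $V(X)$.

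The main obstacle will be verifying that $\widetilde\Psi$ is well-defined: independent of the auxiliary functions chosen at each splitting step and of the order in which shared points are processed. Two admissible choices of the auxiliary function $c$ differ by a function that is a unit on the current support set, and the two induced decompositions differ by a sum of terms whose vanishing in $V(X)$ must be obtained from the one-slot multiplicativity of relation (2) in the definition of $V(X)$, combined with further applications of Lemma \ref{lemma:moving_lemma_strong} to absorb the remote divisors of the unit. Order-independence (confluence) will be handled by induction on the overlap complexity, comparing two processing orders through a common refinement whose intermediate steps are each justified by a multiplicativity relation in $V(X)$. Once well-definedness is established, $\Q$-multilinearity and alternating-ness of $\widetilde\Psi$ are immediate from the construction, and $\Psi\circ\theta=\mathrm{id}_{V(X)}$ follows because disjoint tuples require no rewriting, completing the proof of the theorem.
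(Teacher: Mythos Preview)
Your approach is essentially the same as the paper's: surjectivity via Lemma \ref{lemma:globally_strictly_regular} (which applies because on a curve every tuple is automatically weakly admissible), and injectivity by building an explicit inverse through the triangle identity and induction on overlap complexity. The paper organizes the inverse construction in two stages rather than one. It first proves a preparatory lemma (Lemma \ref{lemma:first}) that constructs and characterizes the inverse $\beta$ on the restricted class of tuples where the sets $J_x$ are pairwise disjoint and all relevant orders equal $1$; the uniqueness clause there absorbs the confluence verification you describe. It then handles arbitrary tuples by a ``table'' decomposition: each $f_i$ is written as a product $\prod_j h_{i,j}^{k_{i,j}}$ of functions with simple divisors chosen via the moving lemma, and $\alpha(f_1,\ldots,f_n)$ is defined as the corresponding sum of $\beta$-values. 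Independence of the table is checked by changing one entry at a time, and multiplicativity is verified directly on the table expansion.

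Your unified recursion and the paper's table construction amount to the same computation, but the paper's stratification makes the well-definedness checks linear rather than requiring a full confluence argument. In particular, your step ``for unequal orders, first factor each $a_i$ multiplicatively to equalize the orders'' is exactly what the table does, but proving that this factoring step is independent of choices \emph{before} you have established the restricted map $\beta$ is awkward, since the only multiplicativity relation available in $V(X)$ requires the new factor's divisor to be disjoint from \emph{all} the others. The paper sidesteps this by first nailing down $\beta$ (with its own internal induction on $|Y|$ and $|J_y|$), and only then expanding the general case against it. If you carry out your plan, you will likely find yourself reinventing this two-layer structure.
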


We know that this map is surjective by Lemma \ref{lemma:globally_strictly_regular}. So we need to check that $\theta_{\Lambda,n}$ is injective.

Let $Z$ be a finite subset of $X$. Denote by $T_{Z,n}(X)$ a vector space given by the following generators and relations. 

Generators: $\langle f_1,\dots,f_n\rangle_\otimes$, where $f_i$ are non-zero rational functions on $X$ with pairwise disjoint divisors such that for any $z\in Z$ we have $\ord_z(f_i)=0$ for any $i$. 

Relations: Let $k\in\{1,\dots, n\}$. Assume that $(f_i)$ is disjoint from $(g_j)$ for any $i,j$ and $(f_i)$ is disjoint from $(f_j)$ for any $i\ne j$. Then $$\langle f_1,\dots, f_k,g_1g_2,f_{k+2},\dots, f_n\rangle_\otimes=\langle f_1,\dots, f_k,g_1,f_{k+2},\dots, f_n\rangle_\otimes+\langle f_1,\dots, f_k,g_2,f_{k+2},\dots, f_n\rangle_\otimes.$$ 

In the case $Z=\varnothing$ we will write $T_{n}$ for $T_{Z,n}$.


Denote by $\theta_{\otimes,Z,n}$ the natural map $T_{Z,n}(X)\to \otimes^n\k(X)^\times$. When $Z=\varnothing$ we will write it as $\theta_{\otimes,n}$.

To prove Theorem \ref{th:theta_is_an_isomorphsim} we need the following lemma. 

\begin{lemma}
\label{prop:theta_tensor_is_injective}
    The map $\theta_{\otimes,Z,n}$ is injective.
\end{lemma}

\begin{proof}[The proof of Theorem \ref{th:theta_is_an_isomorphsim}]
    Assume that $\theta_{\Lambda,n}(\alpha)=0$. We have the natural maps $alt'\colon V_n(X)\to T_{n}(X)$ given by antisymmetrization:
    $$alt'(\langle f_1,\dots, f_n\rangle)=\dfrac 1{n!}\sum_{\sigma\in S_n}sgn(\sigma)\langle f_{\sigma(1)},\dots, f_{\sigma(n)} \rangle_\otimes.$$
    We can identify $\EP nX$ with the set of antisymmetric tensors in $\otimes^n\k(X)^\times$. Denote by  $$alt\colon \EP nX\to \otimes^n \k(X)^\times$$the corresponding map. We get $\theta_{\otimes, n}alt'(\alpha)=alt(\theta_{\Lambda,n}(\alpha))=0.$ So by the previous proposition we get $alt'(\alpha)=0$.

    Denote by $pr$ the natural map $T_{n}(X)\to V_n(X)$ given by the formula
    $$\langle f_1,\dots, f_n\rangle_\otimes\mapsto \langle f_1,\dots, f_n\rangle.$$
    We get $\alpha=pr(alt'(\alpha))=pr(0)=0$.
\end{proof}

To prove Lemma \ref{prop:theta_tensor_is_injective} we need the following two lemmas.

\begin{lemma}
\label{lemma:about_basis}
    Let $X$ be a smooth projective curve and $Z\subset X$ be a finite subset of $X$. There is a basis $h_\beta$ of the vector space $\k(X)^\times\otimes_\Z\Q$ with the following property.  Assume that for some $f\in \k(X)^\times$ we have the decomposition $$f=\prod_\beta  h_\beta^{n_\beta},\quad n_\beta\in \Q.$$
Assume also that for some $z\in Z$, we have $\ord_z(f)=0$. Then for any $\beta$ such that $n_\beta\ne 0$ we have $\ord_z(h_\beta)=0$.
\end{lemma}

\begin{proof}
    For any $z$ choose function $h_z$ such that $\ord_z(h_z)=1$ and $\ord_w(h_z)=0$ for any $w\in Z\bs\{z\}$. Such function exists by Lemma \ref{lemma:moving_lemma_strong}. Extend $h_z$ to a basis. Let $\{h_z,q_\beta\}$ be a basis that we get in this way. Set $$\wt q_\beta=q_\beta\prod_{z\in Z} h_z^{-\ord_z(q_\beta)}.$$
    For any $z\in Z$ we get $\ord_z(\wt q_\beta)=0$. 
It is easy to see that the basis $\{h_z,\wt q_\beta\}$ satisfies the statement of this lemma.
\end{proof}

The following lemma is well-known.

\begin{lemma}
\label{lemma:linear_independence}
    Let $V,W$ be vector spaces. Let $v_1,\dots, v_k\in V, w_1,\dots, w_k\in W$. Assume that $v_i$ are linearly independent. If
    $\sum v_k\otimes w_k=0$ then $w_k=0$.
\end{lemma}

\begin{proof}[The proof of proposition \ref{prop:theta_tensor_is_injective}]
    The proof is by induction on $n$. The case $n=1$ is obvious.

    Let $x\in T_{ Z,n}(X)$ such that $\theta_{\otimes,Z,n}(x)=0$. We need to check that $x=0$. Let
    $$x=\sum_{\alpha\in A} n_\alpha\langle f_1^\alpha\wedge\dots\wedge f_n^\alpha\rangle_\otimes.$$
    We can assume that $n_\alpha\ne 0$.

    Let $Q$ be the set $$\sup\left(\sum_{i,\alpha}\sup((f_i^\alpha))\right)$$ and let $\wt Z=Z\cup Q$. Let $h_\beta$ be a basis from Lemma \ref{lemma:about_basis} for $Z=\wt Z$.

    Replacing $x$ with $Nx, N\in\mb Z$ we can assume that coefficients of each $f_1^\alpha$ in this basis are integers. So for some $l_{\alpha,\beta}\in\Z$ we have
    $$f_1^\alpha=\prod {h_\beta}^{l_{\alpha,\beta}}.$$

    For fixed $\alpha$ denote by $B_\alpha$ the set of $\beta$ such that $l_{\alpha,\beta}\ne 0$. For fixed $\beta$ denote by $A_\beta$ the set of $\alpha$ such that $l_{\alpha,\beta}\ne 0$. Let $B=\cup_\alpha B_\alpha$. We get the following relations in the group $T_{Z,n}(X)$:

    $$\langle f_1^\alpha,\dots, f_n^\alpha\rangle_\otimes=\sum_{\beta\in B_\alpha} l_{\alpha,\beta}\langle h_\beta,f_2^\alpha,\dots, f_n^\alpha\rangle_\otimes.$$

    So we get:

    $$x=\sum_{\alpha\in A} n_\alpha\sum_{\beta\in B_\alpha} l_{\alpha,\beta}\langle h_\beta,f_2^\alpha,\dots,f_n^\alpha\rangle_\otimes=$$
    $$\sum_{\beta\in B} \sum_{\alpha\in A_\beta} n_\alpha l_{\alpha,\beta}\langle h_\beta,f_2^\alpha,\dots,f_n^\alpha\rangle_\otimes.$$

    Let $Z_\beta$ be the union of $Z$ and $\sup((h_\beta))$. Denote by $y_\beta$ the following element in the group $T_{ Z_\beta, n-1}$:
    $$y_\beta = \sum_{\alpha\in A_\beta} n_\alpha l_{\alpha,\beta}\langle f_2^\alpha,\dots,f_n^\alpha\rangle_\otimes.$$

We get:

    $$\theta_{\otimes, Z,n}(x)=\sum_{\beta\in B} h_\beta\otimes (\theta_{\otimes, Z_\beta, n-1}(y_\beta)).$$

    By previous lemma, we get $\theta_{\otimes, Z_\beta, n-1}(y_\beta)=0$. It follows from the inductive assumption that $y_\beta=0$.

Assume that $\beta\in B$. For any $z\in Z$ we have $\ord_z(h)=0$. Define the map $L_\beta\colon T_{Z_\beta, n-1}(X)\to T_{ Z, n}(X)$ given by the formula
$$\langle g_2,\dots, g_n\rangle_\otimes\mapsto \langle h_\beta, g_2,\dots, g_n\rangle_\otimes.$$

We get

$$x=\sum_{\beta\in B} L_\beta(y_\beta)=\sum_\beta L_\beta(0)=0.$$
\end{proof}

\subsection{The map $\wt{\mc W}$ is an isomorphism}
We recall that $\k$ is algebraically closed. It follows from the proof of Lemma \ref{lemma:M_is_acyclic} that $\wt N_{\geq m-1}(\k,m)_m$ is isomorphic to $\Lambda^m\k^\times$. The group $\L_{\geq m-1}(\k,m)_m$ is isomorphic to $\Lambda^m\k^\times$ by definition. Moreover, it is easy that the natural map $\wt{\mc W}\colon\wt N_{\geq m-1}(\k,m)_m\to \L_{\geq m-1}(\k,m)_m$ is an isomorphism. So $\wt{\mc W}$ is an isomorphism in degree $m$.
Let us show that it is also an isomorphism in degree $m-1$.
Let $X$ be a smooth projective curve. Define a map 
$$\ph_X\colon V_n(X)\to \wCH(\k,m)_{m-1}$$
by the formula
    $\ph_X(\langle f_1,\dots, f_n\rangle)=[f_1,\dots, f_n]_X$. The cycle is admissible; relations are satisfied by Proposition \ref{prop:multiplicative_relations_Chow}. 
     Define a map $\lambda\cl \L(\k,m)_{m-1}\to \wCH(\k,m)_{m-1}$
by the formula $\lambda([X, a])=\ph_{\wt X}(\theta_{\Lambda, n}^{-1}(\psi^*(a)))$, where $\psi\colon \wt X\to X$ is the normalization. It is easy to check that the relations defined $\L(\k,m)_{m-1}$ are satisfied. 

Denote by $\pi$ the natural projection $\CH(\k,m)_{m-1}\to \wCH(\k,m)$. Let us show that for any $a\in \CH(\k,m)_{m-1}$ we have $\lambda\W(a)=\pi(a)$. By Theorem \ref{th:Chow_genereted_disjoint_divisors} the group $\CH(\k,m)_{m-1}$ is generated by the subgroups $\CH(\k,m)_{m-1}'$ and $\ker\pi\cap\ker\W$. When $a\in \CH(\k,m)_{m-1}'$ the formula follows from the definition of the map $\theta_{\Lambda,n}$. When $a\in \ker\pi\cap\ker\W$ the formula is obvious as both sides are zero.

Let us show that $\lambda$ vanishes on the image of $d$. We need to show that for any $a\in \L(\k,m)_{m-2}$ we have $\lambda(d(a))=0$. By Theorem \ref{th:W_is_surjective} there is $a'\in \CH(\k,m)_{m-2}$ such that $a=\mc W(a')$. We get $\lambda(d(a))=\lambda(d(\mc W(a')))=\lambda(\mc W(d(a')))=\pi(d(a'))=0$.

So we get a map $\overline{\lambda}\colon \wL(\k,m)_{m-1}\to \wCH(\k,m)_{m-1}$. The map $\wW$ is surjective by Theorem \ref{th:W_is_surjective}. It is injective as $\lambda\circ\W=\pi$ and so $\overline{\lambda}\circ\wW=id$.

\section{Strong Suslin reciprocity law}
\label{sec:strong_suslin_reciprocity_law}
The goal of this section is to give some generalization of the main result of \cite{bolbachan_2023_chow}. As this generalization is rather straightforward, our arguments are only sketched and we refer the reader to \cite{bolbachan_2023_chow}.
We assume $\k$ to be algebraically closed. Let $F$ be some extension of $\k$. Denote by $\Gamma(F,m)$ the polylogarithmic complex of a field $F$. It is a cochain complex concentrated in degrees $[1,m]$ having the following form: 
$$\Gamma(F,m)\colon \mathcal B_m(F)\xrightarrow{\delta_m} \mathcal B_{m-1}(F)\otimes F^\times\xrightarrow{\delta_m}\dots\xrightarrow{\delta_m}\mathcal B_2(F)\otimes \Lambda^{m-2}F^\times\xrightarrow{\delta_m}\Lambda^m F^\times.$$
Let $(F,\nu)$ be a discrete valuation field. We recall that $\ol F_\nu$ denotes the residue field. The following proposition was proved in \cite{goncharov1995geometry}:

\begin{proposition}
\label{prop:tame_symbol}
Let $(F,\nu)$ be a discrete valuation field and $m\geq 2$. There is a unique morphism of complexes $\ts_\nu\colon \Gamma(F,m)\to \Gamma(\ol F_\nu, m-1)[-1]$ satisfying the following properties:

\begin{enumerate}
   \item For any uniformiser $\pi$ and units $u_2,\dots, u_m\in F$ we have $\ts_\nu(\pi\wedge u_2\wedge\dots \wedge u_m)=\overline {u_2}\wedge \dots \wedge \overline{u_m}$.
    \item For any $a\in F\bs \{0,1\}$ with $\nu(a)\ne 0$, any integer $k$ satisfying $2\leq k\leq m$ and any $b\in \Lambda^{m-k}F^\t$ we have $\ts_\nu(\{a\}_k\otimes b)=0$.
    \item For any unit $u$, any integer $k$ satisfying $2\leq k\leq m$ and $b\in \Lambda^{m-k}F^\t$ we have $\ts_\nu(\{u\}_k\otimes b)=-\{\overline u\}_k\otimes \ts_\nu(b)$.
\end{enumerate}
\end{proposition}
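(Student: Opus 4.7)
The plan is to define $\ts_\nu$ explicitly on generators of each term $\mathcal B_k(F)\otimes \L^{m-k}F^\times$, establish uniqueness by reducing every input to a canonical form, and prove existence by descending from the free abelian group $\Q[F^\times]\otimes \L^{m-k}F^\times$ to $\mathcal B_k(F)\otimes \L^{m-k}F^\times$ while verifying compatibility with the differential $\delta_m$.

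\emph{Uniqueness.} Fix a uniformiser $\pi\in F$. Using the splitting $x=\pi^{\nu(x)}u_x$ with $u_x$ a unit, any $b\in \L^{m-k}F^\times$ can be expressed by multilinearity as a sum of terms of the form $\pi\wedge u_{k+2}\wdw u_m$ (exactly one uniformiser) or $u_{k+1}\wdw u_m$ (all units). Combined with the dichotomy $\nu(a)\ne 0$ versus $a$ a unit on the $\{a\}_k$ factor, the conditions (1)--(3) then determine the value of $\ts_\nu$ on every generator $\{a\}_k\otimes b$; on the rightmost term $\L^m F^\times$ the classical tame symbol of Proposition \ref{prop:tame_symbol_classiacal} is already unique.

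\emph{Existence.} First define a candidate $\wt\ts_\nu$ on $\Q[F^\times]\otimes \L^{m-k}F^\times$ by the formulas in (1)--(3), with right-hand sides interpreted via the classical tame symbol of Proposition \ref{prop:tame_symbol_classiacal}. Replacing $\pi$ by $v\pi$ for a unit $v$ only adds wedge-of-units terms that the classical tame symbol sends to zero, so the construction is independent of the choice of uniformiser. The main technical step is to show $\wt\ts_\nu$ vanishes on $\mathcal R_k(F)\otimes \L^{m-k}F^\times$. For the five-term relation $\sum_i(-1)^i\{c.r.(z_1,\dots,\hat z_i,\dots,z_5)\}_2$ defining $\mathcal R_2$, when all five cross-ratios are units whose residues lie in $\ol F_\nu\setminus\{0,1\}$, the image under $\wt\ts_\nu\otimes \mathrm{id}$ (combined with a single use of condition (3)) is precisely the five-term relation over $\ol F_\nu$ tensored with the classical tame symbol of $b$, and so vanishes in $\mathcal B_2(\ol F_\nu)$. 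In the degenerate configurations---some cross-ratio having nontrivial valuation, or reducing to $0,1,\infty$---a case analysis using condition (2) together with the identifications $\{0\}_2=\{1\}_2=\{\infty\}_2=0$ shows that the surviving terms still sum to zero. An analogous analysis dispatches the higher relations defining $\mathcal R_k$ for $k\geq 3$; this case analysis is the main obstacle, since one must track how each Goncharov relation splits according to the valuations of its arguments and then recognise the surviving piece as a valid relation on the residue field.

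\emph{Morphism of complexes.} Finally, verify $\ts_\nu\circ\delta_m\pm \delta_{m-1}\circ\ts_\nu=0$ on generators (sign dictated by the $[-1]$ shift). For $\{u\}_k\otimes b$ with $u$ a unit and $k\geq 3$, both sides produce $\{\ol u\}_{k-1}\otimes \ol u\wedge \ts_\nu(b)$ (up to sign) via Lemma \ref{lemma:leibniz_rule_tame_symbol} together with two applications of condition (3). For $k=2$ one uses that $\ol u\ne 0,1$ implies $1-u$ is a unit with residue $1-\ol u$, so the boundary contribution $u\wedge(1-u)\wedge b$ reduces correctly. For $\{a\}_k\otimes b$ with $\nu(a)\ne 0$, both sides vanish by condition (2) applied termwise to $\delta_m(\{a\}_k\otimes b)$, using that $\{a\}_{k-1}$ inherits nontrivial valuation in each surviving summand.
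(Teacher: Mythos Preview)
The paper does not prove this proposition; it simply cites Goncharov \cite{goncharov1995geometry}. Your outline follows the natural strategy and is broadly what Goncharov does, but there is one substantive gap in the existence step.

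Your treatment of $\mathcal R_k$ for $k\geq 3$ is not actually ``analogous'' to the five-term case. In Goncharov's definition, the relation subgroup $\mathcal R_k(F)$ for $k\geq 3$ is \emph{not} given by a finite list of explicit functional equations; it is generated (together with $\{0\},\{1\},\{\infty\}$) by elements of the form $\alpha(0)-\alpha(1)$, where $\alpha(t)\in\Q[\mathbb P^1(F(t))]$ runs over all elements with $\delta_k(\alpha(t))=0$ in $\mathcal B_{k-1}(F(t))\otimes F(t)^\times$. Showing that your candidate $\wt\ts_\nu$ kills $\mathcal R_k(F)\otimes\Lambda^{m-k}F^\times$ therefore cannot be done by a direct case analysis on valuations of arguments. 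It requires an induction on $k$: one needs the specialization map $\{a\}\mapsto\{\ol a\}$ (for units) or $0$ (for non-units) to be already well-defined on $\mathcal B_{k-1}$, then to extend $\nu$ to $F(t)$ via the Gauss valuation, and to argue that the image of $\alpha(0)-\alpha(1)$ lands in $\mathcal R_k(\ol F_\nu)$ by applying the inductive hypothesis to the condition $\delta_k(\alpha(t))=0$. This inductive mechanism is the real content of Goncharov's proof, and your phrase ``an analogous analysis dispatches the higher relations'' hides it entirely.

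A minor point on the morphism-of-complexes check: the case $k=2$ with $\nu(a)\neq 0$ is not covered by ``$\{a\}_{k-1}$ inherits nontrivial valuation'', since $\delta_m(\{a\}_2\otimes b)=a\wedge(1-a)\wedge b$ lives in $\Lambda^m F^\times$ with no Bloch-group factor. Here one must verify directly that the classical tame symbol of $a\wedge(1-a)\wedge b$ vanishes, using $\ol{1-a}=1$ when $\nu(a)>0$, and the rewriting $a\wedge(1-a)=a\wedge(-1)+a\wedge(1-1/a)$ together with $(-1)=0$ in $F^\times\otimes\Q$ when $\nu(a)<0$.
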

The sign in the item $3$ comes from the fact that the element $\{u\}_k$ sits in degree $1$ which is odd (or corresponds to elements in $K_{2k+1}(F)$ and the number $2k+1$ is odd).

Denote by $\F_d$ the category of finitely generated (as fields) extensions of $\k$ of transcendence degree $d$. As any extension of finitely generated fields having the same degree is finite, any morphism in $\F_d$ is a finite extension.  For $F\in \F_d$, denote by $\dval(F)$ the set of discrete valuations given by a prime divisor on some birational model of $F$. When $F\in\F_1$ this set is equal to the set of all discrete valuations which are trivial on $\k$. In this case, we denote this set simply by $\val(F)$. Let $X$ be a smooth variety and $D\subset X$ be a prime divisor on $X$. Abusing notations, we denote by $\ts_D$ the map $\ts_{\nu_D}$, where $\nu_D$ is a discrete valuation corresponding to $D$. 

For $F\in\F_1$ denote
$$Tot_F:=\sum\limits_{\nu\in\val(F)}\ts_\nu\colon \tau_{\geq m}\Gamma(F, m+1)\to (\tau_{\geq m-1}\Gamma(\k, m))[-1].$$
It was proved in \cite{rudenko2021strong} that $Tot_F$ induces zero map on cohomology. In particular there is a homotopy between $Tot_F$ and the zero map.

\begin{definition}
\label{def:SRL}
    Let $F\in \F_1$. 
    \emph{A lifted reciprocity map} on the field $F$  is a map $$h\colon \L^{m+1}(F^\t)\to \Gamma(\k,m)_{m-1}/\im \delta_{m}$$
    such that:
    \begin{enumerate}
    \item The map $h$ gives a homotopy between $Tot_F$ and the zero map:
    \begin{equation}
    \label{diagram:SRL_def}
        \begin{tikzcd}
            (\B_2(F)\otimes \L^{m-1}F^\t)/         \im(\delta_{m+1})\ar[d,"Tot_F"]\ar[r,"\delta_{m+1}"]& \L^{m+1}F^\t \ar[d,"Tot_F"]\ar[dl,"h"]\\
            (\B_2(\k)\otimes \L^{m-2}\k^\t)/\im(\delta_m)\ar[r,"-\delta_{m}"] & \L^{m}\k^\t          
        \end{tikzcd}
    \end{equation}
        \item $h(f_1\wedge f_2\wedge c_3\wdw c_{m+1})=0$
        for any $f_i\in F$ and  $c_i\in\k$.

    \end{enumerate}
\end{definition}

Denote by $\Set$ the category of sets. 
Define a contravariant functor $$\SRL\colon \F_1\to \Set$$ as follows. For any $F\in \F_1$ the set $\SRL(F)$ is equal to the set of all lifted reciprocity maps on $F$. If $j\colon F_1\emb F_2$ then $\SRL(j)(h_{F_2})=h_{F_1}$,where $h_{F_1}(\alpha_1\wdw \alpha_{m+1}):=\dfrac 1{\deg j}h_{F_2}(j(\alpha_1)\wdw j(\alpha_{m+1}))$. The fact that in this way we get a functor can be proved similarly to \cite[Proposition 2.1]{bolbachan_2023_chow}.

 The goal of this section is to prove the following two theorems:

\begin{theorem}
\label{th:strong_suslin_reciprocity_law_field}
    To any field $F\in\F_1$, one can associate a lifted reciprocity map $\mc H_F$ on the field $F$ such that:
    \begin{enumerate}
        \item For any embedding $j\colon F_1\emb F_2$ in $\F_1$ we have $$\mc H_{F_1}(a)=(1/\deg j)\mc H_{F_2}(j(a)).$$
        \item We have: $$\mc H_{\k(t)}(t\wedge (1-t)\wedge (1-a/t)\wedge c_4\wdw c_{m+1})=-\{a\}_2\otimes c_4\wdw c_{m+1}.$$
    \end{enumerate}
    Moreover the family of the maps $\mc H_F$ are uniquely determined by the properties stated above.
    \end{theorem}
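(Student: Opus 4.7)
The plan is to prove uniqueness first and then existence.

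\textbf{Uniqueness.} Given two such families $\mc H, \mc H'$, set $D_F = \mc H_F - \mc H'_F$. Both maps make the upper and lower triangles of diagram \eqref{diagram:SRL_def} commute with the same $Tot_F$, so $\delta_m \circ D_F = 0$, meaning that $D_F$ factors through $H^{m-1}(\Gamma(\k,m))$. Property (2) of Definition \ref{def:SRL} yields $D_F(f_1\wedge f_2\wedge c_3\wdw c_{m+1}) = 0$, while property (2) of the theorem yields $D_{\k(t)}(t\wedge(1-t)\wedge(1-a/t)\wedge c_4\wdw c_{m+1}) = 0$. Functoriality (1) then propagates these vanishings to every finite extension of $\k(t)$: for any embedding $j \cl \k(t)\emb F$, we have $D_{\k(t)}(x) = (1/\deg(F/\k(t)))\, D_F(j(x))$, so vanishing of $D_F$ on the image of $j$ gives vanishing of $D_{\k(t)}$. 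The remaining step is to show that pullbacks of elements of these two forms span $\L^{m+1}(F^\t)$ for every $F \in \F_1$, which I would carry out using the moving-lemma technology of Section \ref{sec:moving_lemma} together with the decomposition arguments of Lemmas \ref{lemma:globally_regular_first}--\ref{lemma:globally_regular_second}.

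\textbf{Existence.} The natural strategy is to extend the weight-$2$ construction of \cite{bolbachan_2023_chow} to general $m$. Given $f_1\wdw f_{m+1} \in \L^{m+1}(F^\t)$, rewrite it as a linear combination of wedges in which the last $m-2$ slots carry elements of $\k^\t$ and the first three slots carry functions in $F^\t$; apply the weight-$2$ reciprocity map of \cite{bolbachan_2023_chow} to the three-factor part, and tensor the output with the constant wedge to land in $\B_2(\k)\otimes\L^{m-2}\k^\t$. The rewriting step uses the identity
\[
f_1\wedge f_2 = \tfrac{1}{2}\bigl(f_1/h \wedge f_2 + f_1\wedge f_2/h + f_1/f_2\wedge h\bigr),
\]
from the unnumbered lemma in Section \ref{sec:W_surjective}, applied iteratively to push divisor-bearing factors out of the last $m-2$ slots. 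The homotopy identity $-\delta_m\circ \mc H_F = Tot_F$ should then follow, via the tame symbol rules of Proposition \ref{prop:tame_symbol} applied slot by slot, from the weight-$2$ Suslin reciprocity identity of \cite{bolbachan_2023_chow}. The two normalization conditions (1) and (2) of the theorem are direct consequences of the explicit formula and the naturality of the tame symbol under field extensions.

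\textbf{Main obstacle.} The hard step is proving well-definedness of $\mc H_F$ on $\L^{m+1}(F^\t)$: the formula above a priori depends on how one rewrites $f_1\wdw f_{m+1}$, and it must respect every multilinearity and antisymmetry relation modulo $\im(\delta_m)$. This is equivalent to a family of polylogarithmic identities in $\Gamma(\k,m)_{m-1}/\im(\delta_m)$, each of which has to be reduced to a known identity in $\B_2(\k)$. The uniqueness statement, proved first, is a useful lever here: two candidate maps constructed along different decomposition paths, each verified to satisfy the axioms of a lifted reciprocity map, must coincide, so well-definedness can be organized axiomatically rather than by directly checking each relation. I expect that this reduction, combined with the Beilinson--Soule vanishing used throughout the paper and the tame-symbol compatibilities, will carry the construction through.
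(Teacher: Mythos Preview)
Your existence strategy contains a fatal gap. You propose to rewrite every element of $\L^{m+1}(F^\t)$ as a combination of terms $g_1\wedge g_2\wedge g_3\wedge c_4\wdw c_{m+1}$ with $c_i\in\k^\t$, but this is impossible once $m\ge 3$. Over $\Q$ one has $F^\t\cong\k^\t\oplus P$ with $P=F^\t/\k^\t$ free of infinite rank (principal divisors on the curve), so $\L^{m+1}(F^\t)=\bigoplus_j\L^{m+1-j}(\k^\t)\otimes\L^j(P)$, and your span covers only the pieces with $j\le 3$; the summands with $j\ge 4$ are nonzero and not reached. The identity $f_1\wedge f_2=\tfrac12(f_1/h\wedge f_2+f_1\wedge f_2/h+f_1/f_2\wedge h)$ does not help: in the paper it is used to make the divisors of the $f_i$ \emph{disjoint}, not to lower the $P$-degree, and it cannot do so since $f_1/h,\,f_2/h,\,f_1/f_2,\,h$ remain non-constant. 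Your uniqueness sketch has the same spanning problem, and in addition the direction of property~(1) is backwards for what you need: the formula $D_{\k(t)}(x)=(1/\deg)\,D_F(j(x))$ lets you deduce $D_{\k(t)}=0$ from vanishing of $D_F$ on $\im j$, not the converse.

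The paper's route is entirely different and does not attempt to decompose $\L^{m+1}(F^\t)$ for general $F$. One first shows directly (Proposition~\ref{prop:SRL_P1}) that a lifted reciprocity map on $\k(t)$ exists and is unique; here the decomposition $\L^{m+1}\k(t)^\t=\im(\delta_{m+1})+\bigl(\L^2\k(t)^\t\wedge\L^{m-1}\k^\t\bigr)$ \emph{does} hold, and that is the crux. Then, for each finite extension $\k(t)\subset F$, one constructs a \emph{norm map} $N_{F/\k(t)}\colon\SRL(\k(t))\to\SRL(F)$ between the sets of lifted reciprocity maps, and defines $\mc H_F:=N_{F/\k(t)}(h_{\k(t)})$. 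The norm is built valuation-by-valuation on $L=F(t)$: for a general $\nu\in\dval(L)$ one lifts an element of $\L^{m+1}\ol L_\nu^\t$ to $\L^{m+2}L^\t$ (Definition~\ref{def:lift}, Theorem~\ref{th:main_exact_sequence}) and then pushes down along all \emph{special} valuations, whose residue fields are $\k(t)$ or $F$ where the reciprocity map is already known. Independence of the lift and of the chosen generator of $F/\k(t)$, as well as the fact that the result is again a lifted reciprocity map, is where the two-dimensional Parshin-type reciprocity (Theorem~\ref{th:Parshin_sum_point_curve}) enters. This is exactly the transfer machinery of \cite{bolbachan_2023_chow}, rerun with the weight raised from $2$ to $m$.
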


    \begin{theorem}
    \label{th:two_dimensional_reciprocity_law}
        \begin{enumerate}
            \item For any field $L\in\F_2$ and any $b\in\L^{m+2}(L^\t)$ we have
$$\sum\limits_{\nu\in\dval(L)}\mc H_{\ol L_\nu}(\ts_\nu(b))=0.$$
           \item Let $S$ be a smooth proper surface over $\k$ and $b\in\L^{m+2} k(S)^\t$, such that $b$ is strictly regular. Then

            $$\sum\limits_{D\subset S}\mc H_{\k(D)}(\ts_D(b))=0.$$
            The sum in this formula is taken over all prime divisors $D$ on $S$.
        \end{enumerate}
        
    \end{theorem}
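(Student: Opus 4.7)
My plan is to prove part (2) first and then deduce part (1) by choosing a suitable birational model. For part (2), set $A := \sum_{C \subset S} \mc H_{\k(C)}(\ts_C(b))$, viewed as an element of $\B_2(\k) \otimes \L^{m-2}\k^\t / \im\delta_m$. The first step is to check that $A$ is $\delta_m$-closed: using the defining homotopy relation $-\delta_m \circ \mc H_F = Tot_F$ of a lifted reciprocity map, I compute
$$
-\delta_m(A) = \sum_{C \subset S} Tot_{\k(C)}(\ts_C(b)) = \sum_{p \in S} \sum_{C \ni p} \ts_p\bigl(\ts_C(b)\bigr),
$$
where the outer sum is over closed points of $S$. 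Theorem \ref{th:Parshin_reciprocity_law}, applied to the strictly regular $b$ on $S$, says that for each fixed $p$ the inner sum vanishes, so $\delta_m(A) = 0$ and $A$ represents a class in $H^{m-1}(\G(\k,m))$.

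The hard step is to upgrade this closedness to the vanishing of $A$ itself in $\G(\k,m)_{m-1}/\im\delta_m$. Here I would exploit the structural results of the preceding sections together with the uniqueness clause of Theorem \ref{th:strong_suslin_reciprocity_law_field}. Using a surface analogue of Proposition \ref{pr:weakly_adm_is_str_adm_intro}, I reduce to elements $b = g_1 \wdw g_{m+2}$ whose divisors pairwise share no components; by clause (2) of Definition \ref{def:SRL} any wedge containing two constant factors contributes zero. The sum $A(S,b)$ is invariant under blow-ups at closed points, since on an exceptional $\P^1$ the restriction of a strictly regular $b$ forces $\ts_E(b)$ to have a constant factor, by an argument parallel to Proposition \ref{prop:differential_strictly_regular_elements}. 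Combining this blow-up invariance with Theorem \ref{th:Lambda_is_generated_by_rationally_connected} (in its surface incarnation) further reduces the verification to the case $S = \P^1 \times \P^1$, where the explicit normalization $\mc H_{\k(t)}(t \wedge (1-t) \wedge (1-a/t) \wedge c_4 \wdw c_{m+1}) = -\{a\}_2 \otimes c_4 \wdw c_{m+1}$ and functoriality let me evaluate $A$ directly and identify it with an explicit coboundary in $\im\delta_m$.

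For part (1), given $L \in \F_2$ and $b \in \L^{m+2}(L^\t)$, only finitely many $\nu \in \dval(L)$ satisfy $\ts_\nu(b) \neq 0$. By Hironaka resolution applied to any projective model of $L$, followed by iterated blow-ups at the centers of the finitely many remaining non-divisorial contributing valuations, I obtain a smooth proper surface $S$ with $\k(S) = L$ on which $b$ is strictly regular and on which each contributing $\nu$ is realized as a divisor $C \subset S$; by functoriality of $\mc H$ (Theorem \ref{th:strong_suslin_reciprocity_law_field}(1)) the two sums agree, so (1) follows from (2). The principal obstacle is the upgrade of $\delta_m(A)=0$ to $A=0$ modulo $\im\delta_m$; a priori $A$ could be a nontrivial class in $H^{m-1}(\G(\k,m))$, and ruling this out requires the delicate reduction-to-generators outlined above, generalizing the weight-two argument of \cite{bolbachan_2023_chow} with extra bookkeeping to accommodate the $m-2$ constant slots.
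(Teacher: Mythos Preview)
Your strategy of proving (2) first and deducing (1) is the reverse of the paper's: the paper obtains (1) by reference to Corollary~1.13 of \cite{bolbachan_2023_chow} --- the two-dimensional reciprocity is built into the very construction of the maps $\N_\nu$ and hence of $\mc H$ via the norm --- and then gets (2) from (1) together with Lemma~\ref{lemma:finitnes_of_sum}. Your deduction of (1) from (2) is fine and uses that same lemma in the opposite direction.

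The genuine problems are in your direct argument for (2).

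\emph{First}, Theorem~\ref{th:Lambda_is_generated_by_rationally_connected} produces generators of the form $[X\times\P^1,w]$ with $X$ an arbitrary complete curve, not $\P^1\times\P^1$; there is no further reduction available. Moreover, to invoke that theorem you must know that $A$ vanishes on $\im(d)$ coming from $3$-folds, and blow-up invariance (a birational statement about surfaces) is not the same thing. This vanishing does hold --- it follows from the pointwise form of Theorem~\ref{th:Parshin_reciprocity_law} applied to codimension-$2$ subvarieties of a $3$-fold --- but you never say so, and it is the crucial step that makes $A$ descend to $\L(\k,m)_{m-2}/\im(d)$.

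\emph{Second}, and more seriously, even on $X\times\P^1$ (or on $\P^1\times\P^1$) the ``explicit evaluation'' you propose is not a reduction at all. Curves in $\P^1\times\P^1$ have arbitrary genus, so evaluating $\sum_C\mc H_{\k(C)}(\ts_C b)$ there still requires knowing $\mc H_F$ for every $F\in\F_1$, not just for $\k(t)$. The only access we have to $\mc H_F$ is through the norm construction $\mc H_F=N_{F/\k(t)}(h_{\k(t)})$ of Section~\ref{sec:strong_suslin_reciprocity_law}, and unwinding that construction to check the reciprocity on $F(t)$ is precisely the paper's proof of (1). Your computational endpoint therefore collapses back into the argument you were trying to avoid, and the ``upgrade from $\delta_m(A)=0$ to $A=0$'' remains unaccomplished.
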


\subsection{The outline of the proof}
In Section \ref{subsec:SRL_1} we will show that there is the unique lifted reciprocity map on the field of rational functions $\k(t)$. Denote it by $\mc H_{\k(t)}$. Let $F\in\F_1$ be any field. Choose some embedding $j\colon \k(t)\emb F$. To define $\mathcal H_F$, we extend a lifted reciprocity map $\mathcal H_{\k(t)}$ from the field $\k(t)$ to the field $F$. For this we solve the more general problem: for any finite extension $j'\colon F_1\emb F_2$ in $\F_1$ we construct the canonical map $N_{F_2/F_1}\colon \SRL(F_1)\to \SRL(F_2)$. More precisely, we will prove the following theorem:

\begin{theorem}
\label{th:norm_map}For any embedding of fields $j\colon F_1\emb F_2$ one can define the canonical map $$N_{F_2/F_1}\colon \SRL(F_1)\to \SRL(F_2)$$ satisfying the following properties:
\begin{enumerate}
\item $\SRL(j)\circ N_{F_2/F_1}=id$. \item If $F_1\subset F_2\subset F_3$ is a tower of extension from $\F_1$ then $N_{F_3/F_1}=N_{F_3/F_2}\circ N_{F_2/F_1}$.
\end{enumerate}
\end{theorem}

Item $(i)$ shows that $N_{F_2/F_1}$ is indeed an extension, while item $(ii)$ shows that this extension is functorial.

Let us explain the proof of Theorem \ref{th:strong_suslin_reciprocity_law_field}. The existence follows from Theorem \ref{th:norm_map} together with the fact that the element $N_{F/\k(t)}(\mathcal H_{\k(t)})$ does not depend on the embedding $j\colon \k(t)\emb F$. The uniqueness essentially follows from Galois descent for Milnor $K$-theory. 

Let us outline the proof of Theorem \ref{th:norm_map}. The proof of this theorem is in many respects similar to the construction of the norm map on Milnor $K$-theory. (See \cite{bass1973milnor,suslin1979reciprocity,MILNOR1969/70, kato1980generalization}). That is the reason why we denote it by the letter $N$. (Note that compared to the norm map in Milnor $K$-theory, in our case, the norm map is directed in the opposite direction. The reason for this is that while Milnor $K$-theory gives a covariant functor, the functor $\SRL$ is contravariant.) 

We define the map $N_{F_2/F_1}$ in two steps. First, for any discrete valuation $\nu$ of the field $F(t)$ we define the canonical map
$$\N_\nu\colon \SRL(F)\to \SRL(\overline {F(t)}_\nu).$$
Using this map, for any extension $F_1\emb F_2$ with a generator $a$ we will define the norm map $N_{F_2/F_1, a}\colon \SRL(F_1)\to \SRL(F_2)$ (see Definition \ref{def:norm_map}). Using ideas from \cite{suslin1979reciprocity} we will show that this map does not depend on $a$ and will have finished the proof of Theorem \ref{th:norm_map}. 

 Let us give the outline of the construction of the map $\mc N_\nu$. Let $F\in \F_1$. It is useful to divide the discrete valuations of the field $F(t)$ into two classes, namely  the general valuations and the special ones (see Definition \ref{def:types_of_valuations}). For the special valuations the definition of the map $\N_\nu$ is straightforward. Let us explain how to define $\mc N_\nu(h)$ for general valuations. We want to define $\mc N_\nu$ so that an analog of Theorem \ref{th:two_dimensional_reciprocity_law} would hold. This means that for any $h\in\SRL(F)$ and any $b\in \L^{m+2}F(t)^\times$ the following formula should hold:
 $$\sum\limits_{\nu\in \dval(F(t))}\mc N_\nu(h)(b)=0.$$
It turns out that this formula, together with the definition of $\mc N_\nu$ for special valuations, determines $\mc N_\nu$ completely. 
 
 To give a formal definition, we need a notion of \emph{lift}. Let $\nu\in \dval(F(t))$ be a general valuation and $n,j\in\mathbb N$. A lift of the element $a\in \Gamma(\overline{F(t)}_\nu, n)_j$ is an element $b\in \Gamma(F(t), n+1)_{j+1}$, such that the tame-symbol $\ts_\nu(b)$ is equal to $a$ and the tame-symbol of $b$ at any other general valuation vanishes. The set of all lifts of the element $a$ is denoted by $\mathcal L(a)$. It is easy to show that for any $a\in \Gamma(\ol {F(t)}_\nu,n)_j$ the set $\mathcal L(a)$ is non-empty. Let $\nu\in \dval(F(t))$ be a general valuation, $h\in \SRL(F)$ and $a\in \Lambda^{m+1}\overline{F(t)}_\nu$. Choose some lift $b\in \mathcal L(a)$ and define the element $\N_\nu(h)(a)$ by the following formula:
$$\mathcal N_\nu(h)(a)=-\sum\limits_{\mu\in\dval(F(t))_{sp}}\mathcal N_{\mu}(h)(\ts_\mu(b)).$$
Here $\dval(F(t))_{sp}$ denotes the set of all special valuations. In this formula the lifted reciprocity maps $\N_{\mu}(h)$ are already defined because $\mu$ is special. It remains to show that this expression does not depend on the choice of $b$ and for fixed $h$ gives a lifted reciprocity map on the field $\overline{F(t)}_\nu$. This can be done using certain properties of the lift and some version of the Parshin reciprocity law.


\subsection{Case of of the field $\k(t)$}
\label{subsec:SRL_1}
Let us recall the following classical statement:
\begin{theorem}[Weil reciprocity law]
\label{theorem:Weil_rec_law}
    Let $X$ be a smooth projective curve over $\k$ and $f,g\in\k(X)^\times$. We have
    $$\prod_{x\in X(\k)}\ts_x(f\wedge g)=1.$$
\end{theorem}
We need the following lemma:
\begin{lemma}
\label{lemma:Res_is_zero}
Let $a\in \Gamma(\k(t),m+1)_{m}$. If $\delta_{m+1}(a)$ lies in the image of the multiplication map $\L^2 \k(t)^\t\otimes \L^{m-1}\k^\t\to\L^{m+1}\k(t)^\t$, then we have:
$$\sum\limits_{\nu\in\val(\k(t))}\ts_\nu(a)=0\in \im(\delta_m).$$
\end{lemma}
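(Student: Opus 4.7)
The plan is to establish the conclusion in two stages: first that $\sum_\nu \ts_\nu(a)$ is a cocycle in $\Gamma(\k,m)$, and then that it actually lies in the image of $\delta_m\cl\B_3(\k)\otimes \L^{m-3}\k^\t \to \B_2(\k)\otimes \L^{m-2}\k^\t$.

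For the first stage I would invoke Proposition \ref{prop:tame_symbol}: each $\ts_\nu$ is a morphism of complexes $\Gamma(\k(t),m+1)\to \Gamma(\k,m)[-1]$, so up to a sign
$$\delta_m\!\left(\sum_{\nu\in\val(\k(t))} \ts_\nu(a)\right) = -\sum_\nu \ts_\nu(\delta_{m+1}(a)).$$
By hypothesis we may write $\delta_{m+1}(a)=\sum_i f_i\wedge g_i\wedge c_{i,3}\wdw c_{i,m+1}$ with all $c_{i,j}\in\k^\t$. Since every valuation $\nu\in\val(\k(t))$ is trivial on $\k$, each $c_{i,j}$ is a unit, and Lemma \ref{lemma:leibniz_rule_tame_symbol} gives
$$\ts_\nu(f_i\wedge g_i\wedge c_{i,3}\wdw c_{i,m+1})=\ts_\nu(f_i\wedge g_i)\wedge c_{i,3}\wdw c_{i,m+1}.$$
Summing over $\nu$, the inner factor $\sum_\nu \ts_\nu(f_i\wedge g_i)$ vanishes by Weil reciprocity on $\P^1_\k$, so $\sum_\nu \ts_\nu(a)$ is closed in $\Gamma(\k,m)$.

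For the second stage I would exploit the freedom to modify $a$ modulo the image of $\delta_{m+1}\cl\B_3(\k(t))\otimes \L^{m-2}\k(t)^\t\to \B_2(\k(t))\otimes\L^{m-1}\k(t)^\t$. Replacing $a$ by $a-\delta_{m+1}(b)$ preserves $\delta_{m+1}(a)$ because $\delta^2=0$, and changes $\sum_\nu\ts_\nu(a)$ only by
$$\sum_\nu\ts_\nu(\delta_{m+1}(b))=-\delta_m\!\left(\sum_\nu\ts_\nu(b)\right)\in\im(d).$$
The plan is thus to reduce $a$ modulo $\im(\delta_{m+1})$ to a normal form in which every summand has the shape $\{f\}_2\otimes c_3\wdw c_{m+1}$ with all $c_j\in\k^\t$; for such terms, items (2) and (3) of Proposition \ref{prop:tame_symbol} yield $\ts_\nu=0$ identically, since any tame symbol of a wedge of units vanishes. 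To perform the reduction I would decompose $a$ using the splitting $\k(t)^\t\cong \k^\t\oplus\bigoplus_{v\in\k}\Z\langle t-v\rangle$ into pieces indexed by the number of non-constant factors appearing in the $\L^{m-1}\k(t)^\t$ slot, and use five-term relations in $\B_2$ together with the constraint imposed by the hypothesis (which forces tight cancellation between the non-constant factors of the wedge part of $a$ and those coming from $f_i\wedge(1-f_i)$) to eliminate non-constant factors one at a time.

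The main obstacle is executing this normal-form reduction. Being a cocycle in $\Gamma(\k,m)$ does not automatically imply being a coboundary (this would be a strong Beilinson--Soul\'e type vanishing statement that is not available), so the argument must genuinely use the specific hypothesis to construct an explicit $b\in\B_3(\k(t))\otimes\L^{m-2}\k(t)^\t$ whose boundary kills the non-constant components of $a$. I expect the induction to run on the total number of factors of the form $(t-v)$ appearing in the $\L^{m-1}$ part of $a$, each step using the three- and five-term relations in $\B_2$ and $\B_3$ to absorb one such $(t-v)$ at a time.
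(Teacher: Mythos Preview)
Your Stage~1 is fine, but it is only a consistency check: the lemma asks for vanishing modulo $\im(\delta_m)$, and that already implies closedness.

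The genuine gap is Stage~2. You want to reduce $a$ \emph{modulo $\im(\delta_{m+1})$} to a sum of terms $\{f\}_2\otimes c_3\wdw c_{m+1}$ with all $c_j\in\k^\t$, but you do not carry this out, and the restriction to boundaries is almost certainly too tight. Consider $a=\{c\}_2\otimes t\wedge c_4\wdw c_{m+1}$ with $c\in\k\setminus\{0,1\}$: the hypothesis is satisfied (only $t$ is non-constant in $\delta_{m+1}(a)$), yet every boundary $\delta_{m+1}(\{h\}_3\otimes w)=\{h\}_2\otimes h\wedge w$ ties the $\B_2$-entry to a wedge factor equal to $h$ itself, so there is no evident way to strip the lone $t$ while keeping the constant $\{c\}_2$. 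Your induction sketch invokes ``five-term relations'' without specifying any, and the hypothesis constrains $\delta_{m+1}(a)$, not $a$; nothing in it forces the wedge part of $a$ to be constant modulo boundaries.

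The paper takes a different and shorter route. It allows subtraction of \emph{any} element of $\Gamma(\k(t),m+1)_m$ whose total residue already vanishes---a much larger class than $\im(\delta_{m+1})$---and aims not at normalising $a$ but at killing $\delta_{m+1}(a)$. First it subtracts elements $\{(t-\alpha)/(t-\beta)\}_2\otimes c_3\wdw c_{m+1}$ with $\alpha,\beta,c_j\in\k$ (each has zero total residue by a direct check using items (2)--(3) of Proposition~\ref{prop:tame_symbol}) to reduce to $\delta_{m+1}(a)$ having at most one non-constant factor. Then it subtracts the explicit correctors $a''=(-1)^m\ts_\infty(a\wedge(1/t))$ and $a'_x=(-1)^{m+1}\ts_x(a)\wedge(t-x)$, again each with zero total residue, to force $\delta_{m+1}(\wt a)=0$. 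The remaining case---$a$ a cocycle---is not handled from scratch but by citing \cite[Corollary~1.4]{rudenko2021strong}. Your plan would have to reprove that last, genuinely non-trivial step (homotopy invariance of polylogarithmic cohomology in this degree), and that is precisely where your ``main obstacle'' sits.
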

\begin{proof}
\begin{enumerate}
    \item Let $b=\delta_{m+1}(a)$. The elements of the form $$\{(t-c_1)/(t-c_2)\}_2\otimes c_3\wedge\dots\wedge c_{m+1}$$ lie in the kernel of the map $Tot_{\k(t)}$. Subtracting from $a$ a linear combination of  elements of these form, we can assume that $b$ lies in the image of the multiplication map $\k(t)^\t\otimes \L^{m}\k^\t\to\L^{m+1}\k(t)^\t$.
    
    \item Let $a''=(-1)^m\ts_{\infty}(a\wedge (1/t))$. For $x\in\k$ denote $a'_x= (-1)^{m+1}\ts_{x}(a)\wedge (t-x)$. We have $a'',a'_x\in \Gamma(\k(t),m+1)_m$. 
    
    Since the total residue of the elements $a'',a'_x$ is equal to zero, it is enough to prove the statement for the element $$\wt a=a-a''-\sum\limits_{x\in \k}a'_x.$$
    \item Let $\wt b=\delta_{m+1}(\wt a)$.  We claim that for any $x$ we have $\ts_{x}(\wt b)=\ts_{\infty}(\wt b\wedge (1/t))=0$. Indeed, we have 
    \begin{align*}
        &\ts_x(\wt b)=\ts_x(\delta_{m+1}(\wt a))=-\delta_{m}(\ts_x(\wt a))=-\delta_{m}(\ts_x(a-a''-\sum_x a_x'))=\\&-\delta_{m}(\ts_x (a)-\ts_x(a_x'))=-\delta_{m}(\ts_x(a)-\ts_x(a))=0.
    \end{align*}

    The proof of the formula $\ts_{\infty}(\wt b\wedge (1/t))=0$ is similar.
    
    On the other hand, we can write the element $\wt b$ in the following form
    $$ \wt b= \wt b'+\sum\limits_{x\in\k}\wt b_x\wedge (t-x),$$
    with $\wt b'\in \L^{m+1}\k^\times, \wt b_x\in \L^m\k^\times$.
    As $\wt b_x=(-1)^m\ts_x(\wt b), \wt b'=(-1)^{m+1}\ts_{\infty}(\wt b\wedge (1/t))$, it follows that $\wt b=0$.

    \item So we can assume that $\delta_{m+1}(a)=0$. In this case the statement follows from \cite[Corollary 1.4]{rudenko2021strong}. We remark that Corollary 1.4 was stated in loc.cit. only for $\C$. However this corollary was deduced from Theorem 1.2. which holds for any field. This implies that Corollary 1.4. holds for any algebraically closed field.)
\end{enumerate}
\end{proof}

\begin{proposition}
\label{prop:strong_low_P^1}
\label{prop:SRL_P1}
On the field $\k(t)$ there is a unique lifted reciprocity map. 
\end{proposition}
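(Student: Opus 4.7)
The plan is to exploit the fact that, since $\k$ is algebraically closed, $\L^{m+1}\k(t)^\t$ is generated over $\Q$ by the elements
$$\omega_k(a_1,\ldots,a_k;c_{k+1},\ldots,c_{m+1}) := (t-a_1)\wdw(t-a_k)\wedge c_{k+1}\wdw c_{m+1}, \qquad k\geq 0,$$
with $a_i\in\k$ and $c_i\in\k^\t$. I will prove uniqueness first and then construct $h$ by induction on $k$.

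For uniqueness, let $\phi$ be the difference of two lifted reciprocity maps. From property (1) of Definition \ref{def:SRL} we obtain $\delta_m\circ\phi = 0$ on $\L^{m+1}\k(t)^\t$ and $\phi\circ\delta_{m+1}\equiv 0$ in the quotient $\B_2(\k)\otimes\L^{m-2}\k^\t/\im\delta_m$, while from property (2), $\phi$ vanishes on $f_1\wedge f_2\wedge c_3\wdw c_{m+1}$. Showing $\phi(\omega_k)=0$ by induction on $k$, the base case $k\leq 2$ is immediate; for $k\geq 3$, set $x=(t-a_1)/(t-a_2)$ and expand
$$x\wedge(1-x) = \frac{t-a_1}{t-a_2}\wedge\frac{a_1-a_2}{t-a_2} = (t-a_1)\wedge(a_1-a_2) - (t-a_2)\wedge(a_1-a_2) - (t-a_1)\wedge(t-a_2)$$
to obtain the identity
$$\omega_k = -\delta_{m+1}\bigl(\{x\}_2\otimes(t-a_3)\wdw(t-a_k)\wedge c_{k+1}\wdw c_{m+1}\bigr) + R_k,$$
where $R_k$ is a $\Q$-linear combination of generators $\omega_{k-1}$. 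Applying $\phi$ kills the $\delta_{m+1}$-image by the quotient condition and $R_k$ by induction.

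For existence, define $h$ on the same generators. For $k\leq 2$, set $h(\omega_k)=0$. For $k\geq 3$, the identity above motivates the recursive definition
$$h(\omega_k) := -Tot_F\bigl(\{x\}_2\otimes(t-a_3)\wdw(t-a_k)\wedge c_{k+1}\wdw c_{m+1}\bigr) + h(R_k),$$
interpreted in $\B_2(\k)\otimes\L^{m-2}\k^\t/\im\delta_m$. Axiom (i) of Definition \ref{def:SRL}, namely $\delta_m h = -Tot_F$, then follows inductively from the chain-map identity $\delta_m\circ Tot_F = -Tot_F\circ\delta_{m+1}$ combined with the reduction; axiom (ii) is checked similarly. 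Condition (2) is built in for $k\leq 2$, and extends to arbitrary $f_1, f_2\in\k(t)^\t$ because, after factoring each $f_i$ into linear terms and expanding by bilinearity, we reduce to generators with at most two linear factors. The specific normalisation from Theorem \ref{th:strong_suslin_reciprocity_law_field}(2) is then verified by a direct computation applying Proposition \ref{prop:tame_symbol} at the finite places $0, 1, a, \infty$.

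The principal technical obstacle is the well-definedness of $h$ in the existence argument: the recursive formula depends on the ordering of the $a_i$'s and on the pair $(a_1,a_2)$ singled out to form $x$. Two such choices produce preimages in $\B_2(\k(t))\otimes\L^{m-1}\k(t)^\t$ of the same element $\omega_k$, and their difference has $\delta_{m+1}$-image lying in $\L^2\k(t)^\t\otimes\L^{m-1}\k^\t$. By Lemma \ref{lemma:Res_is_zero} the associated $Tot_F$-values then agree modulo $\im\delta_m$, so $h(\omega_k)$ is unambiguously defined in the quotient.
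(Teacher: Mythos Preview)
Your uniqueness argument is correct and matches the paper's: both rely on the fact that $\L^{m+1}\k(t)^\t$ is generated by $A_1:=\im(\delta_{m+1})$ together with $A_2:=\im(\L^2\k(t)^\t\otimes\L^{m-1}\k^\t)$, on which any lifted reciprocity map is forced. Your recursive identity $\omega_k=-\delta_{m+1}(\{x\}_2\otimes\gamma)+R_k$ is precisely a constructive proof that $A_1+A_2=\L^{m+1}\k(t)^\t$.

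The existence argument, however, contains a mis-stated step. You claim that for two choices of the pair $(a_1,a_2)$ the difference $b_1-b_2$ has $\delta_{m+1}$-image in $\L^2\k(t)^\t\otimes\L^{m-1}\k^\t$. But $\delta_{m+1}(b_1-b_2)=R_k^{(1)}-R_k^{(2)}$ is a combination of $\omega_{k-1}$'s, which for $k\geq 4$ do \emph{not} lie in $A_2$, so Lemma~\ref{lemma:Res_is_zero} does not apply directly. To repair the induction you must strengthen the hypothesis to include the upper-left triangle identity $h\circ\delta_{m+1}=Tot_F$ on the span of the $\omega_{<k}$'s; then $h(R_k^{(1)}-R_k^{(2)})=h(\delta_{m+1}(b_1-b_2))=Tot_F(b_1-b_2)$ gives exactly the cancellation you need. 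This strengthened hypothesis can be propagated, but the verification is nontrivial and you have not written it.

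The paper sidesteps this entirely by defining $h$ in one stroke rather than recursively: given $a\in\L^{m+1}\k(t)^\t$, write $a=a_1+a_2$ with $a_1\in A_1$, $a_2\in A_2$, choose any $b$ with $\delta_{m+1}(b)=a_1$, and set $h(a)=Tot_F(b)$. Now two such choices differ by some $b-b'$ with $\delta_{m+1}(b-b')\in A_2$, and Lemma~\ref{lemma:Res_is_zero} applies immediately. Both homotopy triangles then follow in one line each. Your recursion is really just an algorithm for producing the decomposition $a=a_1+a_2$; once you have it, the non-recursive definition is both cleaner and avoids the bookkeeping gap.
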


We will denote this lifted reciprocity map by $\mc H_{\k(t)}$.

\begin{proof}Denote by $A_1\subset \Lambda^{m+1}\k(t)^\t$ the image of $\delta_{m+1}$ and by $A_2$ the image of the multiplication map $\Lambda^2 \k(t)^\t\otimes \Lambda^{m-1}\k^\t\to\Lambda^{m+1}\k(t)^\t$. Elementary calculation shows that $A_1$ and $A_2$ together generate $\Lambda^{m+1}\k(t)^\t$. As any lifted reciprocity map is uniquely determined on $A_1$ and on $A_2$, uniqueness follows.

To show existence, define a map $$\mc H_{\k(t)}\colon \Lambda^{m+1}\k(t)^\t\to (B_2(\k)\otimes \Lambda^{m-2}\k^\t)/\im(\delta_m)$$ as follows. Let $a=a_1+a_2$, where $a_1\in A_1, a_2\in A_2$. Choose some $b\in \delta_{m+1}^{-1}(a_1)$ and define $\mc H_{\k(t)}(a)=\sum\limits_{x\in\mathbb P^1}\ts_x(b)$. This map is well-defined by Lemma \ref{lemma:Res_is_zero}. 

We already know that $\mc H_{\k(t)}$ satisfies the second property of Definition \ref{def:SRL}  and that the upper-left triangle of diagram (\ref{diagram:SRL_def}) is commutative. Let us show the bottom-right triangle is commutative. On $A_2$ its commutativity follows from Weil reciprocity law. Its commutativity on $A_1$ follows from the fact that the total residue map $Tot_F$ is a morphism of complexes.
\end{proof}

\begin{corollary}
\label{cor:SRL_on_Totaro}
    $$\mc H_{\k(t)}(t\wedge (1-t)\wedge (1-a/t)\wedge c_4\wdw c_{m+1})=-\{a\}_2\otimes c_4\wdw c_{m+1}.$$
\end{corollary}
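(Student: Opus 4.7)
The plan is to apply directly the explicit construction of $h_{\k(t)}$ from the proof of Proposition~\ref{prop:SRL_P1}. Let $\beta := \{t\}_2 \otimes (1-a/t) \wedge c_4 \wdw c_{m+1}$ and $\omega := t\wedge (1-t)\wedge (1-a/t)\wedge c_4\wdw c_{m+1}$. The formula for $\delta_{m+1}$ recalled in the introduction immediately gives $\delta_{m+1}(\beta) = \omega$, so $\omega$ lies in the subspace $A_1 = \im\delta_{m+1}$. By construction of $h_{\k(t)}$, this forces
$$h_{\k(t)}(\omega) = \sum_{x\in\P^1}\ts_{\nu_x}(\beta),$$
reducing the corollary to a term-by-term evaluation of the tame symbols of $\beta$.

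The key step is then to combine Propositions~\ref{prop:tame_symbol_classiacal} and~\ref{prop:tame_symbol} with Lemma~\ref{lemma:leibniz_rule_tame_symbol} to show that only the point $x=a$ contributes. At $x=0$ and $x=\infty$, the function $t$ has nonzero valuation and $t\ne 0,1$ as an element of $\k(t)$, so property~(2) of Proposition~\ref{prop:tame_symbol} forces $\ts_{\nu_x}(\beta)=0$. At $x=1$, $t$ is a unit with residue $1\in\k$, so property~(3) of Proposition~\ref{prop:tame_symbol} produces the factor $\{1\}_2 = 0\in\B_2(\k)$. At any other finite point $x\in\k^\times\setminus\{1,a\}$, every entry inside $\beta$ is a unit at $\nu_x$, so by Proposition~\ref{prop:tame_symbol_classiacal} the inner tame symbol $\ts_{\nu_x}\bigl((1-a/t)\wedge c_4\wdw c_{m+1}\bigr)$ vanishes, and hence $\ts_{\nu_x}(\beta)=0$.

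At the surviving point $x=a$, the function $t$ is a unit with residue $a$, while $1-a/t = (t-a)/t$ is a uniformiser at $\nu_a$. Property~(3) of Proposition~\ref{prop:tame_symbol} combined with property~(2) of Proposition~\ref{prop:tame_symbol_classiacal} and $\ol{c_i}=c_i$ yield
$$\ts_{\nu_a}(\beta) = -\{a\}_2\otimes\ts_{\nu_a}\bigl((1-a/t)\wedge c_4\wdw c_{m+1}\bigr) = -\{a\}_2\otimes(c_4\wdw c_{m+1}),$$
and summing over $\P^1$ yields the claimed identity. I do not expect any real obstacle here: the argument is a direct application of the defining axioms of the tame symbol on $\Gamma(\cdot,m+1)$ and $\L^\bullet\k(t)^\times$, with the main care required being correct bookkeeping at the four potentially singular points $0,1,a,\infty$ and the observation that every other point contributes zero because all entries of $\beta$ are units there.
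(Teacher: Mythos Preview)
Your proof is correct and takes essentially the same approach as the paper: both set $b=\{t\}_2\otimes(1-a/t)\wedge c_4\wdw c_{m+1}$, observe $\delta_{m+1}(b)=\omega$, and conclude via $h_{\k(t)}(\omega)=Tot_{\k(t)}(b)$. The only difference is cosmetic: the paper invokes the homotopy property of $h_{\k(t)}$ abstractly and simply asserts $Tot_{\k(t)}(b)=-\{a\}_2\otimes c_4\wdw c_{m+1}$, whereas you unpack the explicit construction from Proposition~\ref{prop:SRL_P1} and carry out the point-by-point tame symbol computation in detail.
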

\begin{proof}
Let $$b=\{t\}_2\otimes (1-a/t)\wedge c_4\wdw c_{m+1}.$$
We have:
\begin{align*}
    &Tot_{k(t)}(b)=-\{a\}_2\otimes c_4\wdw c_{m+1},\\& \delta_{m+1}(b)=t\wedge (1-t)\wedge (1-a/t)\wedge c_4\wdw c_{m+1}.
\end{align*}
As the map $\mc H_{\k(t)}$ is a lifted reciprocity map, we have
$$\mc H_{\k(t)}\delta_{m+1}(b)=Tot_{\k(t)}(b).$$ 
The statement follows.
\end{proof}

\subsection{The construction of the lift}
\label{sec:prel_results:lift}
In the definition of norm map on Milnor $K$-theory the point $\infty\in\P^1$ plays a special role. Let us call a discrete valuation on the field $\k(X)$ \emph{ special} if it corresponds to the point $\infty$ and \emph{general} otherwise. The following definition gives a two-dimensional analog of this definition.
\begin{definition}
\label{def:types_of_valuations}
Let $F\in \F_1$. A valuation $\nu\in \dval(F(t))$ is called \emph{general} if it corresponds to some irreducible polynomial over $F$. The set of general valuations are in bijection with  the set of all closed points on the affine line over $F$, which we denote by $\mb A_{F,(0)}^1$.  A valuation is called \emph{special} if it is not general. Denote the set of general (resp. special) valuations by $\dval(F(t))_{gen}$ (resp. $\dval(F(t))_{sp}$). 
\end{definition}

\begin{remark}
\label{rem:types_of_valuations}

Let $F\in\F_1$. Let us realize $F$ as a field of fractions on some smooth projective curve $X$ over $k$. Set $S=X\t \mathbb P^1$. It can be checked that a valuation $\nu\in \dval(F(t))$ is special in the following two cases:

\begin{enumerate}
    \item There is a birational morphism $\varphi\colon \wt S\to S$, and the valuation $\nu$ corresponds to some prime divisor $D\subset \wt S$ contracted under $\varphi$.
    \item The valuation $\nu$ corresponds to some of the divisors $X\t\{\infty\}, \{a\}\t\mathbb P^1, a\in X$.
\end{enumerate}

Otherwise, the valuation $\nu$ is general.
It follows from this description that if $\nu$ is a special valuation different from $X\times \{\infty\}$, then the residue field $\overline{F(t)}_\nu$ is isomorphic to $\k(t)$.
\end{remark}

\begin{definition}
\label{def:lift}
Let $F\in \F_1$, $j, m\in \mathbb N$, and $\nu\in \dval(F(t))_{gen}$. \emph{A lift} of an element $a\in \Gamma(\ol {F(t)}_\nu, m)_j$ is an element $b\in \Gamma(F(t), m+1)_{j+1}$ satisfying the following two properties:
\begin{enumerate}
    \item $\ts_\nu(b)=a$ and
    \item for any general valuation $\nu'\in\dval(F(t))_{gen}$ different from $\nu$, we have $\ts_{\nu'}(b)=0$.
\end{enumerate}

The set of all lifts of the element $a$ is denoted by $\mathcal L(a)$.
\end{definition}

\begin{theorem}
\label{th:main_exact_sequence}
Let $F\in \F_1$ and $j\in\{1,\dots, m\}$. For any $\nu\in \dval(F(t))_{gen}$ and $a\in \Gamma(\ol{F(t)}_\nu, m+1)_j$, the following statements hold:
\begin{enumerate}
    \item  The set $\mathcal L(a)$ is non-empty.
    \item Let us assume that $j=m+1$. For any $b_1,b_2\in \mathcal L(a)$, the element $b_1-b_2$ can be represented in the form $a_1+\delta_{m+2}(a_2)$, where $a_1\in \Lambda^{m+2} F^\t$ and $a_2\in \Gamma(F(t),m+2)_{m+1}$ such that for any $\nu\in\dval(F(t))_{gen}$ the element $\ts_\nu(a_2)$ lies in the image of the map $\delta_{m+1}$.
\end{enumerate}
\end{theorem}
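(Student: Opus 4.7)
The plan is to analyse both parts by a Bass--Tate style induction on the degree of the irreducible polynomial defining $\nu$. Since $\dval(F(t))_{gen}$ is in bijection with the closed points of $\A^1_F$, every general $\nu$ has the form $\nu_p$ for a monic irreducible $p\in F[t]$, with residue field $\ol{F(t)}_{\nu_p}=F[t]/(p)$ and uniformizer $p$; for any irreducible $q\ne p$, the polynomial $p$ is automatically a unit at $\nu_q$. I will use this fact repeatedly.

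For Part 1, I would induct on $d=\deg p$. In the base case $d=1$ one has $\nu=\nu_{t-\alpha}$ with residue field $F$; for $j=m+1$, write $a=\sum \overline{u^i_1}\wdw\overline{u^i_{m+1}}$ with $u^i_j\in F^\t$ viewed as constants in $F(t)^\t$, and set $b=\sum(t-\alpha)\wedge u^i_1\wdw u^i_{m+1}$. By Lemma \ref{lemma:leibniz_rule_tame_symbol} this has $\ts_\nu(b)=a$ and $\ts_{\nu_q}(b)=0$ for any other irreducible $q$, because each $u^i_j$ is a constant and $t-\alpha$ is a unit at $\nu_q$. The $j=m$ case reduces to the $j=m+1$ case by decomposing $a=\sum\{\overline{x_i}\}_2\otimes d_i$, lifting $\overline{x_i}$ to constants $\tilde x_i\in F^\t$, lifting each $d_i$ to some $\tilde d_i$ supported only at $\nu$ via the $j=m+1$ construction, and setting $b=-\sum\{\tilde x_i\}_2\otimes \tilde d_i$. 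For the inductive step $d>1$, lift the $\overline{u^i_j}$ to $u^i_j\in F[t]$ of degree strictly less than $d$ (hence coprime to $p$) and set $b_0=\sum p\wedge u^i_1\wdw u^i_{m+1}$; then $\ts_\nu(b_0)=a$, and $\ts_{\nu_q}(b_0)=0$ whenever $\deg q\ge d$ (because $\nu_q(p)=0$ and $q\nmid u^i_j$ for degree reasons), leaving only finitely many residues at valuations of degree $<d$, each of which is killed using the inductive hypothesis.

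For Part 2 ($j=m+1$), set $c=b_1-b_2\in \L^{m+2}F(t)^\t$, so that $\ts_{\nu'}(c)=0$ for every general $\nu'$. The Bass--Tate exactness of the Gersten sequence for Milnor $K$-theory of $F(t)/F$ identifies the class of $c$ in $K^M_{m+2}(F(t))$ with a class coming from $K^M_{m+2}(F)$; choosing a constant representative $a_1\in\L^{m+2}F^\t$ and recognising every Steinberg element $x\wedge(1-x)\wedge e$ as $\delta_{m+2}(\{x\}_2\otimes e)$, one obtains $c=a_1+\delta_{m+2}(a_2')$ with $a_2'\in\B_2(F(t))\otimes\L^m F(t)^\t$. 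Since $\ts_\nu(c)=\ts_\nu(a_1)=0$ and $\ts_\nu$ commutes with the differential, one deduces $\delta_{m+1}(\ts_\nu(a_2'))=0$, so each residue $\ts_\nu(a_2')$ is a $\delta_{m+1}$-cycle in $\B_2(\ol{F(t)}_\nu)\otimes\L^{m-1}\ol{F(t)}_\nu^\t$.

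The main obstacle is the upgrade from cycle to boundary: I must modify $a_2'$ by elements of $\ker(\delta_{m+2})\subset\B_2(F(t))\otimes\L^m F(t)^\t$, which leaves $c-a_1=\delta_{m+2}(a_2)$ unchanged, so that the resulting $\ts_\nu(a_2)$ lies in $\im(\delta_{m+1})$ for every general $\nu$. My plan is a second induction on the degree of the general valuations at which $\ts_\nu(a_2')$ has a nontrivial cohomology class, again exploiting the Bass--Tate polynomial filtration together with the $5$-term relations defining $\B_2$, and using Proposition \ref{prop:SRL_P1} as the degree-one anchor. This cycle-to-boundary step is where the detailed combinatorics of $\B_2(F(t))$ must be exploited, and I expect it to be the principal technical challenge.
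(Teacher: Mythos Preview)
The paper does not give a self-contained proof here: immediately after the statement it refers to Lemma~2.8, Proposition~2.9, and Theorem~2.6 of \cite{bolbachan_2023_chow}, asserting that the argument there (written for $m=2$) is formal and generalises verbatim to arbitrary $m$. So there is no in-paper proof to compare against, only a pointer. Your Bass--Tate induction on $\deg p$ is the standard mechanism for such statements and is almost certainly what the referenced argument does; in particular your treatment of Part~1 is correct as outlined.

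For Part~2 there is a genuine gap. Your reduction is sound up to the point where you obtain $c=a_1+\delta_{m+2}(a_2')$ with each $\ts_\nu(a_2')$ a $\delta_{m+1}$-\emph{cycle}. The difficulty you flag --- upgrading these residues from cycles to boundaries by modifying $a_2'$ inside $\ker(\delta_{m+2})$ --- is exactly the point, but your proposed attack does not work as stated. Invoking Proposition~\ref{prop:SRL_P1} as a ``degree-one anchor'' is misplaced: that proposition produces a lifted reciprocity map on $\k(t)$ over an algebraically closed $\k$, whereas here the degree-one residue fields are $\ol{F(t)}_\nu\cong F$ with $F\in\F_1$ \emph{not} algebraically closed, and in any case the proposition says nothing about $H^{m}(\Gamma(\ol{F(t)}_\nu,m+1))$, which is what you would need to kill. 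Controlling that cohomology group for arbitrary residue fields is precisely the unknown Goncharov-type statement one is trying to avoid. The argument in \cite{bolbachan_2023_chow} sidesteps this by running the Bass--Tate filtration simultaneously on $\L^\bullet$ and on $\B_2\otimes\L^\bullet$ (this is the content of the cited Lemma~2.8 and Proposition~2.9), so that $a_2$ is produced directly with residues already in $\im(\delta_{m+1})$, rather than producing an arbitrary $a_2'$ and attempting a post-hoc correction.
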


In the case $m=2$, this theorem was proved in \cite[Theorem 2.6]{bolbachan_2023_chow}. This proof relied on Lemma 2.8 and Proposition 2.9 from \cite{bolbachan_2023_chow}. Both of these statements were formulated for arbitrary $m$. The proof of Theorem 2.6. from \cite{bolbachan_2023_chow} does not use any specific properties of the case $m=2$ and can be easily generalised to arbitrary $m$.

\subsection{Parshin reciprocity law}
\label{sec:prel_results:Parshin}

We need the following statement:

\begin{theorem}
\label{th:Parshin_sum_point_curve}
Let $L\in\F_2$ and $j\in\{m+1, m+2\}$. For any $b\in \Gamma(L,m+2)_j$ and all but finitely many $\mu\in \dval(L)$ the following sum is zero:
$$\sum\limits_{\mu'\in \val(\ol L_\mu)}\ts_{\mu'}\ts_\mu(b)=0.$$
Moreover, the following sum is zero:
$$\sum\limits_{\mu\in \dval(L)}\sum\limits_{\mu'\in \val(\oL_\mu)}\ts_{\mu'}\ts_\mu(b)=0.$$
\end{theorem}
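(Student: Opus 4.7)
The plan is to reduce the double sum to a finite one by working on a suitable smooth projective surface model of $L$, and then analyze the remaining contributions point by point. By Hironaka I choose a smooth projective surface $S/\k$ with $\k(S)=L$ together with an snc divisor $D\subset S$ containing the divisors of all rational functions entering a chosen representative of $b$: for $j=m+2$ the wedge factors, and for $j=m+1$ additionally the functions $a$, $1-a$ and the factors of $\omega$ in every summand $\{a\}_2\otimes\omega$.

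For the finiteness claim I show that the inner sum $\sum_{\mu'}\ts_{\mu'}\ts_\mu(b)$ vanishes for every $\mu\in\dval(L)$ other than the valuations of the finitely many components of $D$. If $\mu=\nu_C$ for $C\subset S$ not a component of $D$, already $\ts_\mu(b)=0$. If $\mu$ corresponds to an exceptional divisor $E\cong\P^1$ of an iterated blowup $\wt S\to S$ over a point $q\in S$, a local computation with normal-crossings parameters at $q$ shows that units at $q$ restrict to constants on $E$ while the two local parameters contribute a single non-constant factor in $\k(E)^\t$; hence $\ts_E(b)$ is a sum of elements of the form $f\wedge c_1\wdw c_m$ with $f\in\k(E)^\t$, $c_i\in\k^\t$ for $j=m+2$, respectively $\{c\}_2\otimes(f\wedge c_1\wdw c_{m-2})$ with $c\in\k$ for $j=m+1$. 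The total residue on $E$ of such an element vanishes: in the first case by the identity $\sum_p\ord_p f=0$ on $\P^1$, in the second by Lemma \ref{lemma:Res_is_zero}. An induction on the depth of the blowup tower handles every such $\mu$.

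By finiteness the double sum reduces to $\sum_{C\subset D,\ p\in C}\ts_p\ts_C(b)=\sum_p\sum_{C\ni p}\ts_p\ts_C(b)$, in which only singular points $p$ of $D$ contribute (at a smooth point of $D$ only one component passes through $p$ and the subsequent $\ts_p$ of a wedge of units vanishes). At a singular point $p$ let $\xi_1,\xi_2$ be local parameters cutting out the two components $C_1,C_2$ through $p$. For $j=m+2$, Lemma \ref{lemma:char_of_strictly_regular} decomposes $b$ near $p$; only the leading terms $\xi_1\wedge\xi_2\wedge u_3\wdw u_{m+2}$ produce nonzero iterated residues at $p$, and their $C_1$- and $C_2$-contributions are $\ol{u_3}(p)\wdw\ol{u_{m+2}}(p)$ with opposite signs and thus cancel, exactly as in the proof of Theorem \ref{th:Parshin_reciprocity_law}. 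For $j=m+1$ and a summand $\{a\}_2\otimes\omega$: if $p\in (a)$, Proposition \ref{prop:tame_symbol}(2) kills the term along $C=(a)$, while along any other $C\ni p$ the intermediate $-\{\ol a\}_2\otimes\ts_C(\omega)$ has $\ol a|_C$ with a zero or pole at $p$, again killed at $\ts_p$ by (2); if $p\in(1-a)\setminus(a)$ then $\ol a(p)=1$ and $\{\ol a(p)\}_2=0$; otherwise $a(p)\in\k\setminus\{0,1\}$ and Proposition \ref{prop:tame_symbol}(3) applied twice gives $\ts_p\ts_C(\{a\}_2\otimes\omega)=\{a(p)\}_2\otimes\ts_p\ts_C(\omega)$, so the sum over $C\ni p$ equals $\{a(p)\}_2\otimes\sum_{C\ni p}\ts_p\ts_C(\omega)=0$ by the same local argument applied to $\omega\in\L^m L^\t$.

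The main obstacle is the finiteness step: verifying that $\ts_E(b)$ really has the claimed ``single non-constant factor'' shape on every exceptional divisor of every iterated blowup, and tracking how the strictly-regular form of $b$ propagates through successive blowups at points so that Lemma \ref{lemma:Res_is_zero} together with the classical Weil identity on $\P^1$ forces the total residue on each $E$ to vanish.
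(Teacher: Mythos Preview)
Your approach is the right one and is essentially what the paper (via its citation to \cite{bolbachan_2023_chow}, Theorem 2.10) has in mind: pass to a smooth projective model with an snc divisor $D$ carrying all relevant divisors, dispose of all valuations except those of the components of $D$, then do the local cancellation at nodes of $D$ exactly as in Theorem~\ref{th:Parshin_reciprocity_law}. Your treatment of $j=m+2$ and the node-by-node analysis for $j=m+1$ are correct.

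There is one genuine gap in the exceptional-divisor step for $j=m+1$. You assert that on any exceptional $E$ over a point $q$ the residue has the shape $\{c\}_2\otimes(f\wedge c_1\wedge\cdots)$ with $c\in\k$. This uses that $a$ is a unit at $q$ so that $\ol a|_E=a(q)$. But your snc hypothesis on $D$ does not prevent $q$ from being an indeterminacy point of $a$: if locally $a=\xi_1^{n_1}\xi_2^{n_2}u$ with $n_1,n_2$ of opposite signs, there exist exceptional divisors $E$ with $\ord_E(a)=0$ on which $\ol a|_E$ is a \emph{non-constant} rational function. For such $E$ your form fails, and Lemma~\ref{lemma:Res_is_zero} does not apply either, since $\delta_{m+1}\bigl(\{\ol a|_E\}_2\otimes(f\wedge c_1\wedge\cdots)\bigr)$ then has three non-constant factors. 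The fix is simple: enlarge your choice of $S$ by first resolving the indeterminacies of each $a$ appearing in $b$, so that every $a$ becomes a morphism $S\to\P^1$. Then for any $q\in S$ either $a(q)\in\k^\times$ (so $\ol a|_E=a(q)$ is constant for every $E$ over $q$) or $a(q)\in\{0,\infty\}$ (so $\ord_E(a)\neq0$ and Proposition~\ref{prop:tame_symbol}(2) kills the term). With this adjustment your argument goes through.

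One minor point: once $c\in\k$ is constant, the vanishing of $\sum_p\ts_p\bigl(\{c\}_2\otimes(f\wedge c_1\wedge\cdots)\bigr)$ is immediate from Proposition~\ref{prop:tame_symbol}(3) and $\sum_p\ord_p f=0$; invoking Lemma~\ref{lemma:Res_is_zero} is unnecessary (and its conclusion is only modulo $\im(\delta_m)$, which is weaker than what you need here).
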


The proof of this theorem is completely similar to the proof of Theorem 2.10 from \cite{bolbachan_2023_chow}. We remark that the proof of Lemma 2.14 from \cite{bolbachan_2023_chow} is not correct. However this lemma is a particular case of Lemma \ref{lemma:characterisation_of_strictly_regular_elements} from this paper.  See also Theorem \ref{th:Parshin_reciprocity_law} from this paper. 


We recall that the following lemma was stated in Section \ref{sub:sec:vanishing_statements}

\begin{lemma}[Lemma \ref{lemma:finitnes_of_sum}]
Let $S$ be a smooth proper surface. Let $b\in \Lambda^{m+2} L^\t$ be strictly regular. For any alteration $\ph\colon \wt S\to S$ and any divisor $E\subset \wt S$ contracted under $\ph$, the element $\ts_E(\ph^*(b))$ lies in the image of the multiplication map $\overline L_\nu^\t\otimes \Lambda^m \k^\t\to \Lambda^{m+1}\overline L_\nu^\t$.
\end{lemma}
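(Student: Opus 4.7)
The plan is to exploit strict regularity at the single point of $S$ to which the contracted divisor $E$ maps, and then reduce to a very short case analysis using that $\dim S = 2$.

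First, since $\wt S$ is two-dimensional and $E \subset \wt S$ is an irreducible divisor contracted under the alteration $\ph$, the image $y := \ph(E)$ is a closed point of $S$. Now I apply Lemma \ref{lemma:char_of_strictly_regular} at $y$: on some affine open neighborhood of $y$ the element $b$ is a linear combination of elements of the form
$$\xi_1 \wdw \xi_k \wedge u_{k+1} \wdw u_{m+2},$$
where the $\xi_i$ extend to a regular system of parameters at $y$ and the $u_j$ take nonzero values at $y$. Because $\dim_y S = 2$, each such summand has $k \in \{0,1,2\}$.

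Second, I analyze $\ts_E$ term by term. Since $\ph(E) = \{y\}$ and each $u_j$ is a unit at $y$, the pullback $\ph^*(u_j)$ is a unit along $E$ whose residue on $E$ is the constant $\ol{u_j(y)} \in \k^\t$. Applying Lemma \ref{lemma:leibniz_rule_tame_symbol} gives
$$\ts_E\bigl(\ph^*(\xi_1 \wdw \xi_k \wedge u_{k+1} \wdw u_{m+2})\bigr) = \ts_E\bigl(\ph^*(\xi_1 \wdw \xi_k)\bigr) \wedge \ol{u_{k+1}(y)} \wdw \ol{u_{m+2}(y)},$$
whose last factor lies in $\Lambda^{m+2-k}\k^\t$.

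Finally, the case split: for $k=0$, all arguments pull back to units on $E$ and the tame symbol vanishes; for $k=1$, the expression equals $\ord_E(\ph^*(\xi_1)) \cdot \ol{u_2(y)} \wdw \ol{u_{m+2}(y)}$, which lies in $\Lambda^{m+1}\k^\t$ and hence in the image of the multiplication map (write it as $\ol{u_2(y)}$ wedge the remaining factor in $\Lambda^m \k^\t$); for $k=2$, the factor $\ts_E(\ph^*(\xi_1\wedge \xi_2))$ is a single element of $\ol L_E^\t$, wedged with an element of $\Lambda^m \k^\t$, so again in the image. The only subtlety is verifying that $\ph(E)$ is a point, which uses $\dim S = 2$ crucially; everything else is a mechanical application of the Leibniz rule for tame symbols.
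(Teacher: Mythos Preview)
Your proof is correct and follows essentially the same approach that the paper indicates. The paper does not spell out a proof here, merely stating that it is similar to Lemma~2.16 of \cite{bolbachan_2023_chow}; but the surrounding context (in particular the proof of Proposition~\ref{prop:differential_strictly_regular_elements} and the remark pointing to Lemma~\ref{lemma:char_of_strictly_regular}) makes clear that the intended argument is exactly your reduction via Lemma~\ref{lemma:char_of_strictly_regular} to summands $\xi_1\wdw\xi_k\wedge u_{k+1}\wdw u_{m+2}$ with $k\le 2$, followed by the Leibniz rule for tame symbols.
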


\begin{corollary}
\label{cor:finitnes_of_sum}
    Let $L\in\F_2$. For any $b\in \Lambda^{m+2} L^\t$ and all but a finite number of $\nu\in \dval(L)$ the element $\ts_\nu(b)$ belongs to the image of the multiplication map $\overline L_\nu^\t\otimes \Lambda^m \k^\t\to \Lambda^{m+1}\overline L_\nu^\t$.
\end{corollary}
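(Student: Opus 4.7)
The plan is to choose a good surface model of $L$ and then partition the valuations in $\dval(L)$ according to their centre on it. To begin, realise $L = \k(S)$ for a smooth proper surface $S$ over $\k$, and fix a finite presentation $b = \sum_\alpha c_\alpha\, \xi_1^{(\alpha)} \wdw \xi_{m+2}^{(\alpha)}$ with $c_\alpha \in \Q$ and $\xi_i^{(\alpha)} \in L^\t$. By Hironaka resolution applied to the (finite) divisor $\sum_{i,\alpha} \supp((\xi_i^{(\alpha)}))$ on $S$, after replacing $S$ by a suitable birational model we may assume that $b$ is strictly regular on $S$ in the sense of Definition \ref{def:strictly_regular}. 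Let $T$ denote the finite set of irreducible divisors on $S$ along which some $\xi_i^{(\alpha)}$ has a zero or a pole.

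The next step is to classify valuations by their centre. Since $S$ is proper, each $\nu \in \dval(L)$ has a unique centre on $S$, which is either an irreducible divisor or a closed point. For $\nu = \nu_D$ with $D \subset S$ a divisor: if $D \notin T$ then every $\xi_i^{(\alpha)}$ is a $\nu$-unit, hence $\ts_\nu(b) = 0$ and the conclusion is trivial; only the finitely many $D \in T$ may fail this.

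The core step is handling valuations with closed-point centre $p \in S$. Here I would invoke Zariski's theorem for surfaces: every such divisorial valuation $\nu$ is realised as $\nu_E$ for some irreducible divisor $E$ on a smooth projective surface $\wt S$ obtained from $S$ by a finite sequence of blow-ups of closed points, with the resulting birational morphism $\ph \colon \wt S \to S$ contracting $E$ to $p$. Since $\ph$ is birational, $\ph^*(b) = b$ as elements of $\L^{m+2}(L^\t) = \L^{m+2}(\k(\wt S)^\t)$, so $\ts_\nu(b) = \ts_E(\ph^*(b))$; Lemma \ref{lemma:finitnes_of_sum}, applied to the alteration $\ph$, gives precisely that this element lies in the image of $\ol L_\nu^\t \otimes \L^m \k^\t \to \L^{m+1} \ol L_\nu^\t$. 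The hypothesis of that lemma, that $b$ be strictly regular on the base surface, is exactly the property arranged in the first paragraph. Combining the two cases, the set of bad valuations is contained in the finite set $\{\nu_D : D \in T\}$. The only non-routine input is the initial strict-regularity reduction, which is standard by Hironaka; no step beyond Lemma \ref{lemma:finitnes_of_sum} should pose real difficulty.
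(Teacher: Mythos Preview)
Your proposal is correct and follows precisely the route implied by the paper: reduce to a strictly regular element on a smooth proper surface via Hironaka, then split $\dval(L)$ into the finitely many divisorial valuations on $S$ itself (where $\ts_\nu(b)$ can be nonzero only along the finite set $T$) and the valuations with closed-point centre, each of which is an exceptional divisor on a blow-up and is handled directly by Lemma~\ref{lemma:finitnes_of_sum}. The paper states the corollary without proof immediately after that lemma, and your argument is exactly the intended derivation.
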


\subsection{Definition of the map $\mc N_\nu$}

Let $F\in \F_1$, $\nu\in\dval(F(t))$. Denote the field $F(t)$ by $L$. Our goal is to construct a map $\N_\nu\colon \SRL(F)\to \SRL(\oL_\nu)$. We will do this in the following three steps:

\begin{enumerate}
    \item We will define this map when $\nu$ is a special valuation.
    \item Using the construction of the lift from Section \ref{sec:prel_results:lift}, for any general valuation $\nu\in\dval(L)$, we will define a map $$\N_\nu\colon \SRL(F)\to \Hom(\Gamma(\ol L_\nu, m+1)_{m+1}\to \Gamma(\k,m)_{m-1}/\im(\delta_m)).$$
    \item Using Theorem \ref{th:Parshin_sum_point_curve} from Section \ref{sec:prel_results:Parshin}, we will show that for any $h\in\SRL(F)$, the map $\N_\nu(h)$ is a lifted reciprocity map on the field $\oL_\nu$. So, $\N_\nu$ gives a map $\SRL(F)\to \SRL(\ol{L}_\nu)$.
\end{enumerate}

Denote by $\nu_{\infty, F}\in\dval(F(t))$ a discrete valuation corresponding to the point $\infty\in\P^1_F$. Let $\nu$ be special. If $\nu=\nu_{\infty,F}$ then define $\N_\nu(h)=h$ (here we have used the identification of $\oL_{\nu_{\infty,F}}$ with $F$). In the other case, we have $\overline{L}_\nu\simeq \k(t)$ (see Remark \ref{rem:types_of_valuations}). In this case, define $\N_\nu(h)$ to be the unique lifted reciprocity map from Proposition \ref{prop:SRL_P1}. We have defined $\N_\nu$ for any $\nu\in \dval(L)_{sp}$. 

We recall that the set $\mc L(a)$ was introduced in Definition \ref{def:lift} and consists of different lifts of the element $a$. Let $h\in\SRL(F)$. Define a map $H_h\colon \Lambda^{m+2} L^\t\to \Gamma(\k,m)_{m-1}/\im(\delta_m)$ by the following formula:
$$H_h(b)=-\sum\limits_{\mu\in \dval(L)_{sp}}\N_\mu(h)(\ts_{\mu}(b)).$$ 
This sum is well-defined by Corollary \ref{cor:finitnes_of_sum}. 
\begin{definition}
\label{def:theta_gen}
Let $\nu\in\dval(L)_{gen}$. Define a map $$\N_\nu\colon \SRL(F)\to \Hom(\Lambda^{m+1}\ol L_\nu^\t,  \Gamma(\k,m)_{m-1}/\im(\delta_m))$$ as follows. Let $h\in\SRL(F)$ and $a\in \Lambda^{m+1}\ol L_\nu^\t$. Choose some lift $b\in\mc L(a)$ and define the element $\N_\nu(h)(a)$ by the formula $H_h(b)$. 
\end{definition}

Similarly to Section 3.1 of \cite{bolbachan_2023_chow}, it can be shown that this definition is well-defined and gives a map $\N_\nu\colon \SRL(F)\to \SRL(\ol L_\nu).$ We omit details.

\subsection{Norm map}

\begin{definition}
\label{def:norm_map}
Let $j\colon F_1\emb F_2$ be an extension of some fields from $\F_1$. Let $a$ be some generator of $F_2$ over $F_1$. Denote by $p_a\in F[t]$ the minimal polynomial of $a$ over $F_1$. Denote by $\nu_a$ the corresponding valuation. The residue field $\overline{L}_{\nu_{a}}$ is canonically isomorphic to $F_2$. So we get a map $\N_{{\nu_a}}\colon\SRL(F_1)\to \SRL(F_2)$, which we denote by $N_{F_2/F_1, a}$. This map is called \emph{the norm map}. 
\end{definition}

Then one can prove the following theorem:

\begin{theorem}
    The map $N_{F_2/F_1, a}$ does not depend on $a$. Denote it simply by $N_{F_2/F_1}$. We have: \begin{enumerate}
        \item If $F_1\subset F_2\subset F_3$ are extensions from $\F_1$, then $$N_{F_3/F_2}\circ N_{F_2/F_1}=N_{F_3/F_1}.$$
        \item Let $j\colon F_1\emb F_2$. We have $\SRL(j)\circ N_{F_2/F_1}=id$.
        \item Let $F\in\F_1$. Choose some embedding $j\colon \k(t)\emb F$. The element $$N_{F/\k(t)}(h_{\k(t)})$$ does not depend on $j$.
    \end{enumerate}
\end{theorem}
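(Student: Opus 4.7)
My plan is to follow the strategy used for the $m=2$ case in \cite{bolbachan_2023_chow}, with Parshin's two-dimensional reciprocity law (Theorem~\ref{th:Parshin_sum_point_curve}) as the central technical tool. I would prove the statements in the order: item~2, item~1, independence of~$a$, and then item~3.

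For item~2, let $h\in\SRL(F_1)$, $c\in\Lambda^{m+1}F_1^\times$, $j\cl F_1\emb F_2$, and let $a$ generate $F_2$ over $F_1$ with minimal polynomial $p_a\in F_1[t]$; set $L=F_1(t)$. The element $p_a\wedge c\in\Lambda^{m+2}L^\times$ (with $c$ viewed in $L$ via $F_1\subset L$) is a lift of $j(c)$ for $\nu_a$, since $p_a$ is a uniformizer at $\nu_a$ and a unit at every other general valuation. Evaluating $H_h(p_a\wedge c)=-\sum_{\mu\in\dval(L)_{sp}}\N_\mu(h)(\ts_\mu(p_a\wedge c))$, the contribution at $\nu_{\infty,F_1}$ is exactly $\deg(j)\,h(c)$. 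At every other special valuation $\mu$ the residue field is $\k(t)$, and since the factors of $c$ lie in $F_1^\times$ and reduce to elements of $\k^\times$, the tame symbol $\ts_\mu(p_a\wedge c)$ is a wedge of one nonconstant element of $\k(t)^\times$ with $m$ elements of $\k^\times$; by property~(2) of a lifted reciprocity map, $\N_\mu(h)=h_{\k(t)}$ annihilates any such wedge. Dividing by $\deg j$ yields item~2.

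For item~1, the primitive element theorem reduces the problem to simple towers $F_1\subset F_2=F_1(a)\subset F_3=F_2(b)$ with a primitive element $c=a+\lambda b$ of $F_3$ over $F_1$ for generic $\lambda\in\k$. Both $N_{F_3/F_2,b}\circ N_{F_2/F_1,a}$ and $N_{F_3/F_1,c}$ are realized as iterated tame symbols on the two-dimensional function field $F_1(t_1,t_2)$: the composition along the chain $\{t_1-a=0\}\supset\{p_b(t_2)=0\}$, and the single-step map along the divisor cut by the minimal polynomial of $c$ in $t_1+\lambda t_2$. Given $\xi\in\Lambda^{m+1}F_3^\times$, I would construct a common bivariate lift $B\in\Lambda^{m+3}F_1(t_1,t_2)^\times$ whose appropriate iterated residue reproduces $\xi$, and apply Theorem~\ref{th:Parshin_sum_point_curve} to $B$; property~(2) of a lifted reciprocity map together with item~2 control the auxiliary chains of the Parshin sum, forcing them to vanish and giving the desired equality of norm maps. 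Identifying and handling these auxiliary chains---in particular the exceptional divisors of the birational models needed to make $B$ strictly regular---is the principal obstacle. Independence of $a$ follows analogously, by applying 2D Parshin on $F_1(t_1,t_2)$ to a bivariate lift whose only two surviving iterated residues lie along $\{p_{a_1}(t_1)=0\}\supset\{p_{a_2}(t_2)=0\}$ and its symmetric version, forcing $\N_{\nu_{a_1}}=\N_{\nu_{a_2}}$.

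Item~3 follows from transitivity and independence of $a$ combined with the uniqueness of $h_{\k(t)}$ from Proposition~\ref{prop:SRL_P1}. Given two embeddings $j_1,j_2\cl\k(t)\emb F$, choose a finite extension $F'\supset F$ containing an intermediate field $K\cong\k(t_1,t_2)$ into which both $j_i(\k(t))$ embed as subfields. Transitivity factors both $N_{F'/\k(t),j_i}$ through norms from $K$, and by Proposition~\ref{prop:SRL_P1} applied in each variable the two intermediate images $N_{K/\k(t),(\text{incl})_i}(h_{\k(t)})$ coincide with the unique lifted reciprocity map on $K$ produced by the two-variable version of the construction. Pulling back along $F\subset F'$ via item~2 then yields $N_{F/\k(t),j_1}(h_{\k(t)})=N_{F/\k(t),j_2}(h_{\k(t)})$.
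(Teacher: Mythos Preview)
Your outline for item~2 is correct and matches the standard argument: the lift $p_a\wedge c$ has the right residue at $\nu_a$ and none at other general valuations, and at every special valuation $\mu\neq\nu_{\infty,F_1}$ the residue is a wedge of one element of $\k(t)^\times$ against constants, killed by property~(2) of a lifted reciprocity map. This is exactly how the $m=2$ argument runs.

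Your argument for item~3, however, is broken. You propose passing to a finite extension $F'\supset F$ that contains a subfield $K\cong\k(t_1,t_2)$. But $F\in\F_1$ has transcendence degree~$1$ over $\k$, so any finite extension $F'$ also has transcendence degree~$1$ and cannot contain a field of transcendence degree~$2$. The correct route goes in the opposite direction: take the \emph{common subfield} $F_0=j_1(\k(t))\cap j_2(\k(t))$. A linear-disjointness argument shows $F_0$ has transcendence degree~$1$ (otherwise $j_1(\k(t))\otimes_\k j_2(\k(t))$ would inject into $F$, which is impossible), so $F_0\cong\k(s)$ by L\"uroth. Then transitivity gives $N_{F/F_0}=N_{F/j_i(\k(t))}\circ N_{j_i(\k(t))/F_0}$, and Proposition~\ref{prop:SRL_P1} forces $N_{j_i(\k(t))/F_0}(h_{F_0})$ to be the unique lifted reciprocity map on $j_i(\k(t))$; hence both $N_{F/j_i(\k(t))}(h_{\k(t)})$ equal $N_{F/F_0}(h_{F_0})$.

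Your sketch for item~1 and independence of~$a$ also has a transcendence-degree mismatch. You invoke ``2D Parshin on $F_1(t_1,t_2)$'', but $F_1(t_1,t_2)$ lies in $\F_3$, not $\F_2$, so Theorem~\ref{th:Parshin_sum_point_curve} does not apply there. In the approach of \cite{bolbachan_2023_chow}, the Parshin reciprocity is applied on fields in $\F_2$ --- namely $F_1(t)$ itself (for independence of~$a$) and $F_2(s)$ (for transitivity) --- and the two-step nature of the composite norm is unwound by repeatedly using the defining formula $H_h(b)=-\sum_{\mu\ \text{sp}}\mathcal N_\mu(h)(\partial_\mu b)$ together with Theorem~\ref{th:main_exact_sequence}(2), rather than by a single bivariate lift in a transcendence-degree-$3$ field. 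Your acknowledgment that ``handling the auxiliary chains is the principal obstacle'' is apt, but the ambient field you have chosen makes the obstacle insurmountable as written.
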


The proof of this theorem is similar to the construction of the norm map on Milnor $K$-theory \cite{suslin1979reciprocity}. Case $m=2$ was proved in \cite[Theorem 1.14]{bolbachan_2023_chow}. The general case is similar.

Let us explain the proof of Theorem \ref{th:strong_suslin_reciprocity_law_field}. Item 1 follows directly from the previous theorem. (See the proof of \cite[Theorem 1.8.]{bolbachan_2023_chow}). Item 2 follows from Corollary \ref{cor:SRL_on_Totaro} and Proposition \ref{prop:strong_low_P^1}.

The proof of the first item of Theorem \ref{th:two_dimensional_reciprocity_law} is similar to the proof of \cite[Corollary 1.13]{bolbachan_2023_chow}. The second item follows from Lemma \ref{lemma:finitnes_of_sum}.

\section{The proof of the main result}
\label{sec:Lambda_and_polylogarithms}

\subsection{The map $\mc T$}
\label{sec:Totaro_map}
We recall that the chain complex $\Gamma_{\geq m-1}(\k,m)$ is concentrated in degrees $m-1,m$ and has the following form:
$$\B_2(\k)\otimes\Lambda^{m-2}(\k^\times\otimes_\Z\Q)/S\xrightarrow{\delta_m} \Lambda^m(\k^\times\otimes_\Z\Q).$$
The subgroup $S$ is generated by the elements of the form $$\{a\}_2\otimes a\wedge c_4\wdw c_m.$$ 
The map $\delta_m$ is defined by the formula $\delta_m(\{a\}_2\otimes c_3\wdw c_m)=a\wedge(1-a)\wedge c_3\wdw c_m$.

Let us recall the definition of the map $\T$. It is a map from $\wG(\k,m)$ to $\wCH(\k,m)$, where $\wCH(\k,m)=\tau_{\geq m-1}(\CH(\k,m))/\M(\k,m)$. The element $\{a\}_2\otimes c_3\wdw c_{m}$ goes to 
$[t,(1-t),(1-a/t),c_3,\dots, c_{m}]_{\P^1}.$ The element $c_1\wdw c_m$ goes to
$[c_1,\dots, c_m]_{\spec\k}.$
The goal of this subsection is to prove the following proposition. 

\begin{proposition}
\label{prop:T_is_well_defined}
    The map $\T$ is a well-defined morphism of complexes.
\end{proposition}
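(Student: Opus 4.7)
The plan is to verify the two required properties of $\T$ separately: well-definedness on the generators of $\wG(\k,m)$ modulo $\M(\k,m)$, and commutativity with the differentials. Both are essentially the content of \cite{cubical1999herbert}, which adapted Totaro's original construction for $m=2$ to the cubical setting; the new ingredient here is simply tracking how the boundary issues are absorbed by the subcomplex $\M(\k,m)$.

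For well-definedness, in top degree the assignment $c_1 \wedge \dots \wedge c_m \mapsto [c_1, \dots, c_m]_{\spec\k}$ respects antisymmetry (since $\N(\k,m)$ is defined via the alternating projector) and respects multiplicativity $(c_1 c_2) \wedge c_3 \wedge \dots \mapsto c_1 \wedge c_3 \wedge \dots + c_2 \wedge c_3 \wedge \dots$ precisely because $\M(\k,m)_m$ was defined to contain the relations $[c_1 c_2, a_2,\dots] - [c_1, a_2,\dots] - [c_2, a_2,\dots]$. In degree $m-1$, antisymmetry and multiplicativity in the $\L^{m-2}\k^\times$ factor are handled in the same way. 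The degenerate symbols $\{0\}_2, \{1\}_2, \{\infty\}_2$ map to cycles in which one of the coordinates $t$, $1-t$, $1-a/t$ is identically $0$, $\infty$, or $1$, hence to cycles that are either zero or degenerate (lying in $D^m$). The five-term relation for $\B_2$ is the subtlest point: following Totaro, one constructs an auxiliary two-parameter admissible cycle on a suitable blowup of $\P^1 \times \P^1$ whose cubical boundary realizes, modulo $\M(\k,m)$, the Totaro image $\sum_{i=1}^5 (-1)^i [t, 1-t, 1-a_i/t, c_4, \dots, c_{m+1}]_{\P^1}$ of the five-term sum, where $a_i$ denotes the $i$-th cross-ratio; this is exactly what is extracted from \cite{cubical1999herbert}.

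For the differential compatibility, since $\wG(\k,m)$ is concentrated in degrees $m-1$ and $m$, the only nontrivial check is
\[
d\,\T(\{a\}_2 \otimes c_4 \wedge \dots \wedge c_{m+1}) \;=\; \T \delta_m(\{a\}_2 \otimes c_4 \wedge \dots \wedge c_{m+1}) \;=\; [a, 1-a, c_4, \dots, c_{m+1}]_{\spec\k}.
\]
I would compute the left-hand side by enumerating the codimension-one face contributions of the cycle $[t, 1-t, 1-a/t, c_4, \dots, c_{m+1}]_{\P^1}$. The face $t = a$ (a simple zero of $1-a/t$) gives precisely $[a, 1-a, c_4, \dots, c_{m+1}]_{\spec\k}$, which is the target term. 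The other potential contributions arise from $t \in \{0, 1, \infty\}$, where the image actually meets a codimension-two face improperly; after symmetrizing, these terms correspond to cycles in which two among $t, 1-t, 1-a/t$ vanish or blow up simultaneously, and they land in $\M(\k,m)_m$ by its definition, hence vanish in $\wCH(\k,m)$.

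The main obstacle — which is the very reason for passing to $\wCH$ rather than to $\N$ itself — is controlling the improper intersection of the Totaro cycle with higher-codimension faces at $t \in \{0,1,\infty\}$. The subcomplex $\M(\k,m)$ is tailored precisely to absorb these improper contributions and, simultaneously, to ensure that the five-term relation in $\B_2$ maps to zero. Once one accepts the cubical framework of \cite{cubical1999herbert}, the remaining work is a careful bookkeeping of boundary terms rather than any new conceptual input.
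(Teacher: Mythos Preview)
Your outline has a genuine gap in degree $m-1$: since $\wG(\k,m)_{m-1}$ is the cokernel of $\delta_m\colon \B_3(\k)\otimes\L^{m-3}\k^\times \to \B_2(\k)\otimes\L^{m-2}\k^\times$, and $\delta_m(\{a\}_3\otimes c) = \{a\}_2\otimes a\wedge c$, you must also verify that $[t,1-t,1-a/t,a,c_5,\dots,c_{m+1}]_{\P^1}=0$ in $\wCH(\k,m)_{m-1}$. This element has three nonconstant coordinates, so it does not lie in $\M(\k,m)_{m-1}$, and it is not covered by any of the checks you list. The paper exhibits an explicit two-dimensional cycle on $\P^2$ (taken from \cite{bloch_rriz_1994_mixed}) whose boundary equals this element; for $m\geq 3$ this is an independent verification without which $\T$ is not even defined on $\wG(\k,m)_{m-1}$.

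Two smaller corrections. Multiplicativity in the $\L^{m-2}\k^\times$ factor is not literally absorbed by $\M(\k,m)_{m-1}$ (again three nonconstant coordinates); the paper writes down an explicit boundary coming from a cycle on $\P^1\times\P^1$. And your diagnosis of the ``main obstacle'' is off: the Totaro cycle is already admissible, since at each of $t\in\{0,1,\infty\}$ one of the first three coordinates equals $1$ and the point lies outside $\square^{m+1}$; the boundary therefore has the single clean contribution at $t=a$, with no $\M$-correction needed. On the five-term relation the paper actually takes a route different from yours: it observes that the image of the five-term sum is closed, reduces to algebraically closed $\k$ by Galois descent, transports along the already-established isomorphism $\wW\colon\wCH(\k,m)\to\wL(\k,m)$, and checks the vanishing in $\wL$ via the elementary identity $\sum_{i}(-1)^i\omega(l_1,\dots,\hat l_i,\dots,l_5)=0$ in $\EP{3}{\P^1}$. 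Your explicit-cycle approach is a valid alternative, and the paper does cite \cite{cubical1999herbert} for it.
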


The proof of this proposition essentially repeats arguments from \cite{cubical1999herbert}. 

Let $a\in \k\bs\{0\}$. Define
$$T_a = [\P^1, (t\wedge (1-t)\wedge (1-a/t))].$$
Let $V$ be a $2$-dimensional vector space over $\k$ and $l_i\in V^{*}$. Denote by $T(l_1, l_2, l_3, l_4)$ the element $[\P(V), \omega(l_1, \dots, l_4)]$, where $$\omega(l_1, l_2, l_3, l_4)=\dfrac {l_1}{l_4}\wedge \dfrac {l_2}{l_4}\wedge \dfrac {l_3}{l_4}.$$

We recall that cross-ratio is defined by the following formula 

$$c.r.(a,b,c,d)=\dfrac{(a-c)(b-d)}{(a-d)(b-c)}.$$

We need the following lemma:

\begin{lemma}
\label{lemma:Abel_five_term_relations}
    Let $x_1,\dots, x_5$ be five different points on $\mb \P^1$. Then
    $$\sum\limits_{i=1}^5(-1)^iT_{c.r.(x_1,\dots,\hat{x_i},\dots, x_5)}=0\in\L(\k,2)_{1}.$$
\end{lemma}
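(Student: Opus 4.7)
The plan is to deduce the five-term relation as a boundary in $\Lambda(\k,2)$ arising from a configuration of five lines in $\mathbb{P}^2$. Three preparatory steps precede the main residue calculation.

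First I would establish an identification: for any four linear forms $l_1,l_2,l_3,l_4$ on a $2$-dimensional $\k$-vector space with distinct zeros $x_1,\dots,x_4\in\mathbb{P}^1$, one has $T(l_1,l_2,l_3,l_4)=T_{cr(x_1,x_2,x_3,x_4)}$ in $\Lambda(\k,2)_1$. The argument uses an automorphism of $\mathbb{P}^1$ sending $(x_1,x_2,x_4)$ to $(0,1,\infty)$ and $x_3$ to $\lambda$; each ratio $l_i/l_4$ becomes a scalar multiple of the corresponding affine function, and upon expanding $\bigwedge_i(l_i/l_4)$ every monomial containing a constant factor is killed either by Lemma \ref{lemma:Beilinson_Soule_vanishing} or by Lemma \ref{lemma:about_degenerate_cycles}. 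The surviving term $[\mathbb{P}^1,t\wedge(t-1)\wedge(t-\lambda)]$ is shown to equal $T_\lambda$ by noting that $(1-\lambda/t)=(t-\lambda)/t$, so $t\wedge(1-t)\wedge(1-\lambda/t)=t\wedge(1-t)\wedge(t-\lambda)$, and by absorbing the constant from $1-t=-(t-1)$ via the same Beilinson-Soule-type lemma.

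Second I would construct the geometric setup. Given five distinct points $x_1,\dots,x_5\in\mathbb{P}^1$, embed $\mathbb{P}^1$ as a smooth conic $C\subset\mathbb{P}^2$ via the Veronese map, and let $L_i$ be the tangent to $C$ at $x_i$. Since no three tangents of a smooth conic are concurrent, the lines $L_1,\dots,L_5$ are in general position and $\bigcup L_i$ is a simple normal crossing divisor. The classical theorem on cross-ratios of points on a conic, read off from tangent-line intersections, shows that the four intersection points $L_i\cap L_j$ with $j\ne i$, viewed on $L_i\cong\mathbb{P}^1$, have cross-ratio equal to $\lambda_i=cr(x_1,\dots,\widehat{x_i},\dots,x_5)$.

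Third I would take $\alpha=[\mathbb{P}^2,\Omega]\in\Lambda(\k,2)_0$, where $\Omega=\bigwedge_{i=1}^4(L_i/L_5)$. Direct expansion gives the manifestly antisymmetric formula $\Omega=\sum_{i=1}^5(-1)^{i-1}L_1\wedge\cdots\widehat{L_i}\cdots\wedge L_5$, and strict regularity of $\Omega$ on $\mathbb{P}^2$ (from the SNC property) ensures that $d\alpha$ is supported on the five divisors $D_i=\{L_i=0\}$. A tame-symbol computation yields $\partial_{D_i}\Omega=(-1)^{i-1}\omega(L_j|_{D_i}:j\ne i)$, and combining with the two preceding steps one obtains
$$d\alpha=\sum_{i=1}^5(-1)^{i-1}T(L_j|_{D_i}:j\ne i)=\sum_{i=1}^5(-1)^{i-1}T_{\lambda_i}.$$
Hence $\sum_{i=1}^5(-1)^iT_{\lambda_i}=-d\alpha$ is a boundary, giving the desired vanishing in $\Lambda(\k,2)_1$ modulo the image of $d$, which is precisely the statement needed in the truncated complex $\widetilde\Lambda(\k,2)_1$. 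The main obstacle is the cross-ratio identification of step two: the classical statement for four points on a conic, expressed via tangent-line intersections on a fifth tangent, must be applied with careful attention to orderings and signs so that the five residues produce the exact alternating pattern required by the five-term relation on $\{a\}_2$.
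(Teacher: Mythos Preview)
Your argument is internally sound, but it proves a weaker statement than the lemma claims. You show the alternating sum is $-d\alpha$ for $\alpha=[\mathbb P^2,\Omega]$, hence zero in $\widetilde\Lambda(\k,2)_1=\Lambda(\k,2)_1/\im(d)$; the lemma asserts vanishing in $\Lambda(\k,2)_1$ itself, with no quotient. You note this suffices for the application, and that is true, but as a proof of the stated lemma it falls short.

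The paper's route is both stronger and shorter, and it uses the very identity you write down---just one dimension lower. For $V$ two-dimensional and $l_1,\dots,l_5\in V^*$, the expansion $\bigwedge_{j\ne k}(l_j/l_k)=\sum_{i}(-1)^{i}\,l_1\wedge\cdots\widehat{l_i}\cdots\wedge l_5$ (up to an overall sign) already holds in $\Lambda^3(\k(\mathbb P(V))^\times)$. Alternating over $k$ gives
\[
\sum_{i=1}^5(-1)^i\,\omega(l_1,\dots,\widehat{l_i},\dots,l_5)=0\in \mathrm{EP}^3(\mathbb P(V)),
\]
whence $\sum_i(-1)^iT(l_1,\dots,\widehat{l_i},\dots,l_5)=0$ in $\Lambda(\k,2)_1$ on the nose. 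Your Step~1, i.e.\ $T(l_1,l_2,l_3,l_4)=T_{\mathrm{cr}}$, is exactly what the paper then uses (established via the symmetries $T_a=-T_{1/a}=-T_{1-a}$) to convert this into the five-term relation for the $T_{\mathrm{cr}}$'s.

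So the detour through $\mathbb P^2$, the Veronese conic, and the tangent-line cross-ratio identity is unnecessary: the combinatorial cancellation you exploit on $\mathbb P^2$ is available already on $\mathbb P^1$, and there it yields the exact vanishing rather than merely a boundary. Your conic picture is correct and pleasant geometry, but it costs you the sharpness of the conclusion.
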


This lemma, in turn, follows from another lemma:

\begin{lemma}The following statements are true:
    \begin{enumerate}
        \item Let $l_1,\dots, l_4\in V^*$. Assume that any two of these vectors are linearly independent. Then we have
        $$T(l_1, l_2, l_3, l_4)=-T_{c.r.(\pi(l_1),\pi(l_2), \pi(l_3), \pi(l_4))}.$$
        In this formula $\pi\colon V^*\bs \{0\}\to \P(V^*)$ is the natural projection and $c.r.(\cdot)$ is the cross-ratio.
         \item Let $l_1,\dots, l_5\in V^*$. Assume that any two of these vectors are linearly independent. Then we have
        $$\sum\limits_{i=1}^5(-1)^iT(l_1,\dots\hat{l_i},\dots l_5)=0.$$
    \end{enumerate}
\end{lemma}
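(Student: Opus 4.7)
The proof plan rests on two elementary tools: invariance of $[\P^1,\omega]$ under automorphisms $\sigma \in PGL_2(\k)$ of $\P^1$ (any such $\sigma$ is an alteration of degree one, so $[\P^1,\omega] = [\P^1,\sigma^*\omega]$), and Lemma \ref{lemma:Beilinson_Soule_vanishing}, which kills any summand containing a constant as a wedge factor. Both facts will be applied repeatedly to simplify wedge expressions.

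For part (1), I would prove $T_a = -T_{1-a}$ by pulling back along the involution $\sigma(t) = 1-t$. After expanding $\sigma^*\bigl(t\wedge(1-t)\wedge(1-(1-a)/t)\bigr)$ by multiplicativity and dropping every summand that contains $-1$ in a slot (which vanishes by Lemma \ref{lemma:Beilinson_Soule_vanishing}), what remains is exactly $-T_a$. The identity $T_a = -T_{1/a}$ is handled in the same spirit with $\sigma(t) = t/a$.

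For part (2), I would first remark that $T(l_1,l_2,l_3,l_4)$ depends only on the projective classes $\pi(l_i)$: rescaling $l_i$ by $c \in \k^{\times}$ contributes a term containing the constant $c$, which vanishes by Beilinson-Soule. Using the transitive $PGL_2$-action on ordered triples of distinct points in $\P^1$, I would normalise $(\pi(l_1),\pi(l_2),\pi(l_3),\pi(l_4)) = (0,1,a,\infty)$ with $a$ equal to the cross-ratio, and take representatives $l_1 = u$, $l_2 = u-v$, $l_3 = u-av$, $l_4 = v$. With $t = u/v$, the element becomes $[\P^1,\,t\wedge(t-1)\wedge(t-a)]$. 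Expanding $T_a = [\P^1,\,t\wedge(1-t)\wedge(1-a/t)]$ via $(1-a/t) = (t-a)/t$ and $(1-t) = -(t-1)$, and once more discarding constants via Beilinson-Soule, identifies it with the same element.

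For part (3), the crucial observation is that the identity already holds in the wedge algebra $\Lambda^3\k(\P(V))^{\times}$, before passing to the class in $\L(\k,2)_1$. Expanding $\omega(m_1,m_2,m_3,m_4) = (m_1/m_4)\wedge(m_2/m_4)\wedge(m_3/m_4)$ by multiplicativity yields
\[ \omega(m_1,m_2,m_3,m_4) \;=\; m_1\wedge m_2\wedge m_3 \,-\, m_1\wedge m_2\wedge m_4 \,+\, m_1\wedge m_3\wedge m_4 \,-\, m_2\wedge m_3\wedge m_4, \]
which is (up to sign) the simplicial boundary of $m_1\wedge m_2\wedge m_3\wedge m_4$. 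The alternating sum $\sum_{i=1}^5(-1)^i\omega(l_1,\ldots,\hat{l_i},\ldots,l_5)$ is then exactly the iterated simplicial boundary $\partial^2$ applied to $(l_1,\ldots,l_5)$; each triple $l_a\wedge l_b\wedge l_c$ with $a<b<c$ appears in precisely the two summands indexed by the two omitted indices, with coefficients that cancel, so the sum is identically zero. I do not anticipate any substantive obstacle: the whole argument reduces to multiplicativity of the wedge, $PGL_2$-invariance, Beilinson-Soule vanishing, and an elementary $\partial^2 = 0$ identity in part (3).
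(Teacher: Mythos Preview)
Your proposal is correct and follows essentially the same strategy as the paper: $PGL_2$-invariance plus Lemma \ref{lemma:Beilinson_Soule_vanishing} for parts (1) and (2), and the $\partial^2=0$ identity in $\Lambda^3\k(\P(V))^\times$ for part (3). The only cosmetic difference is in part (1): rather than pulling back along two separate involutions, the paper first shows via a single affine change of variable that $[\P^1,(t-a)\wedge(t-b)\wedge(t-c)]=T_{(c-a)/(b-a)}$, so that both identities $T_a=-T_{1/a}$ and $T_a=-T_{1-a}$ fall out simultaneously from the manifest antisymmetry of the wedge in $a,b,c$.
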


\begin{proof}\begin{enumerate}
\item
   It follows from Lemma \ref{lemma:Beilinson_Soule_vanishing} that for any $\lambda_1,\lambda_2, \lambda_3, \lambda_4\in \k^\t$ we have
    $$T(l_1, l_2, l_3, l_4)=T(\lambda_1 l_1, \lambda_2 l_2,\lambda_3 l_3,\lambda_4 l_4).$$
    So we can assume that $V=\k^2$ and $l_1=(1,-a), l_2=(1,-b), l_3=(1,-c), l_4=(0,1)$ for some $a,b,c\in \k$. We get:
    $$T(l_1, l_2, l_3, l_4)=[\P^1, (t-a)\wedge(t-b)\wedge (t-c)].$$

    Let $\varphi\cl \P^1\to\P^1$ be a map given by rhe formula $\varphi(t)=t(b-c)+c$. We get:
    \begin{align*}
       & [\P^1, (t-a)\wedge (t-b)\wedge (t-c)]=(\deg \varphi^{-1})[\P^1, \varphi^*((t-a)\wedge (t-b)\wedge (t-c))]=\\
       &[\P^1, ((b-c)t-(a-c))\wedge ((b-c)t-(b-c))\wedge ((b-c)t)].
    \end{align*}

    By lemma \ref{lemma:Beilinson_Soule_vanishing} we know that for any $f,g\in\k(t)^\times$ and $c\in\k^\times$ we have $[\P^1,f\wedge g\wedge c]=0$. We get
    \begin{align*}
        &[\P^1, ((b-c)t-(a-c))\wedge ((b-c)t-(b-c))\wedge ((b-c)t)]=\\&[\P^1, (t-(a-c)/(b-c))\wedge (1-t)\wedge t]=-T_{(a-c)/(b-c)}.
    \end{align*}

On the other hand
$$c.r.(\pi(l_1), \pi(l_2), \pi(l_3), \pi(l_4))=c.r.(-1/a,-1/b,-1/c,0)=c.r.(a,b,c,\infty)=\dfrac{a-c}{b-c}.$$

\item This formula follows from the following formula which is obtained by a direct computation:
$$\sum\limits_{i=1}^5(-1)^i\omega(l_1,\dots, \widehat{l_i},\dots, l_5)=0\in \EP{3}{\P(V)}.$$
\end{enumerate}
\end{proof}

\begin{proof}[The proof of Proposition \ref{prop:T_is_well_defined}]
    According to \cite{goncharov1994polylogarithms}, the group $\mc R_2(\k)$ is generated by the following elements:
    $$\sum\limits_{i=1}^5(-1)^i\{c.r.(z_1,\dots, \widehat z_i,\dots, z_5)\}_2, \{0\}_2,, \{1\}_2, \{\infty\}_2.$$
    In this formula $z_i$ are five different points on $\P^1$. Let $b_1,b_2,c_4,\dots, c_{n}$ are non-zero elements of $\k$. We need to show that in $\wCH(\k,m)$ the following relations hold:
    \begin{enumerate}
        \item $\sum\limits_{i=1}^5(-1)^i[t,1-t,1-c.r.(z_1,\dots, \hat z_i,\dots, z_5)/t, c_4,\dots, c_{n}]_{\P^1}=0.$
        \item We have
        \begin{align*}
            &[t,1-t,1-a/t, b_1b_2,c_5,\dots, c_{n}]_{\P^1}=\\&[t,1-t,1-a/t, b_1,c_5,\dots, c_{n}]_{\P^1}+[t,1-t,1-a/t,  b_2,c_5,\dots, c_{n}]_{\P^1}.
            \end{align*}
        \item $[t,1-t,1-a/t, a, c_5,\dots, c_{n}]_{\P^1}=0.$
        \item $d([t,1-t,1-a/t, c_4, c_5,\dots, c_{n}]_{\P^1})=[a,1-a,c_4,\dots, c_{n}]_{\P^1}.$
    \end{enumerate}
    Here is the proof:
    \begin{enumerate}
        \item We can assume that $m=2$. Denote by $\delta_2$ the differential in the complex $\G(\k,2)$. It is well-known(and follows from the fact that $\delta_2$ is well-defined) that $\delta_2$ vanishes on the Abel five-term relation. As the group $\wCH(\k,2)_2$ can be identified with $\L^2\k^\t$, this implies that the element
        $$\sum\limits_{i=1}^5(-1)^i[t,1-t,1-c.r.(z_1,\dots, \hat z_i,\dots, z_5)/t]_{\P^1}$$
        is closed.
        We need to show that this element is equal to zero. By Lemma \ref{lemma:Galois_descent} the cohomology $H^i(\wCH(\k,m))$ satisfies Galois descent. So we can assume that $\k$ is algebraically closed. In Section \ref{sec:W_isomorphsim} we showed that the map $\wW\colon \wCH(\k,m)\to\wL(\k,m)$ is an isomorphism. So it is enough to prove the corresponding relation in the complex $\wL(\k,m)$. This follows from Lemma \ref{lemma:Abel_five_term_relations}. See also \cite{cubical1999herbert}.
        \item Let $(t,w)$ be coordinates in $\P^1\t\P^1$. The expression is the boundary of the following cycle:
        $$\left[t, 1-t, (1-a/t), \dfrac{(w-b_1b_2)(w-1)}{(w-b_1)(w-b_2)}, w, c_5,\dots, c_n\right]_{\PP}.$$
        \item It is enough to prove that
        \begin{align*}
        &d([x_1,x_2,(1-x_1),(1-x_2/x_1),(1-a/x_2),\\& c_5,\dots, c_{n}]_{\P^2})=
        [t,(1-t),(1-a/t),a,c_5,\dots,c_{n}]_{\P^1}.
    \end{align*}
This formula was proved in \cite{bloch_rriz_1994_mixed}, see also \cite{goncharov_levin_gangl_2009_muly_polyl_alg_cycles}. Let $f_i$ be the $i$-th function on the left hand-side. Denote by $Z$ the closure of the zeros of the functions $f_i-1$. Let $U=\P^2\bs Z$. It is easy to see that the only prime components of the divisors of $f_i$ on $U$ are $x_1=x_2$ and $x_2=a$. Moreover these divisors belong only to the functions $1-x_2/x_1$ and $1-a/x_2$ correspondingly. This imply that the cycle on the left hand-side is admissible. For the divisor $x_1=x_2$ the term in the differential is zero. For  the divisor $x_2=a$ we get 
$$[t,1-t,1-a/t, a, c_5,\dots, c_{n}]_{\P^1}.$$
        \item Indeed the cycle intersects only the face given by the equation $x_3=0$. We get $t=a$ and so we get the cycle $[a,1-a,c_3,\dots, c_n]$. The statement follows.
        \end{enumerate}
\end{proof}

\subsection{The map $\mc T'$}
\label{subsec:map:Totaro_prime}
Define a morphism of complexes $$\T'\colon \wG(\k,m)\to \wL(\k, m)$$ as follows. The element $\{a\}_2\wedge c_3\wdw c_m$ goes to
$$[\P^1, t\wedge (1-t)\wedge (1-a/t)\wedge c_3\wdw c_m].$$
The element $c_1\wdw c_m$ goes to
$[\spec \k, c_1\wdw c_m]$. It follows from the definition that $\mc T'=\wW\circ\mc T$. In particular $\mc T'$ is a well-defined morphism of complexes. 

\begin{theorem}[Theorem \ref{th:Totaro_isomorphism_intro}]
\label{th:Totaro_is_morphism_of_complexes}
    Let $\k$ be algebraically closed. The map $\T'$ is an isomorphism.
\end{theorem}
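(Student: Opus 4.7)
My plan is to check the isomorphism in each of the two non-trivial degrees of the truncated complex $\wL(\k,m)=\tau_{\ge m-1}\L(\k,m)$ separately. In degree $m$ the argument is trivial: since $\k$ is algebraically closed, the only $0$-dimensional integral $\k$-variety is $\spec \k$, hence $\L(\k,m)_m=\Lambda^m\k^\times$, and $\T'$ is manifestly the identity there. The content is concentrated in degree $m-1$, where I must show
$$\T'_{m-1}\colon \frac{\mc B_2(\k)\otimes\Lambda^{m-2}\k^\times}{\im\delta_m}\longrightarrow \frac{\L(\k,m)_{m-1}}{\im d}$$
is an isomorphism.

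For injectivity I would construct an explicit left inverse $\mc S$ from the strong Suslin reciprocity law of Theorem \ref{th:strong_suslin_reciprocity_law_field}. For a smooth projective curve $Y$ and $a\in\Lambda^{m+1}\k(Y)^\times$ I set $\mc S([Y,a]):=-\mc H_{\k(Y)}(a)$. The alteration relation is respected by the functoriality property (1) of that theorem, and vanishing on $\im d$ is exactly Theorem \ref{th:two_dimensional_reciprocity_law}(2): for a smooth proper surface $S$ and a strictly regular $b\in\Lambda^{m+2}\k(S)^\times$,
$$\mc S\bigl(d[S,b]\bigr) \;=\; -\sum_{C\subset S}\mc H_{\k(C)}(\ts_C(b)) \;=\; 0.$$
Finally, the identity $\mc S\circ\T'=\mathrm{id}$ follows immediately from the normalisation (2) of Theorem \ref{th:strong_suslin_reciprocity_law_field}:
$$\mc S\T'(\{a\}_2\otimes c_4\wdw c_{m+1}) \;=\; -\mc H_{\k(t)}\bigl(t\wedge(1-t)\wedge(1-a/t)\wedge c_4\wdw c_{m+1}\bigr) \;=\; \{a\}_2\otimes c_4\wdw c_{m+1}.$$

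For surjectivity I would invoke Proposition \ref{pr_intro:Lambda_is_generated_by_rationally_connected}: modulo $\im d$, the group $\L(\k,m)_{m-1}$ is generated by classes $[\P^1,w]$ with $w\in\Lambda^{m+1}\k(t)^\times$. Because $\k$ is algebraically closed, the multilinearity of the wedge lets me write any such $w$ as a $\mb Q$-linear combination of elementary wedges $g_1\wdw g_{m+1}$ with each $g_i\in\k^\times\cup\{t-\alpha:\alpha\in\k\}$. Let $k$ denote the number of non-constant factors. If $k\le 2$, the term vanishes by Lemma \ref{lemma:Beilinson_Soule_vanishing}; if $k=3$, the change-of-variable in the proof of Lemma \ref{lemma:Abel_five_term_relations} rewrites the wedge in the Totaro form $t\wedge(1-t)\wedge(1-\alpha/t)\wedge(\text{constants})$, which is manifestly $\T'(\{\alpha\}_2\otimes(\text{constants}))$. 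The main obstacle I expect is the case $k\ge 4$: my plan is to realize such wedges as tame-symbol contributions of carefully chosen strictly regular elements $b\in\Lambda^{m+2}\k(\P^1\times\P^1)^\times$, obtained by ``gluing'' a Totaro configuration in an auxiliary variable $s$ to the remaining $t$-factors, so that the boundary $d[\P^1\times\P^1,b]$ expresses the target as a combination of a Totaro-form element and wedges with strictly fewer non-constant factors; an induction on $k$ then terminates at the $k\le 3$ cases already handled.
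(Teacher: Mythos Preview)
Your proposal is correct and follows essentially the same route as the paper: the left inverse you call $\mc S$ is the map $\mc{CD}$ constructed there from the lifted reciprocity maps of Theorem~\ref{th:strong_suslin_reciprocity_law_field}, and your surjectivity induction on the number $k$ of non-constant factors in $[\P^1,w]$ is exactly Proposition~\ref{prop:Lambda_generated_by_Totaro_cycles}. For the inductive step $k\ge 4$ the paper's explicit choice is the class $[\P^1\times\P^1,\,x\wedge(1-x)\wedge(x-(t-a_1)/(t-a_2))\wedge(t-a_3)\wdw(t-a_k)\wedge c_{k+1}\wdw c_n]$, with Lemma~\ref{lemma:P1_genrated_by_Totaro} controlling the exceptional-divisor contributions.
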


Let us show that the map $\mc T'$ is surjective. This follows from the following result:

\begin{proposition}
\label{prop:Lambda_generated_by_Totaro_cycles}
    Let $\k$ be algebraically closed. The group $\L(\k, m)_{m-1}/\im(d)$ is generated by the elements of the form
    $$[\P^1, t\wedge (1-t)\wedge (1-a/t)\wedge c_4\wdw c_{n}].$$   
\end{proposition}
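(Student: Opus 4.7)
By Theorem \ref{th:Lambda_is_generated_by_rationally_connected} the group $\L(\k,m)_{m-1}/\im(d)$ is generated by classes $[\P^1, w]$ with $w\in\L^{m+1}(\k(t)^\t)$. Since $\k$ is algebraically closed, every nonzero element of $\k(t)$ is a product of a constant and linear forms $(t-a)$, $a\in\k$. By multilinearity of the wedge I may therefore assume $w = l_1\wdw l_{m+1}$ in which each $l_i$ is either a constant in $\k^\t$ or of the form $(t-a_i)$. Let $k$ denote the number of $l_i$ of linear form. The plan is to induct on $k$.

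For $k\leq 2$ the class $[\P^1, w]$ vanishes by Lemma \ref{lemma:Beilinson_Soule_vanishing}, since at least $m-1$ of the factors are then constants. For $k=3$, applying the automorphism $t\mapsto (b-a)t+a$ of $\P^1$, using antisymmetry of the wedge, and discarding the terms that end up with at most $2$ non-constant factors (which vanish by Lemma \ref{lemma:Beilinson_Soule_vanishing}), the class $[\P^1,(t-a_1)\wedge(t-a_2)\wedge(t-a_3)\wedge c_4\wdw c_{m+1}]$ rewrites as the Totaro cycle $[\P^1, t\wedge(1-t)\wedge(1-\lambda/t)\wedge c_4\wdw c_{m+1}]$ with $\lambda=(a_3-a_1)/(a_2-a_1)$, exactly as in the computation carried out in the proof of the auxiliary lemma supporting Lemma \ref{lemma:Abel_five_term_relations}.

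For the inductive step, let $k\geq 4$ and assume the proposition for every strictly smaller value. On $S=\P^1_s\times\P^1_t$ consider
\[
v=(t-s)\wedge(t-a_1)\wedge(t-a_2)\wedge(s-a_3)\wdw(s-a_k)\wedge c_{k+1}\wdw c_{m+1}\in\L^{m+2}(\k(S)^\t),
\]
so that $[S,v]\in\L(\k,m)_{m-2}$ and $d[S,v]\in\im(d)$. Computing residues divisor by divisor, the diagonal $\{s=t\}$ yields, up to sign, the target class $[\P^1,(s-a_1)\wdw(s-a_k)\wedge c_{k+1}\wdw c_{m+1}]$. Each vertical divisor $\{t=a_i\}$ with $i\in\{1,2\}$ produces, after expanding $(a_i-s)=(-1)\cdot(s-a_i)$ additively, a sum of a class with $k-2$ and a class with $k-1$ linear factors in the variable $s$ (wedged with constants); by the inductive hypothesis together with the $k\leq 3$ base cases these are sums of Totaro cycles. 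Each horizontal divisor $\{s=a_j\}$ with $3\leq j\leq k$ produces a $3$-linear-factor class in $t$, Totaro by the $k=3$ case. Finally, the divisors at infinity $\{t=\infty\}$ and $\{s=\infty\}$ contribute $0$: rewriting each factor of $v$ via the uniformizer $r=1/t$ or $u=1/s$, every surviving term of the tame symbol picks up a unit part of the form $[1-a_i r]|_{r=0}=[1]$ or $[1-a_j u]|_{u=0}=[1]$, and $[1]$ is the neutral element of $\k^\t$, hence zero in additive notation. Passing to the quotient $\L(\k,m)_{m-1}/\im(d)$ therefore expresses the $k$-linear-factor class as a sum of Totaro cycles, completing the induction.

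The main obstacle is the vanishing analysis at the divisors at infinity: one must unwind each factor through the chosen uniformizer, keep track of the signs in the tame symbol expansion, and exploit the additive normalization $[1]=0$ in $\k^\t$ to annihilate every remaining contribution. Everything else is bookkeeping together with Lemma \ref{lemma:Beilinson_Soule_vanishing} and the $k=3$ identity underlying Lemma \ref{lemma:Abel_five_term_relations}.
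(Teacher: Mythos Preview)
Your overall strategy is sound and close in spirit to the paper's, but there is a genuine gap in the inductive step. The differential $d[S,v]$ in $\L(\k,m)$ is \emph{defined} by first passing to an alteration on which the element becomes strictly regular (Definition~\ref{def:strictly_regular}), and only then summing tame symbols over divisors. Your element $v=(t-s)\wedge(t-a_1)\wedge(t-a_2)\wedge(s-a_3)\wdw(s-a_k)\wedge c_{k+1}\wdw c_{m+1}$ is \emph{not} strictly regular on $\P^1\times\P^1$: at the point $(\infty,\infty)$ the three divisors $\{s=t\}$, $\{t=\infty\}$, $\{s=\infty\}$ all meet, which is not simple normal crossing on a surface. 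Hence your ``divisor by divisor'' computation on $S$ is not, a priori, a computation of $d[S,v]$; you must blow up and account for the exceptional divisor. This is precisely the issue the paper addresses with its auxiliary Lemma~\ref{lemma:P1_genrated_by_Totaro}, and it is why the paper chooses its bounding element to have the shape $x\wedge(1-x)\wedge(\cdots)$: that shape forces every exceptional contribution into a controllable form.

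Your approach can be repaired. After a single blowup at $(\infty,\infty)$ the element becomes strictly regular (assuming the $a_i$ are distinct, which one may arrange), and on the exceptional divisor $E$ the unit parts of the factors $(t-a_1)$ and $(t-a_2)$ both restrict to $1$; since every term in the tame-symbol expansion omits only one factor, each surviving term still contains at least one of these two $1$'s, so $\ts_E(\varphi^*v)=0$. With this addition your argument goes through. One small correction to your infinity analysis: at $\{s=\infty\}$ the unit part of $(t-s)$ restricts to $-1$, not $1$; the conclusion is unaffected because $-1$ is torsion and hence zero in $\k^\times\otimes\Q$, but your stated reason is not quite right.

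By contrast, the paper's inductive step uses the element
\[
\bigl[\P^1\times\P^1,\; x\wedge(1-x)\wedge\bigl(x-\tfrac{t-a_1}{t-a_2}\bigr)\wedge(t-a_3)\wdw(t-a_k)\wedge c_{k+1}\wdw c_n\bigr],
\]
and invokes Lemma~\ref{lemma:P1_genrated_by_Totaro} to show that every exceptional contribution already has the form $[\P^1, c\wedge(1-c)\wedge\beta_3\wdw\beta_k\wedge c_{k+1}\wdw c_n]$, hence lies in $A$ by induction. The principal non-exceptional residue, at $x=(t-a_1)/(t-a_2)$, expands (via multilinearity) to minus the target $k$-factor class plus $(k-1)$-factor terms. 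Your bounding element is simpler and more symmetric, but the price is that the exceptional analysis must be done by hand rather than delegated to a structural lemma.
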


\begin{lemma}
\label{lemma:P1_genrated_by_Totaro}
    Let $S=\P^1\t\P^1$. Consider the element
    $$a=[S, x\wedge (1-x)\wedge \alpha_2\wdw \alpha_k\wedge c_{k+1}\wdw c_n].$$
    In this formula $x$ is the canonical coordinates on the first $\P^1$, $\alpha_i\in\k(\P^1\t\P^1)$ and $c_i\in \k$. For any birational morphism $\ph\colon \wt S\to S$, and any divisor $E$ contracted under $\ph$, the element $[E, \ts_E(\ph^*(a))]$ can be represented as linear combination of the elements of the form
    $$[\P^1, c\wedge (1-c)\wedge  \beta_3\wdw \beta_{k} \wedge c_{k+1}\wdw c_n].$$
    In this formula $c\in \k\bs\{0,1\}$ and $\beta_i\in \k(\P^1)$.
\end{lemma}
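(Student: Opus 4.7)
The plan is to reduce to the case $E\cong\P^1$ and then analyse $\ts_E(\varphi^*(a))$ by a case analysis on the image point $p:=\varphi(E)\in S=\P^1\times\P^1$.

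First I would use the alteration relations in Definition \ref{def:Lambda} to replace $\wt S$ by an alteration without changing $[E,\ts_E(\varphi^*(a))]$ up to a nonzero scalar, so I may assume $\wt S$ is smooth and projective. A proper birational morphism between smooth projective surfaces factors as a sequence of point blow-ups, hence each contracted divisor is birational to $\P^1$; after normalising $E$ (another alteration) I may therefore assume $E\cong\P^1$. Write $p=(x_0,y_0)$ and split into four cases according to the value of $x_0\in\P^1$.

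The generic case $x_0\in\k\setminus\{0,1,\infty\}$ is routine: both $\varphi^*(x)$ and $\varphi^*(1-x)$ are units at $E$ with residues $x_0,1-x_0\in\k^\times$. Grouping all units (including the $c_j$ and any $\varphi^*(\alpha_i)$ that happens to be a unit) on one side of the wedge and invoking Lemma \ref{lemma:leibniz_rule_tame_symbol} produces
$$\ts_E(\varphi^*(a))=\pm\,\gamma\wedge x_0\wedge(1-x_0)\wedge c_{k+1}\wdw c_n,$$
where $\gamma\in\Lambda^{k-2}(\k(\P^1)^\times)$ is the tame symbol of the non-unit $\varphi^*(\alpha_i)$'s wedged with residues of the unit $\varphi^*(\alpha_i)$'s. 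Expanding $\gamma$ as a sum of decomposable wedges $\beta_3\wdw\beta_k$ delivers the required form with $c=x_0$. The degenerate cases $x_0\in\{0,1\}$ are even easier: one of $\varphi^*(x),\varphi^*(1-x)$ is then a unit at $E$ whose residue is the multiplicative identity $1\in\k(E)^\times$, which vanishes in the $\Q$-vector space $\k(E)^\times\otimes\Q$; Leibniz then produces a wedge with a zero factor and $\ts_E(\varphi^*(a))=0$.

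The hard part is the case $x_0=\infty$, in which both $\varphi^*(x)$ and $\varphi^*(1-x)$ have the same negative order at $E$ and neither is a unit, so Leibniz cannot be applied directly. The key will be to substitute $y=1/x$ and use the identity
$$x\wedge(1-x)=-y\wedge(-1)-y\wedge(1-y)$$
in $\Lambda^2(\k(S)^\times\otimes\Q)$, obtained by writing $[1-x]=[-1]+[1-y]-[y]$ and expanding. Since $\k$ is algebraically closed of characteristic zero, $\k^\times$ is divisible and the $2$-torsion element $-1$ vanishes in $\k^\times\otimes\Q$; hence the first summand disappears and $a$ becomes, up to sign, $y\wedge(1-y)\wedge\alpha_2\wdw\alpha_k\wedge c_{k+1}\wdw c_n$. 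But $\varphi^*(1-y)$ is now a unit at $E$ with residue $1$, so the argument of the previous paragraph gives $\ts_E(\varphi^*(a))=0$. Combining the four cases shows that $[E,\ts_E(\varphi^*(a))]$ is always a (possibly zero) linear combination of elements of the required form.
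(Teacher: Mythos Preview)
Your proposal is correct and follows the same case analysis as the paper: split on the first coordinate $x_0$ of $\varphi(E)$, dispose of $x_0\in\{0,1\}$ by producing a unit with residue $1$, and in the generic case pull out the constants $x_0,\,1-x_0$ via Lemma~\ref{lemma:leibniz_rule_tame_symbol}. At $x_0=\infty$ the paper uses a slightly different rewriting: instead of your substitution $y=1/x$ and the identity $x\wedge(1-x)=-y\wedge(1-y)$ modulo the torsion element $-1$, the paper writes $x\wedge(1-x)=(x/(1-x))\wedge(1-x)$ and observes directly that $\varphi^*(x/(1-x))$ is a unit along $E$ with residue $-1$; both arguments hinge on the same vanishing of $-1$ in $\k^\times\otimes\Q$. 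Your explicit reduction to $E\cong\P^1$ via the factorisation of birational morphisms of smooth surfaces into point blow-ups is a point the paper leaves implicit in the final sentence of its proof.
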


\begin{proof}[The proof of Lemma \ref{lemma:P1_genrated_by_Totaro}]
    Denote by $(x,t)$ the canonical coordinates on $S$. Let $\{(x_0,t_0)\}\subset S$ be the image of $E$. If $x_0=0$ then the the restriction of the function $\ph^*(1-x)$  to $E$ is equal to $1$ and so $[E, \ts_E(\ph^*(a))]=0$. The case $x_0=1$ is similar. Let us assume that $x_0=\infty$. In this case, the restriction of the function $\ph^*(x/(1-x))$ to $E$ is equal to $-1$. As the element $a$ can be rewritten in the form 
    $$a=[S, (x/(1-x))\wedge (1-x)\wedge \alpha_2\wdw \alpha_k\wedge c_{k+1}\wdw c_n],$$
    this implies that $[E, \ts_E(\ph^*(a))]=0.$
    
    So we can assume that $x_0\in \k\bs\{0,1\}$. This implies that the restriction of the function $\ph^*(x)$ to $E$ is constant. Denote this constant by $c$. We get
    $$[E, \ts_E(\ph^*(a))]=[E, c\wedge (1-c)\wedge \ts_{E}(\alpha_2\wdw \alpha_k)\wedge c_{k+1}\wdw c_n].$$
    The statement follows.
\end{proof}

\begin{proof}[The proof of Proposition \ref{prop:Lambda_generated_by_Totaro_cycles}]
Denote by $A$ the subgroup of the group $\L(\k, m)_{m-1}/\im(d)$ generated by the elements stated in the lemma.

    By Theorem \ref{th:Lambda_is_generated_by_rationally_connected} we know that the group $\L(\k, m)_{m-1}/\im(d)$ is generated by the elements of the form
    $$[\P^1, (t-a_1)\wdw (t-a_k)\wedge c_{k+1}\wdw c_n].$$
    Let us prove by induction on $k, k\geq 0$, that this element lies in $A$. 
    \begin{enumerate}
        \item The case $k\leq 2$ follows from Lemma \ref{lemma:Beilinson_Soule_vanishing}.
        \item Let $k=3$. Denote by $\ph$ an automorphism of $\P^1$ given by the formula $\ph(t)=a_1+(a_2-a_1)t$. We get
        \begin{align*}
            &[\P^1, (t-a_1)\wedge (t-a_2)\wedge (t-a_3)\wedge c_4\wdw c_n]=\\&[\P^1, \ph^*((t-a_1)\wedge (t-a_2)\wedge (t-a_3)\wedge c_4\wdw c_n)]=\\
            &[\P^1, (t(a_2-a_1))\wedge (t(a_2-a_1)-(a_2-a_1)\wedge (t(a_2-a_1)-(a_3-a_1))\wedge c_4\wdw c_n]=\\
            &\left[\P^1, t\wedge (1-t)\wedge \left(t-\dfrac {a_3-a_1}{a_2-a_1}\right)\wedge c_4\wdw c_n\right]=\\
            &\left[\P^1, t\wedge (1-t)\wedge \left(1-\left(\dfrac {a_3-a_1}{a_2-a_1}\right)/t\right)\wedge c_4\wdw c_n\right].
        \end{align*}
        \item Let $k\geq 4$. Consider the element
        \begin{align*}
            &[\P^1\t\P^1, x\wedge (1-x)\wedge (x-(t-a_1)/(t-a_2))\wedge\\
            &(t-a_3)\wdw (t-a_k)\wedge c_{k+1}\wdw c_n].
        \end{align*}
    
    It follows from Lemma \ref{lemma:P1_genrated_by_Totaro} that the differential of this element is equal to $x+y$, where $x\in A$ and 
    $$y=(t-a_1)/(t-a_2)\wedge (1-(t-a_1)/(t-a_2))\wedge (t-a_3)\wdw (t-a_{k})\wedge c_{k+1}\wdw c_n.$$
    By inductive assumption this implies that the element
    $$[\P^1, (t-a_1)\wdw (t-a_k)\wedge c_{k+1}\wdw c_n]$$
    lies in $A$.
    \end{enumerate}

\end{proof}

It remains to show that the map $\T'$ is injective. We recall that in the previous section for any smooth complete curve over $\k$ we constructed the canonical map $\mc{H}_{\k(X)}\cl \EP{m+1}{X}\to \wG(\k,m)_{m-1}$. Define a morphism of complexes 
$$\mc{CD}\colon \wL(\k,m)\to \wG(\k,m)$$
as follows. The image of the element $[X, a]\in \wL(\k,m)_{m-1}$ is equal to $-\mc H_{\k(X)}(a)$. The image of the element $[\spec \k, a]$ is equal to $a$. It follows from the results of the previous section that $\mc{CD}$ is a morphism of complexes and $\mc{CD}\circ \T'=id$. This implies that the map $\mc T'$ is injective. So the map $\T'$ is an isomorphism.

\subsection{The proof of Theorem \ref{th:main_second_intro}}In the beginning of this section we have shown that the map $\T\cl \wG(\k,m)\to\wCH(\k,m)$ is well-defined. By the result of \cite{rudenko2021strong} and Lemma \ref{lemma:Galois_descent}, the groups $$H^i(\wCH(\k,m)), H^i(\wG(\k,m))$$ satisfy Galois descent. So we can assume that $\k$ is algebraically closed. In this case, it is enough to show that $\T$ is an isomorphism. In Section \ref{sec:W_isomorphsim} we have constructed the map $\wW\cl \wCH(\k,m)\to \wL(\k,m)$ and have shown that this map is an isomorphism. So it remains to prove that the map $\T'=\W\circ\T$ is an isomorphism. This was proven in the previous subsection.

Vasily Bolbachan\\
Skolkovo Institute of Science and Technology, 121205, Russia, Moscow, Bolshoy Boulevard, 30, p.1;\\ HSE University, HSE-Skoltech International Laboratory of Representation
Theory and Mathematical Physics, 119048, Russia, Moscow, Usacheva str., 6\\
\emph{E-mail:} \texttt{vbolbachan{\fontfamily{ptm}\selectfont @}gmail.com}
\end{document}